%

\documentclass{amsart}
\usepackage{amsmath}
\usepackage{amssymb}
\usepackage{mathabx}
\usepackage[all]{xy}
\usepackage{amsbsy}
\usepackage{graphicx}
\usepackage{appendix}
\usepackage{float}
\usepackage{subfigure}
\usepackage[numbers,sort&compress]{natbib}
\usepackage{graphicx}
\usepackage{amsthm}
\usepackage{geometry}
\usepackage{fancyhdr}
\usepackage{color} 
\usepackage{overpic}
\usepackage{mathabx}
\usepackage{tikz-cd}
\usepackage{enumerate}
\usepackage{colonequals}
\usepackage{microtype}
 \usepackage{hyperref}

\newtheorem{thm}{Theorem}[section]
\newtheorem{cor}[thm]{Corollary}
\newtheorem{prop}[thm]{Proposition}
\newtheorem{lem}[thm]{Lemma}

\theoremstyle{definition}
\newtheorem{defn}[thm]{Definition}
\newtheorem{exmp}[thm]{Example}
\newtheorem{conj}[thm]{Conjecture}
\newtheorem{cons}[thm]{Construction}
\newtheorem*{convs}{Conventions}
\newtheorem*{conv}{Convention}

\newtheorem*{org}{Organization}
\newtheorem*{ack}{Acknowledgement}

\theoremstyle{remark}
\newtheorem{rem}[thm]{Remark}

\numberwithin{equation}{section}

\newcommand{\beq}{\begin{equation*}\begin{aligned}}
\newcommand{\eeq}{\end{aligned}\end{equation*}}
\newcommand{\bpf}{\begin{proof}}
\newcommand{\epf}{\end{proof}}
\newcommand{\bthm}{\begin{thm}}
\newcommand{\ethm}{\end{thm}}
\newcommand{\bprop}{\begin{prop}}
\newcommand{\eprop}{\end{prop}}
\newcommand{\bcor}{\begin{cor}}
\newcommand{\ecor}{\end{cor}}
\newcommand{\blem}{\begin{lem}}
\newcommand{\elem}{\end{lem}}
\newcommand{\bcons}{\begin{cons}}
\newcommand{\econs}{\end{cons}}
\newcommand{\bdefn}{\begin{defn}}
\newcommand{\edefn}{\end{defn}}
\newcommand{\bexmp}{\begin{exmp}}
\newcommand{\eexmp}{\end{exmp}}
\newcommand{\brem}{\begin{rem}}
\newcommand{\erem}{\end{rem}}
\newcommand{\benu}{\begin{enumerate}[(1)]}
\newcommand{\eenu}{\end{enumerate}}

\newcommand{\bdia}{\begin{displaymath}\xymatrix}
\newcommand{\edia}{\end{displaymath}}

\newcommand{\hfk}{\widehat{HFK}}
\newcommand{\hf}{\widehat{HF}}

\newcommand{\shi}{\underline{\rm SHI}}

\newcommand{\wyk}{(-\widehat{Y},\widehat{K})}

\newcommand{\al}{\alpha}
\newcommand{\be}{\beta}
\newcommand{\ga}{\gamma}
\newcommand{\Ga}{\Gamma}




\newcommand{\ti}{\tilde}
\newcommand{\p}{\prime}
\newcommand{\pp}{{\prime\prime}}



\newcommand{\intg}{\mathbb{Z}}

\newcommand{\ra}{\rightarrow}

\newcommand{\xra}{\xrightarrow}



\begin{document}

\title{Instanton Floer homology, sutures, and Heegaard diagrams}


\author{Zhenkun Li}
\address{Department of Mathematics, Stanford University}
\curraddr{}
\email{zhenkun@stanford.edu}
\thanks{}

\author{Fan Ye}
\address{Department of Pure Mathematics and Mathematical Statistics, University of Cambridge}
\curraddr{}
\email{fy260@cam.ac.uk}
\thanks{}

\keywords{}
\date{}
\dedicatory{}
\maketitle
\begin{abstract}
This paper establishes a new technique that enables us to access some fundamental structural properties of instanton Floer homology. As an application, we establish, for the first time, a relation between the instanton Floer homology of a $3$-manifold or a null-homologous knot inside a $3$-manifold and the Heegaard diagram of that $3$-manifold or knot. We further use this relation to compute the instanton knot homology of some families of $(1,1)$-knots, including all torus knots in $S^3$, which were mostly unknown before. As a second application, we also study the relation between the instanton knot homology $KHI(Y,K)$ and the framed instanton Floer homology $I^\sharp(Y)$. In particular, we prove the inequality $\dim_\mathbb{C} I^\sharp(Y)\le \dim_\mathbb{C}KHI(Y,K)$ for all rationally null-homologous knots $K\subset Y$ and we constructed a new decomposition of the framed instanton Floer homology of Dehn surgeries along $K$ that corresponds to the decomposition along torsion spin${}^c$ decompositions in monopole and Heegaard Floer theory.

\end{abstract}

\tableofcontents

\section{Introduction}
The instanton homology of closed 3-manifolds and knots in 3-manifolds was introduced by Floer \cite{floer1988instanton,floer1990knot}, which became a powerful tool in the study of 3-dimensional topology. Some related constructions were made by Kronheimer and Mrowka \cite{kronheimer2010knots,kronheimer2011knot,kronheimer2011khovanov}, the first author \cite{li2019direct}, and Daemi and Scaduto \cite{daemi2019equivariant}. Apart from instanton Floer homology, there are three Floer homologies of closed 3-manifolds, knots, and balanced sutured manifolds: Heegaard Floer homology by Ozsv\'ath and Szab\'o \cite{ozsvath2004holomorphic,ozsvath2004holomorphicknot}, Rasmussen \cite{Rasmussen2003}, and Juh\'{a}sz \cite{juhasz2006holomorphic}, monopole Floer homology by Kronheimer and Mrowka \cite{kronheimer2007monopoles,kronheimer2010knots}, and embedded contact homology ($ECH$) by Hutchings \cite{Hutchings2010}, Colin, Ghiggini, Honda, and Hutchings \cite{Colin2010}. For closed 3-manifolds, all these three Floer homologies are isomorphic by work of Kutluhan, Lee, and Taubes \cite{kutluhan2010hf}, or Taubes \cite{taubes2010ech1} combined with Colin, Ghiggini, and Honda \cite{colin+}. However, instanton Floer homology remains isolated from the rest. The following conjecture is still open.
\begin{conj}[{\cite[Conjecture 7.24]{kronheimer2010knots}}]\label{conj: I=HF}
    For a balanced sutured manifold $(M,\ga)$, we have $$SHI(M,\ga)\cong SFH(M,\ga)\otimes \mathbb{C}.$$In particular, for a knot $K$ in a closed 3-manifold $Y$, there are isomorphisms
    $$I^{\sharp}(Y)\cong \widehat{HF}(Y)\otimes\mathbb{C}~{\rm and}~KHI(Y,K)\cong \widehat{HFK}(Y,K)\otimes \mathbb{C}.$$
Here $SHI$ is sutured instanton Floer homology \cite{kronheimer2010knots}, $SFH$ is sutured (Heegaard) Floer homology \cite{juhasz2006holomorphic}, $I^{\sharp}$ is framed instanton Floer homology \cite{kronheimer2011khovanov}, $\widehat{HF}$ is the hat version of Heegaard Floer homology \cite{ozsvath2004holomorphic}, $KHI$ is instanton knot homology \cite{kronheimer2010knots}, and $\widehat{HFK}$ is (Heegaard) knot Floer homology \cite{ozsvath2004holomorphicknot,Rasmussen2003}.
\end{conj}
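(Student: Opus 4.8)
Since Conjecture~\ref{conj: I=HF} is genuinely open, what follows is a strategy rather than a proof; the guiding idea is to exploit the paper's new bridge between $SHI$ and Heegaard diagrams so that \emph{both} sides of the conjectured isomorphism get computed by the same combinatorial recipe. First I would reduce the general sutured statement to the knot statement: every balanced sutured manifold has a nice sutured Heegaard diagram, and a sutured decomposition along a suitable product annulus or a surface dual to the sutures relates $SHI(M,\gamma)$ to $KHI$ of an auxiliary knot, while on the Heegaard side the parallel reduction for $SFH$ is Juh\'asz's decomposition formula. So it suffices to produce, from a doubly pointed Heegaard diagram $(\Sigma,\bs{\alpha},\bs{\beta},w,z)$ for a pair $(Y,K)$, a chain complex over $\comp$ whose homology is $KHI(Y,K)$ and whose differential has the same rank as the knot Floer differential, and then to upgrade that rank equality to an isomorphism.

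The knot case I would attack through a ``minus'' theory. Build $\khi(Y,K)$ as in the paper, cut the $(1,1)$-diagram --- or, in general, a bordered-type diagram --- into contact one- and two-handle attachments, and use the bypass exact triangle together with the gluing maps for these handles to express the differential of $\khi$ as a sum of ``local'' contributions indexed by $\bs{\alpha}\cap\bs{\beta}$. The key steps, in order, are: (i) set up the $\alpha$- and $\beta$-gradings on $\khi$ and show they refine to a filtration realizing the algebraic shape of $CFK^-$; (ii) prove a large-surgery formula identifying $I^\sharp$ of a large surgery with a graded piece of $\khi$, mirroring the Ozsv\'ath--Szab\'o/Rasmussen large-surgery theorem, so ranks can be transported between the two theories; (iii) bootstrap from base cases --- the unknot in $S^3$, where everything is $\comp$, and the $(1,1)$-knots and torus knots whose $KHI$ is already computed here and agrees with $\hfk$ --- by inducting along the exact triangles. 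One then promotes the rank equality to an isomorphism using compatibility with the full filtered structure and with the known naturality of both theories under crossing changes and band moves.

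For the closed $3$-manifold statement I would feed this knot invariant into a surgery formula: starting from the surgery exact triangle for $I^\sharp$ and the large-surgery identification above, assemble a mapping-cone description of $I^\sharp$ of all Dehn surgeries along $K\subset S^3$ purely in terms of $\khi(S^3,K)$, in exact parallel with the Heegaard Floer integer-surgery formula --- this is the formal counterpart of the decomposition of $I^\sharp$ of surgeries constructed in the paper. Since every closed oriented $3$-manifold arises from such surgeries and the formula is formally identical on the two sides once the input invariants agree, $I^\sharp(Y)\cong\hf(Y)\otimes\comp$ would follow by induction on surgery complexity, and the general $SHI(M,\gamma)\cong SFH(M,\gamma)\otimes\comp$ by running the sutured reduction backwards.

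The fundamental obstacle is that, unlike the monopole/Heegaard/ECH triangle, there is no analytic or geometric bridge between anti-self-dual Yang--Mills theory and Seiberg--Witten theory or holomorphic curves, so every step above must be carried out \emph{internally} in instanton theory and only then matched with its Heegaard analogue by a formal, structural comparison. Concretely, the hard part is not matching dimensions --- the inequality $\dim_\comp I^\sharp(Y)\le\dim_\comp KHI(Y,K)$ and the $(1,1)$-computations already give real rank control --- but matching the \emph{differentials}: showing that the bent differentials produced by the instanton handle decomposition count the same things over $\comp$ as Maslov-index-one holomorphic disks, and taming the scalar ambiguities inherent in $SHI$ so the comparison is canonical. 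Absent a miraculous identification of moduli spaces, this forces one to find an axiomatic characterization of $SFH$ --- a minimal list of gluing, exact-triangle, and base-case properties with a unique solution --- and to check that $SHI$ obeys it; establishing that such a characterization exists and survives passage to the torsion and rationally null-homologous settings is where I expect the real difficulty to lie.
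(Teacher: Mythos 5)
You are attempting to ``prove'' a statement that the paper itself records as an open conjecture (Kronheimer--Mrowka's Conjecture 7.24, reproduced here as Conjecture \ref{conj: I=HF}); the paper offers no proof, only indirect evidence such as Corollary \ref{cor_1: L space knots has sharp bounds}, Proposition \ref{prop: simple knots}, and the inequalities of Theorem \ref{thm_1: from heegaard diagram to SHI} and Theorem \ref{thm_1: KHI of 1,1 knots}. So there is nothing in the paper to compare your argument against, and, as you yourself concede in your first sentence, what you have written is a research program rather than a proof. That concession is the gap: every load-bearing step in your outline is itself an unsolved problem. A ``minus''-type filtration on $\khi$ realizing the algebraic shape of $CFK^-$, a large-surgery identification in instanton theory, a mapping-cone/integer-surgery formula for $I^{\sharp}$ built purely from knot data, and above all an axiomatic characterization of $SFH$ that $SHI$ can be checked to satisfy --- none of these is established here or elsewhere, and the last one is precisely where the conjecture lives. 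The paper's actual results give only one-sided rank control ($\dim_{\mathbb{C}}I^{\sharp}(-Y)\leq\dim_{\mathbb{C}}KHI(-Y,K)\leq\dim_{\mathbb{C}}SHI(-H,-\gamma)$, and $\dim_{\mathbb{C}}KHI\leq{\rm rk}_{\intg}\hfk$ for $(1,1)$-knots in lens spaces), with equality known only in special families where the graded Euler characteristic forces it; there is no mechanism in the paper, or in your sketch, for matching differentials or for promoting a rank equality to a natural isomorphism in general.

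Concretely, two steps in your reduction would already fail as stated. First, the reduction of the general sutured statement to the knot statement is not available: decomposing $(M,\gamma)$ along product annuli or surfaces dual to the suture does not express $SHI(M,\gamma)$ as $KHI$ of an auxiliary knot in any known way, and Juh\'asz's decomposition formula has no proven instanton counterpart beyond the graded statements of Theorem \ref{thm_2: grading on SHI}. Second, the induction ``along surgery complexity'' for closed manifolds requires that the two surgery formulas be not merely formally parallel but that the maps in the instanton mapping cone agree with the Heegaard Floer ones after identifying the inputs --- exactly the differential-matching problem you flag as the fundamental obstacle. Since that obstacle is left untouched, the proposal does not constitute a proof of Conjecture \ref{conj: I=HF}, and you should present it as a discussion of strategy (several pieces of which are pursued in the authors' follow-up work \cite{LY2021,LY2021surgery,BLY2020}) rather than as a proof environment.
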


Instanton Floer homology is closely related to the representations of the fundamental groups and many other topological properties of $3$-manifolds and knots. For example, it is the essential ingredient in proving the property P conjecture \cite{kronheimer2004witten} and the fact that Khovanov homology detects the unknot \cite{kronheimer2011khovanov}. Despite those remarkable applications, many fundamental structural properties of instanton Floer homology remain unknown. We propose a few of them here:
\benu
\item Instanton Floer homology serves as a topological invariant for $3$-manifolds and knots. On the other hand, Heegaard diagrams are one of the most important ways to describes $3$-manifolds and knots and are the basis for Heegaard Floer homology as well. So is it possible to relate instanton Floer homology with the Heegaard diagrams of $3$-manifolds and knots?
\item The monopole and Heegaard Floer homology of closed $3$-manifolds both decompose along spin${}^c$ structures. Non-torsion spin${}^c$ structures have their correspondence in instanton theory by looking at the simultaneous generalized eigenspace decompositions. However, the simultaneous generalized eigenspace decomposition of instanton Floer homology does not distinguish the torsions of the homology group of $3$-manifolds and hence is trivial for all rational homology spheres. So is it possible to obtain a decomposition of instanton Floer homology corresponding to the torsions? This new decomposition would also be the prerequisite for the fourth problem.
\item Can we understand the Euler characteristic of instanton Floer homology? Can we relate it to some other topological invariants of the $3$-manifold?
\item Can we relate the instanton Floer homology of knots and $3$-manifolds? In particular, can we derive a surgery formula for the instanton Floer homology of Dehn surgeries along knots?
\eenu

This paper develops a new technique that enables us to access those fundamental questions listed above. In this paper, we mainly deal with the first and the second and provide some answers to the fourth questions. First, towards answering the first question, we establish the following. 

\begin{thm}\label{thm_1: from heegaard diagram to SHI}
Suppose $Y$ is a rational homology sphere, and $K\subset Y$ is a knot. Suppose $(\Sigma,\al,\be,z,w)$ is a doubly-pointed Heegaard diagram of $(Y,K)$. Then there is a balanced sutured handlebody $(H,\gamma)$ constructed from $(\Sigma,\al,\be,z,w)$ (\textit{c.f.} Subsection \ref{subsec: sutured manifold with tangles}), so that the followings hold
$$\dim_{\mathbb{C}}I^{\sharp}(-Y)\leq \dim_{\mathbb{C}}KHI(-Y,K)\leq \dim_{\mathbb{C}}SHI(-H,-\gamma).$$
\end{thm}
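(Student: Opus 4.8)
I would prove this by building the sutured handlebody $(H,\gamma)$ directly from the combinatorial data of the doubly-pointed Heegaard diagram and then establishing each of the two inequalities separately. The plan is as follows.

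First, for the leftmost inequality $\dim_{\mathbb{C}}I^{\sharp}(-Y)\leq \dim_{\mathbb{C}}KHI(-Y,K)$: this should follow from general structural properties of instanton knot homology that are independent of the Heegaard diagram. The standard picture is that $KHI(-Y,K)=SHI(-Y(K))$ where $Y(K)$ is the knot complement with two meridional sutures, while $I^{\sharp}(-Y)$ is computed (up to the usual doubling/dimension factor) from $SHI$ of the complement of a basepoint, i.e. a $3$-ball removed from $Y$ with a single suture, or equivalently $SHI(-Y(U))$ for an unknot $U$ in a ball. I would realize the passage from the knot $K$ to the trivial situation by a sequence of sutured manifold decompositions (or bypass exact triangles) along the meridian, each of which is known to be non-increasing on the dimension of $SHI$; running enough bypass moves collapses the knot exterior's sutured instanton homology onto that computing $I^{\sharp}(-Y)$, yielding the inequality. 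This part I expect to be comparatively routine given the existing bypass and surgery-exact-triangle machinery for $SHI$.

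Second, and this is the heart of the matter, I would prove $\dim_{\mathbb{C}}KHI(-Y,K)\leq \dim_{\mathbb{C}}SHI(-H,-\gamma)$. Here $(H,\gamma)$ is the sutured handlebody built from $(\Sigma,\al,\be,z,w)$ as in Subsection \ref{subsec: sutured manifold with tangles}: concretely, one thickens $\Sigma$, attaches $2$-handles along only the $\al$-curves (say) to get a handlebody $H$, and uses the $\be$-curves together with the basepoint data to specify the suture $\gamma$ on $\partial H$ — so that gluing on the $\be$-handlebody along $\gamma$ recovers $(-Y,K)$ as a sutured manifold decomposition of $(-H,-\gamma)$. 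The key input is that $SHI$ behaves well under gluing sutured manifolds / attaching contact handles: gluing the $\be$-handlebody in is a sequence of contact $2$-handle attachments, and each such attachment induces a map on $SHI$ that, after the last one, lands in (or surjects onto, or has image of controlled dimension comparing to) $KHI(-Y,K)$. The cleanest formulation is that there is a map from $SHI(-H,-\gamma)$ to $KHI(-Y,K)$ coming from this gluing which is surjective — equivalently that $KHI(-Y,K)$ is a direct summand, or at least a quotient, of $SHI(-H,-\gamma)$ as $\mathbb{C}$-vector spaces — which immediately gives the dimension inequality.

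The main obstacle I anticipate is controlling the gluing map precisely enough to get \emph{surjectivity} (or the dimension bound) rather than just \emph{some} map. Contact handle attachment maps in sutured instanton theory are not in general isomorphisms, and one has to be careful that the composite of all the handle maps filling in the $\be$-side does not kill too much or, conversely, that $SHI(-H,-\gamma)$ is genuinely "larger." I would handle this by arranging the suture $\gamma$ so that the handlebody side is as simple as possible — ideally so that $(-H,-\gamma)$ admits a description making $SHI(-H,-\gamma)$ computable as a tensor/sum over the intersection points $\al\cap\be$ of the diagram (mirroring how $\widehat{HFK}$ is generated by intersection points), and so that the generators surviving to $KHI(-Y,K)$ are cut out by the differential/handle maps. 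An alternative route, if the direct surjectivity argument is delicate, is to use the bypass exact triangles relating the various suture choices on $\partial H$ and an inductive count: each bypass triangle gives $\dim SHI$ of one configuration $\leq$ sum of the other two, and one organizes the induction (on the number of $\al$- or $\be$-curves, or on a complexity of $\gamma$) so that the base case is exactly $KHI(-Y,K)$. Either way, the crux is a careful bookkeeping of how contact gluing maps interact with the sutured decomposition coming from the Heegaard splitting.
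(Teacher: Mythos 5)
Your construction of $(H,\gamma)$ and your identification of the second inequality as the crux are broadly in line with the paper, but both of the mechanisms you propose have genuine gaps, and the missing ingredient is the same in each case. In the paper, both inequalities are instances of a single statement, Proposition \ref{prop_3: dimension inequality for tangles} (Proposition \ref{prop: tangle inequality} in the introduction): one views $(H,\gamma)$ as the complement of the vertical tangle $T$ of co-cores of the $\beta$-handles inside $(Y(1),\delta)$, and $(Y(K),\ga_\mu)$ as the complement of the single component coming from $\be_g$, and one proves that $\dim_{\mathbb{C}}\shi$ of the complement is monotone under adding tangle components that are rationally null-homologous; the rational homology sphere hypothesis is exactly what makes every component of $T$ rationally null-homologous, and your proposal never uses that hypothesis anywhere --- a warning sign. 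Your route to the second inequality, namely that the composite of contact $2$-handle gluing maps $SHI(-H,-\gamma)\to KHI(-Y,K)$ is surjective, is precisely the unproved point: contact handle attachment maps are not surjective in general, and the paper never establishes surjectivity with the meridional suture in place. Instead it replaces the suture on the tangle complement by a family of twisted sutures $\Ga_n$, fits the filled-in complement into a surgery exact triangle (Lemma \ref{lem_3: surgery triangle}), and proves the connecting map $G_n$ vanishes for $n$ large (Lemma \ref{lem_3: G_n is zero}); that vanishing is itself proved by graded bypass exact triangles (Lemma \ref{lem_3: graded bypass triangle}) with the grading induced by a surface bounded by multiples of the tangle component --- this is where rational null-homology enters --- together with an auxiliary arc and product $1$-handle to arrange the surface correctly. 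None of these ideas appear in your outline. Your fallback, an induction using only the sub-additivity $\dim\le$ (sum of the other two terms) from bypass triangles, cannot work: it produces upper bounds, whereas here you need a lower bound on $\dim_{\mathbb{C}}SHI(-H,-\gamma)$, equivalently injectivity or surjectivity of specific maps, which only the graded vanishing argument supplies. Also, $\shi$ has no intersection-point generators or differentials, so ``generators cut out by the differential/handle maps'' is not an available mechanism.

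The first inequality is not routine either, and your stated mechanism is incorrect: there is no result asserting that a bypass attachment or a sutured manifold decomposition ``along the meridian'' is non-increasing on $\dim_{\mathbb{C}}\shi$, and no sutured decomposition carries $(Y(K),\ga_\mu)$ to $(Y(1),\delta)$. In the paper this inequality is the case $T'=\emptyset$, $T=\{\text{co-core of the }\be_g\text{-handle}\}$ of the same Proposition \ref{prop_3: dimension inequality for tangles} (stated separately as Proposition \ref{prop: direct result}, generalizing Wang's result from null-homologous to rationally null-homologous knots), and its proof is again the $G_n=0$ surgery-triangle argument, not a monotonicity of individual moves. So while your architecture (the $\alpha$-handlebody with suture built from the $\beta$-curves, plus exact-triangle bookkeeping, with Theorem \ref{thm_2: isomorphism between SHI with and without tangles} implicitly in the background) points in the right direction, the proposal is missing the central technical idea that makes both inequalities hold.
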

\brem
For most arguments in this paper, there are minus signs before the manifold and the suture, which means that we take the reverse orientation. This is because the proofs are based on contact gluing maps for sutured instanton homology (\textit{c.f.} Subsection \ref{subsec: general bypass}).
\erem
The proof of Theorem \ref{thm_1: from heegaard diagram to SHI} makes use of rationally null-homologous tangles in balanced sutured manifolds. In particular, we proved the following proposition.

\bprop\label{prop: tangle inequality}
Suppose $(M,\ga)$ is a balanced sutured manifold and $T$ is a connected vertical tangle in $(M,\ga)$ (\textit{c.f.} Definition \ref{defn: tangle}). Suppose $M_T=M\backslash N(T)$ and $\ga_T=\ga\cup m_T$, where $m_T$ is the meridian of $T$. If $[T]=0\in H_1(M,\partial M;\mathbb{Q})$, then we have\[\dim_\mathbb{C}SHI(-M,-\ga)\le \dim_\mathbb{C}SHI(-M_T,-\ga_T).\]

\eprop
By Proposition \ref{prop: tangle inequality}, we also prove a generalization of the first inequality in Theorem \ref{thm_1: from heegaard diagram to SHI}, which generalizes the result for null-homologous knots by Wang \cite[Proposition 1.18]{wang2020cosmetic}.
\bprop\label{prop: direct result}
Suppose $Y$ is a closed 3-manifold and $K\subset Y$ is a knot such that
$$[K]=0\in H_1(Y;\mathbb{Q}).$$
Then we have
$$\dim_{\mathbb{C}}I^{\sharp}(-Y)\leq \dim_{\mathbb{C}}KHI(-Y,K).$$
\eprop
In Theorem \ref{thm_1: from heegaard diagram to SHI}, we bound the dimensions of $I^\sharp(-Y)$ and $KHI(-Y,K)$ by the dimension of sutured instanton homology $SHI(-H,-\ga)$, which is still difficult to compute in general.  However, in the case where $H$ is a handlebody, an upper bound of $\dim_\mathbb{C}SHI(H,\ga)$ can be calculated via bypass exact triangles (for bypass exact triangle, \textit{c.f.} {\cite[Theorem 1.20]{baldwin2018khovanov}}, and for the algorithm to obtain an upper bound, \textit{c.f.} \cite[Section 4]{li2019decomposition}). In particular, we apply this theorem to $(1,1)$-knots in lens spaces, whose Heegaard diagrams are described in Proposition \ref{oneonepara}, and obtain the following theorem.

\begin{thm}\label{thm_1: KHI of 1,1 knots}
Suppose $Y$ is a lens space, and $K\subset Y$ is a $(1,1)$-knot. Then we have
$$\dim_{\mathbb{C}}KHI(Y,K)\leq {\rm rk}_{\intg}\widehat{HFK}(Y,K).$$
\end{thm}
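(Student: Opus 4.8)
The plan is to combine Theorem~\ref{thm_1: from heegaard diagram to SHI} with the upper-bound algorithm for sutured instanton homology of handlebodies via bypass exact triangles, and to check that for $(1,1)$-knots the combinatorics of that algorithm is governed by the same doubly-pointed genus-one Heegaard diagram that computes $\widehat{HFK}$. First I would invoke the description of $(1,1)$-knots in lens spaces from Proposition~\ref{oneonepara}: a $(1,1)$-knot $K\subset Y$ admits a doubly-pointed Heegaard diagram $(\Sigma,\al,\be,z,w)$ with $\Sigma$ a torus and $\al$, $\be$ single curves, and I would feed this diagram into the construction of Subsection~\ref{subsec: sutured manifold with tangles} to produce the balanced sutured handlebody $(H,\ga)$ appearing in Theorem~\ref{thm_1: from heegaard diagram to SHI}, so that
\[
\dim_{\comp} KHI(Y,K)=\dim_{\comp}KHI(-(-Y),K)\leq \dim_{\comp} SHI(-H,-\ga).
\]
(Here one must be a little careful with orientation reversals, but since $-Y$ is again a lens space and $-K\subset -Y$ is again a $(1,1)$-knot with the mirror diagram, applying the theorem to $(-Y,K)$ bounds $\dim_\comp KHI(Y,K)$ by $\dim_\comp SHI(H',\ga')$ for the corresponding sutured handlebody, and $\widehat{HFK}(Y,K)\cong\widehat{HFK}(-Y,K)$ up to grading reversal so the target rank is unchanged.)

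Next I would carry out the bypass-triangle estimate for $SHI(-H,-\ga)$. The sutured handlebody $(H,\ga)$ coming from a genus-one doubly-pointed diagram has a suture $\ga$ on $\partial H$ that is built from the $\al$- and $\be$-curves together with small circles around $z$ and $w$; the idea, following \cite[Section~4]{li2019decomposition} and the bypass exact triangle \cite[Theorem~1.20]{baldwin2018khovanov}, is to resolve $SHI(-H,-\ga)$ by repeatedly applying bypass exact triangles that change the suture near the intersection points $\al\cap\be$. Each such triangle relates the sutured homology of three sutures differing by a bypass move; choosing the bypasses to successively simplify the suture down to ones carried by a disk (for which $SHI$ has dimension $1$) or to ones that vanish, one obtains an upper bound on $\dim_\comp SHI(-H,-\ga)$ by a count of leaves in the resulting resolution tree. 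The key point is that this count is exactly $\#(\al\cap\be)=\mathrm{rk}_\intg\,\widehat{HFK}(Y,K)$: for a $(1,1)$-knot the knot Floer homology has no differential in the relevant sense, so its rank equals the number of generators of the genus-one diagram, and the bypass resolution can be set up so that its leaves are indexed by precisely these intersection points.

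The main obstacle I expect is the matching step: showing that the bypass exact triangle algorithm applied to $(-H,-\ga)$ produces a resolution with exactly $\#(\al\cap\be)$ surviving terms, each contributing dimension at most $1$, rather than some larger number. This requires choosing the sequence of bypass arcs compatibly with the Heegaard diagram — essentially reading off from the positions of the intersection points on the torus which bypasses trivialize which sutures — and then an inductive argument on the number of intersection points, where at each stage one of the three terms in the bypass triangle is shown to have strictly smaller complexity. A secondary subtlety is ensuring that the base cases of the induction (small numbers of intersection points, or degenerate sutures) genuinely give $SHI$ of dimension $0$ or $1$, which should follow from the known computation of $SHI$ for product sutured manifolds and for the simplest sutured handlebodies. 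Once these combinatorial bookkeeping steps are in place, the chain of inequalities
\[
\dim_\comp KHI(Y,K)\le \dim_\comp SHI(-H,-\ga)\le \#(\al\cap\be)=\mathrm{rk}_\intg\,\widehat{HFK}(Y,K)
\]
completes the proof.
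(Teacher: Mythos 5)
Your reduction steps match the paper: you invoke Proposition \ref{oneonepara} and Theorem \ref{thm_1: from heegaard diagram to SHI} to bound $\dim_\comp KHI$ by $\dim_\comp\shi(-H,-\ga)$ for the sutured handlebody built from the $(1,1)$-diagram, you use Proposition \ref{prop: rk for HFK} to identify ${\rm rk}_\intg\hfk$ with the intersection number $p$, and you handle the orientation reversal via the mirror $(1,1)$-knot exactly as the paper does (Proposition \ref{mirror}). But the entire content of the theorem beyond these formal steps is the estimate $\dim_\comp\shi(-H,-\ga)\le p$ (the paper's Theorem \ref{thm: less1}), and this is precisely what you leave as an ``obstacle I expect'' rather than prove. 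Your sketch of it is not yet an argument, and as phrased it would not close: saying that ``at each stage one of the three terms in the bypass triangle has strictly smaller complexity'' only reproduces the generic resolution-tree estimate, which is the exponential bound $2^{n-g}$ of Corollary \ref{cor_3: combinatorial bound}, not a bound linear in $p$. To get $p$ you need the two \emph{new} terms of each triangle to again be $(1,1)$-sutured handlebodies whose intersection numbers $p'$, $p''$ with $\al_1$ satisfy $p'+p''=p$, so that induction gives $\dim\le p'+p''=p$; additivity of the complexities, not mere strict decrease, is the crucial point.

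Concretely, the missing idea is the choice of bypass arc and the verification of closure of the class of $(1,1)$-sutured handlebodies under the resulting suture changes. The paper attaches the bypass near the basepoint $z$, across the strand $\delta_1$ of $\ga_1$ closest to $z$, the adjacent strand $\delta_2$, and the intervening strand of $\ga_3$, and must split into two cases according to the relative orientation of $\delta_1$ and $\delta_2$: an anti-wave bypass when they are parallel (Case 1) and a wave bypass when they are anti-parallel (Case 2). In each case one has to describe the two new sutures explicitly (via the Dehn-twist/mystery-move description of Proposition \ref{prop: honda}), isotope them to reduced position, check that they are again of the form $\ga_1'\cup\ga_2\cup\ga_3'$ with $\ga_3'$ a band sum of $\ga_1'$ and $\ga_2$ (i.e.\ genuinely $(1,1)$-sutures), and compute $|\ga_1'\cap\al_1|+|\ga_1''\cap\al_1|=|\ga_1\cap\al_1|$. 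A further subtlety you did not anticipate: the induction must be run over all $(1,1)$-diagrams in which $\be$ has one component, including those where $\be$ is null-homologous and hence not a Heegaard diagram of any knot, since such sutures arise as terms in the triangles; the base case $p=1$ is then handled by Theorem \ref{product}. Without these ingredients the chain of inequalities you write at the end is not established.
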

Prior to the current paper, there are two main approaches to estimate the dimension of $KHI$. The first is via the spectral sequence from Khovanov homology to instanton knot homology established by Kronheimer and Mrowka \cite{kronheimer2011khovanov}. This bound is sharp for all alternating knots and many other knots. However, Khovanov homology is only defined for knots in $S^3$, so we cannot have any information for knots in other $3$-manifolds. The second way is to study a set of explicit generators of the instanton knot homology and its variances for some special families of knots, and the number of generators bounds the dimension of homology. This idea has been exploited by Hedden, Herald, and Kirk \cite{Hedden2014} and Daemi and Scaduto \cite{daemi2019equivariant}. Our Theorem \ref{thm_1: from heegaard diagram to SHI} and Theorem \ref{thm_1: KHI of 1,1 knots} then offers a totally new way to obtain an upper bound of $\dim_\mathbb{C}KHI$, and the following corollary indicates that this bound is sharp for many examples.
\begin{cor}\label{cor_1: L space knots has sharp bounds}
Suppose $K\subset S^3$ is a $(1,1)$-knot that is also a Heegaard Floer L-space knot. Then
$$\dim_{\mathbb{C}}KHI(S^3,K)={\rm rk}_{\intg}\widehat{HFK}(S^3,K).$$
\end{cor}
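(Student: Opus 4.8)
The plan is to prove the two inequalities $\dim_{\mathbb{C}}KHI(S^3,K)\leq \mathrm{rk}_{\mathbb{Z}}\widehat{HFK}(S^3,K)$ and $\dim_{\mathbb{C}}KHI(S^3,K)\geq \mathrm{rk}_{\mathbb{Z}}\widehat{HFK}(S^3,K)$ separately. The first is essentially free: since $S^3=L(1,0)$ is a lens space and $K$ is a $(1,1)$-knot, Theorem \ref{thm_1: KHI of 1,1 knots} applies verbatim and gives exactly this bound. So the real content is the reverse inequality, for which I would combine the rigidity of the Heegaard side with a computation of the Euler characteristic on the instanton side.

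For the lower bound I would first recall Ozsv\'ath--Szab\'o's structure theorem for L-space knots: if $K$ admits a positive L-space surgery then its symmetrized Alexander polynomial $\Delta_K(t)=\sum_j a_j t^j$ has every coefficient in $\{-1,0,1\}$, the nonzero coefficients alternate in sign, and $\widehat{HFK}(S^3,K,j)\cong\mathbb{Z}$ exactly when $a_j\neq 0$ and is $0$ otherwise. In particular $\mathrm{rk}_{\mathbb{Z}}\widehat{HFK}(S^3,K)=\sum_j|a_j|$, the number of nonzero terms of $\Delta_K$. On the instanton side I would use that $KHI(S^3,K)$ carries an Alexander $\mathbb{Z}$-grading compatible with the $\mathbb{Z}/2$-grading of instanton Floer homology, and that its (bi)graded Euler characteristic is the Alexander polynomial up to an overall unit $\pm t^n$ (Kronheimer--Mrowka, Lim). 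Comparing grading by grading, $\dim_{\mathbb{C}}KHI(S^3,K,j)\geq|\chi(KHI(S^3,K,j))|=|a_j|$ once the gradings are aligned, and summing over all Alexander gradings gives $\dim_{\mathbb{C}}KHI(S^3,K)=\sum_j\dim_{\mathbb{C}}KHI(S^3,K,j)\geq\sum_j|a_j|=\mathrm{rk}_{\mathbb{Z}}\widehat{HFK}(S^3,K)$. Together with the upper bound this forces equality.

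Since every substantive ingredient (Theorem \ref{thm_1: KHI of 1,1 knots}, the L-space knot structure theorem, the instanton Euler characteristic formula) is quoted as a black box, the only genuine work is bookkeeping, and this is where I expect the main subtlety to lie: one must line up the Alexander grading, the $\mathbb{Z}/2$-grading, and the orientation conventions so that the instanton Euler characteristic is literally $\pm t^n\Delta_K(t)$, and then use the alternating-sign pattern of an L-space knot's Alexander polynomial to conclude that no cancellation can occur within a single Alexander grading of $KHI$. That last observation is precisely what makes the Euler-characteristic lower bound tight; without the L-space hypothesis the inequality $\sum_j|\chi(KHI(S^3,K,j))|\leq\dim_{\mathbb{C}}KHI(S^3,K)$ would no longer match $\mathrm{rk}_{\mathbb{Z}}\widehat{HFK}(S^3,K)$, so the hypothesis is used exactly here.
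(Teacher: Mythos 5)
Your proposal is correct and follows essentially the same route as the paper's proof: the upper bound comes from Theorem \ref{thm_1: KHI of 1,1 knots} applied to $S^3=L(1,0)$, the lower bound from the Kronheimer--Mrowka/Lim computation that the Euler characteristic of the $i$-th Alexander grading of $KHI(S^3,K)$ is $\pm c_i$, and the Ozsv\'ath--Szab\'o L-space knot theorem identifies ${\rm rk}_{\intg}\widehat{HFK}(S^3,K)$ with $\sum_i|c_i|$ so the bounds meet. One small clarification: the L-space hypothesis is not what rules out ``cancellation within a single Alexander grading of $KHI$'' (the inequality $\dim_{\mathbb{C}}KHI(S^3,K,i)\geq|\chi(KHI(S^3,K,i))|$ holds for every knot); as you in fact state earlier, it is used purely on the Heegaard Floer side to force ${\rm rk}_{\intg}\widehat{HFK}(S^3,K)=\sum_i|c_i|$.
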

\begin{proof}Suppose the Alexander polynomial of $K$ is $\Delta_{K}(t)=\sum_{i\in\intg}c_it^i.$ From Ozsv\'ath and Szab\'o \cite[Theorem 1.2]{Ozsvath2005}, we have
$${\rm rk}_{\mathbb{Z}}\widehat{HFK}(S^3,K)=\sum_{i\in\intg}|c_i|.$$
In instanton theory, the main result of Kronheimer and Mrowka \cite{kronheimer2010instanton}, or Lim \cite{Lim2009}, states that the Euler characteristic of the $i$-th grading of $KHI(S^3,K)$ equals $\pm c_i$. As a result, we have
$$\dim_{\mathbb{C}}KHI(S^3,K)\geq \sum_{i\in\intg}|c_i|.$$
Hence Theorem \ref{thm_1: KHI of 1,1 knots} applies and we conclude the desired equality.
\end{proof}
Corollary \ref{cor_1: L space knots has sharp bounds} would provide many examples whose related spectral sequences from Khovanov homology to instanton knot homology have some nontrivial intermediate pages. In particular, for torus knots, previously there were only partial computations of $KHI$ from the related specatral sequences (\textit{c.f.} \cite{Kronheimer2014,Lobb2020}), while Corollary \ref{cor_1: L space knots has sharp bounds} applies to torus knots directly since torus knots admit lens spaces surgeries (\textit{c.f.} Moser \cite{Moser1971}).
\bcor\label{cor_1: torus knots}
For a torus knot $K=T_{(p,q)}$, suppose its Alexander polynomial is $$\Delta_{K}(t)=t^{-\frac{(p-1)(q-1)}{2}}\frac{(t^{pq}-1)(t-1)}{(t^p-1)(t^q-1)}=\sum_{i=-\frac{(p-1)(q-1)}{2}}^{\frac{(p-1)(q-1)}{2}}c_it^i.$$
Then we have
$$\dim_{\mathbb{C}}KHI(S^3,K,i)=|c_i|,$$
where $i$ denotes the Alexander grading of $KHI(S^3,K)$.
\ecor

In general lens spaces, we may obtain lower bounds from graded Euler characteristics of $SHI$ and provide more examples of Conjecture \ref{conj: I=HF}. For example, the Heegaard Floer homology of constrained knots in lens spaces, introduced by the second author \cite{Ye2020}, is determined by their graded Euler characteristics. However, in the manifold other than $S^3$, it is not known if graded Euler characteristics of $\widehat{HFK}$ and $KHI$ are equal. We will deal with the graded Euler characteristics in a forthcoming paper \cite{LY2021}.

A \textbf{(Heegaard) Floer simple knot} is a knot $K$ in a rational homology sphere $Y$ such that \begin{equation}\label{eq: Floer simple}
    {\rm rk}_\mathbb{Z}\widehat{HFK}(Y,K)={\rm rk}_\mathbb{Z}\widehat{HF}(Y)=|H_1(Y;\mathbb{Z})|,
\end{equation}where $|H_1(Y;\mathbb{Z})|$ is the order of the first homology group of $Y$. A \textbf{(Heegaard Floer) L-space} is a rational homology sphere $Y$ satisfies the latter equality in (\ref{eq: Floer simple}). Examples of Floer simple knots include simple knots in lens spaces (\textit{c.f.} Definition \ref{defn: reduced}; see also \cite[Section 2.1]{Rasmussen2007}). Rasmussen and Rasmussen \cite{Rasmussen2017} proved that for Floer simple knots, there is an interval in $\mathbb{Q}\cup \{\infty\}$ so that a Dehn surgery gives an L-space if and only if the surgery slope is in the interval. In the interior of the interval, the dual knots are also Floer simple knots.

A similar result may hold for instanton Floer homology. A knot $K$ in a rational homology sphere $Y$ is called a \textbf{instanton Floer simple knot} if
\begin{equation}\label{eq: instanton Floer simple}\dim_\mathbb{C}KHI(Y,K)=\dim_\mathbb{C} I^\sharp (Y)=|H_1(Y;\mathbb{Z})|.
\end{equation}
An \textbf{instanton L-space} is a rational homology sphere $Y$ satisfies the latter equality in (\ref{eq: instanton Floer simple}).
\bprop\label{prop: simple knots}
Simple knots in lens spaces are instanton Floer simple knots.
\eprop

\bpf
Suppose $K$ is a simple knot in $Y=L(p,q)$. Combining Theorem \ref{thm_1: from heegaard diagram to SHI}, Theorem \ref{thm_1: KHI of 1,1 knots} and a direct calculation of knot Floer homology, we have
\[\dim_\mathbb{C} I^\sharp(Y)\leq \dim _\mathbb{C}KHI(Y,K)\leq {\rm rk}_\mathbb{Z}\widehat{HFK}(Y,K)=p.\]
By Scaduto \cite[Corollary 1.4]{scaduto2015instanton}, we have
\[\dim_\mathbb{C} I^\sharp(Y)\ge |H_1(Y)|=p.\]Hence we conclude the desired equality.
\epf

Inspired by the result about Floer simple knots by Rasmussen and Rasmussen \cite{Rasmussen2017}, we prove the following theorem for simple knots in lens spaces.

\bthm\label{surgery}
Suppose $K$ is a simple knot in a lens space $Y$. Fixing a framing on $\partial Y(K)$ by picking an arbitrary longitude of $K$. Then there exists an integer $N\ge 0$ so that for any $r\in\mathbb{Q}$ with $|r|\ge N$, the manifold obtained by a surgery of slope $r$ along $K$ is an instanton L-space, and the dual knot is also an instanton Floer simple knot.
\ethm
\brem
In general, framed instanton Floer homology is very difficult to compute. Only some special families of 3-manifolds were studied (\textit{c.f.} \cite{Scaduto2018,lidman2020framed,baldwin2020concordance,alfieri2020framed}). Theorem \ref{surgery} provides many new examples whose framed instanton Floer homology can be computed.
\erem

Towards answering the second question regarding the decomposition of instanton Floer homology, we establish the following.
\bthm\label{thm: torsion spin c decomposition}
Suppose $\widehat{Y}$ is a closed 3-manifold, and $\widehat{K}\subset \widehat{Y}$ is a rationally null-homologous knot. Let $\widehat{Y}(\widehat{K})=\widehat{Y}\backslash {\rm int} (N(\widehat{K}))$ be the knot complement and let $S$ be a Seifert surface of $\widehat{K}$. Suppose futher that $\lambda=\partial S\subset\partial \widehat{Y}(\widehat{K})$ is connected and $|\hat{\mu}\cdot \lambda|=q$, where $\hat{\mu}$ is the meridian of $\widehat{K}$ on $\partial \widehat{Y}(\widehat{K})$ and the dot $\cdot$ denotes the algebraic intersection number. Then there is a decomposition associated to $\widehat{K}$ up to isomorphism:
$$I^{\sharp}(\widehat{Y})\cong \bigoplus_{i=0}^{q-1}I^{\sharp}(\widehat{Y},i).$$
\ethm

When $H_1(\widehat{Y})=\intg_q$ and $\widehat{K}$ represents a generator of $H_1(\widehat{Y})$, we can regard the decomposition in Theorem \ref{thm: torsion spin c decomposition} as an analog of the torsion spin$^c$ decompositions$$\widetilde{HM}(\widehat{Y})=\bigoplus_{\mathfrak{s}\in {\rm Spin}^{c}(\hat{Y})} \widetilde{HM}(\widehat{Y},\mathfrak{s})~{\rm and}~\widehat{HF}(\widehat{Y})=\bigoplus_{\mathfrak{s}\in {\rm Spin}^{c}(\hat{Y})}\widehat{HF}(\widehat{Y},\mathfrak{s}).$$Here $\widetilde{HM}$ is the tilde version of monopole Floer homology \cite{Bloom2009}.



To provide some evidence, we will prove the following result in a forthcoming paper \cite{LY2021}.
\bprop
Under the hypothesis and the statement of Theorem \ref{thm: torsion spin c decomposition}, there is a well-defined $\intg_2$ grading on $I^{\sharp}(\widehat{Y},i)$. Suppose $\mu\subset\partial \widehat{Y}(\widehat{K})$ is a simple closed curve so that $|\mu\cdot \lambda|=1$ and suppose $Y$ is the manifold obtained by Dehn filling along $\mu$. For any integer $i\in[0,q-1]$, we have
$$\chi(I^{\sharp}(\widehat{Y},i))=\chi(I^{\sharp}(Y)).$$In particular, if $\widehat{Y}$ is an instanton L-space and $Y=S^3$, then for any integer $i\in[0,q-1]$, we have
$$I^{\sharp}(\widehat{Y},i)\cong\mathbb{C}.$$
\eprop

\begin{rem}\label{rem: defect}
The defect of the decomposition of $I^{\sharp}(\widehat{Y})$ in Theorem \ref{thm: torsion spin c decomposition} is that currently, we do not know if it is independent of the choice of $\widehat{K}$ inside $\widehat{Y}$.
\end{rem}
For integral surgeries on a null-homologous knot, we obtain more results inspired by the large surgery formula in Heegaard Floer homology (\textit{c.f.} \cite[Theorem 4.1]{ozsvath2004holomorphicknot}). For a null-homologous knot $K$ in a closed 3-manifold $Y$, let the basis of $\partial Y(K)$ be formed by the meridian of $K$ and the boundary of a Seifert surface.
\bprop\label{prop: all but 2g-1 summands are trivial}
Suppose $Y$ is a closed 3-manifold and $K\subset Y$ is a null-homologous knot. Suppose $n$ is an integer satisfying $n\geq 2g(K)+1$ and suppose $\widehat{Y}_n$ is obtained from $Y$ by performing the $n$ surgery along $K$. For any integer $i\in[0,n-2g(K)-1]\cup\{n-1\}$, we have
$$I^{\sharp}(\widehat{Y}_n,i)\cong I^{\sharp}(Y).$$For any integer $i\in[0,n-1]$, we have
$$I^{\sharp}(\widehat{Y}_n,i)\cong I^{\sharp}(\widehat{Y}_{n+1},i+1).$$Thus, we have $$\dim_\mathbb{C}I^{\sharp}(\widehat{Y}_{n+1})-\dim I^{\sharp}(\widehat{Y}_{n})=\dim_\mathbb{C}I^\sharp(Y).$$
\eprop
The proofs of Theorem \ref{thm: torsion spin c decomposition} and Proposition \ref{prop: all but 2g-1 summands are trivial} make use of $SHI(\widehat{Y}(\widehat{K}),\ga)$ for some special suture $\ga\subset \partial \widehat{Y}(\widehat{K})$. In \cite[Section 3]{li2019direct}, the first author constructed a grading, \textit{i.e.}, a decomposition of $SHI$ associated to a properly embedded surface with some admissible conditions. The Seifert surface plays role of this properly embedded surface, which decomposes $SHI(\widehat{Y}(\widehat{K}),\ga)$ for any suture $\ga$. Then we are able to identify some direct summands of the decomposition with $I^\sharp (\widehat{Y})$. Explicitly, we can construct the decomposition by the following proposition.
\bprop\label{prop: I sharp and SHI}
Suppose $Y$ is a closed 3-manifold and $K\subset Y$ is a null-homologous knot. Suppose $\widehat{Y}$ is obtained from $Y$ by performing the $q/p$-surgery along $K$ with $q>0$. Then there is a set $\mathcal{G}$ of sutures on the boundary of the knot complement $Y(K)$, determined by the integer $q$ and the genus $g(K)$ of $K$ so that the followings hold.
\benu
\item For any suture $\ga\in \mathcal{G}$, sutured instanton Floer homology carries a $\intg$ grading, \textit{i.e.},
$$SHI(-Y(K),\ga)=\bigoplus_{i\in\intg}SHI(-Y(K),\ga,i).$$
\item For any suture $\ga\in \mathcal{G}$, there is an integer $i_\ga$ so that there is an isomorphism
$$f_{K,\ga}: I^{\sharp}(-\widehat{Y})\xra{\cong}\bigoplus_{j=i_\ga}^{i_\ga+q-1}SHI(-Y(K),\ga,j).$$
\item For any two sutures $\ga_1,\ga_2\in \mathcal{G}$, there is an isomorphism with respect to gradings (\textit{i.e.} sending the grading $i_{\ga_1}$ to $i_{\ga_2}$ and so on) $$g_{K,\ga_1,\ga_2}: \bigoplus_{j=i_{\ga_1}}^{i_{\ga_1}+q-1}SHI(-Y(K),\ga_1,j)\xra{\cong}\bigoplus_{j=i_{\ga_2}}^{i_{\ga_2}+q-1}SHI(-Y(K),\ga_2,j).$$
\item For any two sutures $\ga_1,\ga_2\in \mathcal{G}$, there is a commutative diagram $$\xymatrix@R=6ex{
\bigoplus_{j=i_{\ga_1}}^{i_{\ga_1}+q-1}SHI(-Y(K),\ga_1,j)\ar[rr]^{g_{K,\ga_1,\ga_2}}&&\bigoplus_{j=i_{\ga_2}}^{i_{\ga_2}+q-1}SHI(-Y(K),\ga_2,j)\\
&I^\sharp(-\widehat{Y})\ar[ul]^{f_{K,\ga_1}}\ar[ur]_{f_{K,\ga_2}}&
}$$
\eenu
\eprop
\brem
Roughly speaking, in Proposition \ref{prop: I sharp and SHI}, the suture $\ga$ in the set $\mathcal{G}$ consists of two parallel copies of simple closed curves with some large slope. The isomorphism $f_{K},\ga$ is constructed from cobordism maps associated to surgeries on curves in the interior of $Y(K)$. The isomorphism $g_{K,\ga_1,\ga_2}$ is constructed from bypass maps. See the end of Subsection \ref{subsec: basic setup for knots} for a sketch of the proof.
\erem
\begin{rem}
Theorem \ref{thm: torsion spin c decomposition} is straightforward from term (2) of Proposition \ref{prop: I sharp and SHI}. However, there are many different choices of the grading $i_{\ga}$. If we regard the decomposition in Theorem \ref{thm: torsion spin c decomposition} as a $\intg_q$-grading on $I^{\sharp}(\widehat{Y})$, then different choices of $i_{\ga}$ lead to shifts of the $\intg_{q}$-grading. So we get a relative $\intg_q$-grading but it is straightforward to upgrade it to a canonical $\intg_q$-grading by fixing the choice of $i_{\ga}$ in term (2) of Proposition \ref{prop: I sharp and SHI}. In Heegaard Floer theory, if $\widehat{Y}$ is obtained from the $q$-surgery on a knot $K$ in an integral homology sphere, then there is a canonical way to identify the set of spin${}^c$ structures on $\widehat{Y}$ with $\intg_q$. In our setup for instanton theory, with some efforts, one could fix a suitable $i_{\ga}$ so that $I^{\sharp}(\widehat{Y},i)$ indeed corresponds to $\widehat{HF}(\widehat{Y},[i])$. Here $\widehat{HF}(\widehat{Y},[i])$ is the hat version of Heegaard Floer homology, of the $3$-manifold $\widehat{Y}$ and the spin${}^c$ structure on $\widehat{Y}$ identified with $[i]\in\intg_q$.
\end{rem}

We would like to make a remark on the developments up to date in answering those questions: In \cite{BLY2020}, Baldwin together with the authors proved a more general inequality that bounds the dimension of instanton Floer homology from above by the number of generators of any Heegaard Floer chain complex of the same manifold. In \cite{LY2021,LY2021enhanced}, based on the techniques developed in the current paper, the authors fully answer the third question and relate the Euler characteristic of instanton Floer homology with the Turaev torsion of $3$-manifolds. In \cite{LY2021surgery}, based on the answer to the second question in this paper, the authors develop a large surgery formula for instanton Floer homology of Dehn surgeries along knots.

\begin{convs}If not mentioned, homology groups and cohomology groups are with $\mathbb{Z}$ coefficients. (Sutured) Heegaard Floer homology is with $\mathbb{Z}$ coefficient, while (sutured) instanton Floer homology is with $\mathbb{C}$ coefficient. We write $\mathbb{Z}_n$ for $\mathbb{Z}/n\mathbb{Z}$.

If it is not mentioned, all manifolds are smooth, connected, and oriented. Suppose $M$ is an oriented manifold. Let $-M$ denote the same manifold with the reverse orientation, called the \textbf{mirror manifold} of $M$. If not mentioned, we do not consider orientations of knots. Suppose $K$ is a knot in a 3-manifold $M$. Then $(-M,K)$ is the \textbf{mirror knot} in the mirror manifold. In $S^3$, the mirror knot is also denoted by $\bar{K}$.

For a manifold $M$, let ${\rm int} (M)$ denote its interior. For a submanifold $A$ in a manifold $Y$, let $N(A)$ denote the tubular neighborhood. The knot complement of $K$ in $Y$ is denoted by $Y(K)=Y\backslash {\rm int} (N(K))$.

For a simple closed curve on a surface, we do not distinguish between its homology class and itself. The algebraic intersection number of two curves $\al$ and $\be$ on a surface is denoted by $\al\cdot\be$, while the number of intersection points between $\al$ and $\be$ is denoted by $|\al\cap \be|$. A basis $(m,l)$ of $H_1(T^2;\mathbb{Z})$ satisfies $m\cdot l=-1$. The \textbf{surgery} means the Dehn surgery and the slope $q/p$ in the basis $(m,l)$ corresponds to the curve $qm+pl$.

A \textbf{rational homology sphere} is a closed 3-manifold whose homology groups with rational coefficients are isomorphic to those of $S^3$. An \textbf{integral homology sphere} is defined similarly. A knot $K\subset Y$ is called \textbf{null-homologous} if it represents the trivial homology class in $H_1(Y;\mathbb{Z})$, while it is called \textbf{rationally null-homologous} if it represents the trivial homology class in $H_1(Y;\mathbb{Q})$.

For $r\in\mathbb{R}$, let $\lceil r\rceil$ and $\lfloor r \rfloor$ be the minimal integer and the maximal integer satisfying $\lceil r\rceil\ge r$ and $\lfloor r \rfloor\le r$, respectively. An argument holds for \textbf{large enough} $n$ if there exists a fixed $N\in\mathbb{Z}$ so that the argument holds for any integer $n>N$. An argument holds for \textbf{small enough} $n$ if there exists a fixed $N\in\mathbb{Z}$ so that the argument holds for any integer $n<N$.
\end{convs}
\begin{org}
The paper is organized as follows.

In Section \ref{sec: preliminaries}, we review some backgrounds, including instanton Floer homology of closed manifolds (Subsection \ref{subsec: instanton Floer homology}), Heegaard Floer homology and instanton Floer homology of balanced sutured manifolds (Subsection \ref{subsec: balanced sutured manifolds}), and bypass attachments on balanced sutured manifolds (Subsection \ref{subsec: general bypass}).

In Section \ref{sec: Instanton theory and Heegaard diagrams}, we construct the sutured handlebody $(-H,-\ga)$ for Theorem \ref{thm_1: from heegaard diagram to SHI} in Subsection \ref{subsec: sutured manifold with tangles}, and prove a generalization of Proposition \ref{prop: tangle inequality} in Subsection \ref{subsec: dimension inequality for tangles}, which leads to the proofs of Theorem \ref{thm_1: from heegaard diagram to SHI} and Proposition \ref{prop: direct result}. Then we deal with $(1,1)$-knots and prove Theorem \ref{thm_1: KHI of 1,1 knots} in Subsection \ref{subsec: $(1,1)$-knots} and Theorem \ref{surgery} in Subsection \ref{subsec: large surgery}.

In Section \ref{sec: Instanton theory and knots}, we state basic setups and sketch the proofs of Proposition \ref{prop: I sharp and SHI} and Theorem \ref{thm: torsion spin c decomposition} in Subsection \ref{subsec: basic setup for knots}, leaving the essential parts to the next two subsections. Then we prove Proposition \ref{prop: all but 2g-1 summands are trivial} in Subsection \ref{subsec: surgeries on knots}.

Finally, in Section \ref{sec: future directions}, we discuss some possible future directions.

\end{org}
\begin{ack}
The first author is partially supported by his Ph.D. advisor Tomasz Mrowka's NSF Grant 1808794. The second author would like to thank his supervisor Jacob Rasmussen for his patient guidance and helpful comments and thank his parents for support and constant encouragement. The authors would also like to thank Ciprian Manolescu for helpful comments on the draft of the paper and thank John A. Baldwin, Qilong Guo, Joshua Wang, Yi Xie, and Ruide Fu for helpful conversations. The second author is also grateful to Yi Liu for inviting him to BICMR, Peking University, and Zekun Chen, Honghuai Fang, and Longke Tang for the company when he was writing this paper.
\end{ack}


\section{Instanton Floer homology and balanced sutured manifolds}\label{sec: preliminaries}
\subsection{Instanton Floer homology}\label{subsec: instanton Floer homology}
\bdefn\label{defn_2: admissible pair}
Suppose $Y$ is a closed 3-manifold and $\omega$ is a closed 1-submanifold in $Y$. Suppose further that there is a closed oriented surface $\Sigma\subset Y$ of genus at least one such that the algebraic intersection number $\omega\cdot\Sigma$ is odd. Then the pair $(Y,\omega)$ is called an \textbf{admissible pair}.
\edefn

For admissible pairs, Floer constructed a homology group from $SO(3)$ connections.
\bthm[\cite{floer1990knot}]
Suppose $(Y,\omega)$ is an admissible pair. Then there is a finite-dimensional complex vector space $I^{\omega}(Y)$ called the \textbf{instanton Floer homology} of $(Y,\omega)$.

Suppose $(Y,\omega)$ and $(Y',\omega')$ are two admissible pairs. Suppose $W$ is a cobordism from $Y$ to $Y'$, \textit{i.e.} $ \partial W=-Y\sqcup Y^\p$, and suppose $\nu\subset W$ is a 2-submanifold with $\partial \nu=(-\omega)\sqcup  \omega'$.Then there exists a complex-linear map
$$I(W,\nu):I^{\omega}(Y)\ra I^{\omega'}(Y'),$$called the \textbf{cobordism map} associated to $(W,\nu)$.
\ethm

\brem
For a fixed $3$-manifold $Y$, $I^{\omega}(Y)$ only depends on the class of $\omega$ in $H_1(Y;\intg_2)$.
\erem
For an admissible pair $(Y,\omega)$, any homology class $\al\in H_*(Y)$ induces a complex-linear action on the instanton Floer homology:
$$\mu(\al):I^{\omega}(Y)\ra I^{\omega}(Y).$$
For any two homology classes $\al_1,\al_2\in H_*(Y)$, we have
$$\mu(\al_1+\al_2)=\mu(\al_1)+\mu(\al_2)~{\rm and}~\mu(\al_1)\mu(\al_2)=(-1)^{{\rm deg}(\al_1){\rm deg}(\al_2)}\mu(\al_2)\mu(\al_1).$$
If $b_2(Y)> 0$, we can pick a basis $\be_1,\dots,\be_n$ of $H_2(Y;\mathbb{Q})$ and consider the simultaneous generalized eigenspaces of all the actions $\mu(\be_1),\dots,\mu(\be_n)$. The simultaneous eigenvalues, as a tuple $(\lambda_1,\dots,\lambda_n)$, can be viewed as a linear map from $H_2(Y;\mathbb{Q})$ to $\mathbb{Q}$. This linear map is the analog of the evaluation of the first Chern classes of spin${}^c$ structures in Heegaard Floer homology. We have the following definition.

\bdefn[{\cite[Definition 7.3]{kronheimer2010knots}}]\label{defn_2: top eigenspaces}
Suppose $(Y,\omega)$ is an admissible pair, $R$ is a closed surface of genus at least one, and $\omega\cdot R$ is odd. Let $I^{\omega}(Y|R)$ be the $(2g(R)-2,2)$-generalized eigenspaces of the pair of actions $((\mu(R),\mu({\rm pt}))$ on $I^{\omega}(Y)$, where ${\rm pt}$ is any fixed basepoint on $Y$.
\edefn
However, if $\al=0\in H_2(Y;\mathbb{Q})$, then $\mu(\al)=0$. Hence for a rational homology sphere $Y$, all $\mu$-actions associated to second homology classes are trivial, and we cannot obtain an effective decomposition from $\mu$.




Suppose $M$ is a compact 3-manifold with torus boundary. Suppose $\omega\subset M$ is a closed 1-submanifold such that there exists a closed surface $\Sigma$ of genus at least one with $\omega\cdot \Sigma$ odd. Let $i:\partial M\ra M$ be the inclusion, and let
\begin{equation}\label{eq_2: induced map by inclusion}
i_*:H_1(\partial M;\mathbb{Q})\ra H_1(M;\mathbb{Q}).
\end{equation}
be the induced map on homology. Let $\ga_1,\ga_2,\ga_3$ be three simple closed curves on $\partial M$ with
$$\ga_1\cdot \ga_2=\ga_2\cdot \ga_3=\ga_3\cdot\ga_1=-1.$$
For $i\in\{1,2,3\}$, let $Y_i$ be the closed 3-manifold obtained by Dehn filling along $\ga_i$:
$$Y_i=M\mathop{\cup}_{\ga_i=\{1\}\times\partial D^2} S^1\times D^2.$$
Then clearly for $i\in\{1,2,3\}$, $(Y_i,\omega)$ are all admissible pairs. Floer proved the following theorem.
\bthm[\cite{floer1990knot}]\label{thm_2: Floer's exact triangle}
There is an exact triangle
\begin{equation}\label{eq_2: Floer's triangle}
    \xymatrix@R=6ex{
    I^{\omega}(Y_1)\ar[rr]^{f_1}&&I^{\omega}(Y_2)\ar[dl]^{f_2}\\
    &I^{\omega}(Y_3)\ar[ul]^{f_3}&
    }
\end{equation}
Furthermore, all maps in the exact triangle (\ref{eq_2: Floer's triangle}) are induced by cobordism maps.
\ethm
\brem
In original construction of Floer \cite{floer1990knot} or Scaduto \cite[Section 2]{scaduto2015instanton}, one has to add some extra component to $\omega$ in one of $Y_1$, $Y_2$, and $Y_3$ to make the exact triangle hold. However, from \cite[Section 2.2]{baldwin2020concordance}, Baldwin and Sivek showed that one could wisely choose some other 1-submanifold $\omega'$ to start with. After adding the extra component coming from the original exact triangle, we finally arrive at a 1-submanifold representing the same homology class as $\omega$ in $H_1(Y;\intg_2)$ for all three 3-manifolds.
\erem

\subsection{Balanced sutured manifolds}\label{subsec: balanced sutured manifolds}
\bdefn[{\cite[Definition 2.2]{juhasz2006holomorphic}}]\label{defn_2: balanced sutured manifold}
A \textbf{balanced sutured manifold} $(M,\ga)$ consists of a compact 3-manifold $M$ with non-empty boundary together with a closed 1-submanifold $\ga$ on $\partial{M}$. Let $A(\ga)=[-1,1]\times\ga$ be an annular neighborhood of $\ga\subset \partial{M}$ and let $R(\ga)=\partial{M}\backslash{\rm int}(A(\ga))$, such that they satisfy the following properties.
\begin{enumerate}[(1)]
    \item Neither $M$ nor $R(\ga)$ has a closed component.
    \item If $\partial{A(\ga)}=-\partial{R(\ga)}$ is oriented in the same way as $\ga$, then we require this orientation of $\partial{R(\ga)}$ induces the orientation on $R(\ga)$, which is called the \textbf{canonical orientation}.
    \item Let $R_+(\ga)$ be the part of $R(\ga)$ for which the canonical orientation coincides with the induced orientation on $\partial{M}$ from $M$, and let $R_-(\ga)=R(\ga)\backslash R_+(\ga)$. We require that $\chi(R_+(\ga))=\chi(R_-(\ga))$. If $\ga$ is clear in the contents, we simply write $R_\pm=R_\pm(\ga)$, respectively.
\end{enumerate}

\edefn
For a balanced sutured manifold $(M,\ga)$, Juh\'{a}sz constructed sutured (Heegaard) Floer homology, and Kronheimer and Mrowka constructed sutured instanton Floer homology.

\bdefn[{\cite[Definition 2.11]{juhasz2006holomorphic}}]\label{defn: balanced diagram}
A \textbf{balanced diagram} is a triple $(\Sigma,\al,\be)$ scuh that the followings hold.
\begin{enumerate}[(1)]
    \item $\Sigma$ is a compact surface with boundary.
    \item $\al = \{ \al_1, \dots, \al_n \}$ and $\be = \{ \be_1,\dots, \be_n \}$ are two sets of pair-wise disjoint simple closed curves in the interior of $\Sigma$. We do not distinguish between the set and the union of curves.
    \item The maps $\pi_0(\partial \Sigma)\to \pi_0(\Sigma\backslash \al)$ and $\pi_0(\partial \Sigma)\to \pi_0(\Sigma\backslash\be)$ are surjective.
\end{enumerate}

Let $M$ be the 3-manifold obtained from $\Sigma\times [-1,1]$ by attaching 3–dimensional 2–handles along $\al_i \times \{-1\}$ and $\be_i \times \{1\}$ for each integer $i\in[1,n]$ and let $\ga=\partial \Sigma\times \{0\}$. A balanced diagram $(\Sigma,\al,\be)$ is called \textbf{compatible} with a balanced sutured manifold $(N,\nu)$ if the sutured manifold $(M,\ga)$ is diffeomorphic to $(N,\nu)$. A sutured manifold $(N,\nu)$ is called a \textbf{product sutured manifold} if it is compatible with $(\Sigma,\emptyset,\emptyset)$ for some $\Sigma$.
\edefn
\bthm[\cite{juhasz2006holomorphic}]\label{thm: SFH}
Suppose $(M,\ga)$ is a balanced sutured manifold. Then there is a balanced diagram compatible with $(M,\ga)$. We can construct a $\mathbb{Z}$-module $SFH(M,\ga)$ from compatible balanced diagrams, which is independent of the choices of balanced diagrams and called the \textbf{sutured (Heegaard) Floer homology} of $(M,\ga)$.
\ethm
\brem\label{rem: SFH}
Sutured Heegaard Floer homology generalizes Heegaard Floer homology \cite{ozsvath2004holomorphic} and knot Floer homology \cite{ozsvath2004holomorphicknot,Rasmussen2003}. Suppose $Y$ is a closed 3-manifold and $K\subset Y$ is a knot. Let $Y(1)$ be obtained from $Y$ by removing a 3-ball and let $\delta$ be a simple closed curve on $\partial Y(1)$. Let $\ga$ consist of two meridians of $K$ with opposite orientations. Then there are canonical isomorphisms \[SFH(Y(1),\delta)\cong \hf(Y)~{\rm and}~ SFH(Y(K),\ga)\cong \hfk(Y,K).\]
\erem
\bthm[{\cite[Section 7.4]{kronheimer2010knots}}]\label{thm_2: KM's definition of SHI}
For a balanced sutured manifold $(M,\ga)$, one can associate a triple $(Y,R,\omega)$, called a \textbf{closure} of $(M,\ga)$, such that the followings hold.
\begin{enumerate}[(1)]
    \item $Y$ is a closed 3-manifold such that $M$ is a submanifold of $Y$.
    \item $R\subset Y$ is a closed surface of genus at least one such that $R_{+}(\ga)$ is a submanifold of $R$ and $R\cap {\rm int}(M)=\emptyset$.
    \item  $\omega\subset Y$ is a simple closed curve such that it intersects $R$ transversely at one point and $\omega\cap {\rm int}(M)=\emptyset$.
\end{enumerate}
Moreover, the isomorphism class of $I^{\omega}(Y|R)$ as in Definition \ref{defn_2: top eigenspaces} is independent of the choices of the triple $(Y,R,\omega)$ and is a topological invariant of $(M,\ga)$.
\ethm

\bdefn\label{defn_2: definition of SHI}
For a balanced sutured manifold $(M,\ga)$, the vector space $I^{\omega}(Y|R)$ for a closure $(Y,R,\omega)$ of $(M,\ga)$ is called the \textbf{sutured instanton Floer homology} of $(M,\ga)$. It is also denoted by $SHI(M,\ga)$ to stress the independence of choices of closures as claimed in Theorem \ref{thm_2: KM's definition of SHI}.
\edefn

One important property of these two sutured Floer homologies is that they detect the tautness of balanced sutured manifolds.
\bdefn[{\cite[Definition 2.6]{juhasz2006holomorphic}}]
A balanced sutured manifold $(M,\ga)$ is called \textbf{taut} if $M$ is irreducible and $R(\ga)$ is incompressible and Thurston norm-minimizing in $[R(\ga)]\in H_2(M,\ga).$
\edefn
\bthm[{\cite[Theorem 1.4]{juhasz2008floer} for $SFH$ and  \cite[Theorem 7.12]{kronheimer2010knots} for $SHI$}]\label{thm_2: SHI detects tautness}
Suppose $(M,\ga)$ is a balanced sutured manifold with $M$ irreducible. Then the followings are equivalent.
\begin{itemize}
    \item $(M,\ga)$ is taut.
    \item $SFH(M,\ga)\neq0$.
    \item $SHI(M,\ga)\neq0$.
\end{itemize}
\ethm
Another important property is about the product manifold.
\bthm[{\cite[Corollary 9.6]{juhasz2008floer} for $SFH$ and \cite[Theorem 7.18]{kronheimer2010knots} for $SHI$, both are based on \cite[Theorem 1.1]{ni2007knot}}]\label{product}
Suppose $(M,\ga)$ is a balanced sutured manifold and a homology product (\textit{c.f.} \cite[Definition 9.1]{juhasz2008floer}). Then the followings are equivalent.
\begin{itemize}
\item $(M,\ga)$ is a product sutured manifold (\textit{c.f.} Definition \ref{defn: balanced diagram}).
\item $SFH(M,\ga)\cong \mathbb{Z}$.
\item $SHI(M,\ga)\cong\mathbb{C}$.
\end{itemize}
\ethm

In Theorem \ref{thm_2: KM's definition of SHI}, only the isomorphism class of $SHI$ is well-defined. Later, Baldwin and Sivek improved the naturality of $SHI$, making it possible to discuss elements in $SHI$. Similar work is done by Juh\'{a}sz, Thurston and Zemke \cite{Juhasz2012} for $SFH$ over $\mathbb{Z}_2$, and Kutluhan, Sivek, and Taubes \cite{Kutluhan2013} for sutured $ECH$.

\bthm[{\cite[Section 9]{baldwin2015naturality}}]\label{thm_2: naturality}
For a balanced sutured manifold $(M,\ga)$ and any two closures $(Y_1,R_1,\omega_1)$ and $(Y_2,R_2,\omega_2)$ of $(M,\ga)$, there is an isomorphism
$$\Phi_{1,2}:I^{\omega_1}(Y_1|R_1)\xra{\cong} I^{\omega_2}(Y_2|R_2),$$
which is well-defined up to multiplication by a unit, \textit{i.e.}, a nonzero complex number. Furthermore, the isomorphism $\Phi$ satisfies the following two conditions.
\begin{enumerate}[(1)]
    \item If $(Y_1,R_1,\omega_1)=(Y_2,R_2,\omega_2)$, then $$\Phi_{1,2}\doteq {\rm id},$$where $\doteq$ means equal up to multiplication by a unit.
    \item If there is a third closure $(Y_3,R_3,\omega_3)$, then we have
$$\Phi_{1,3}\doteq\Phi_{2,3}\circ\Phi_{1,2}:I^{\omega_1}(Y_1|R_1)\ra I^{\omega_3}(Y_3|R_3).$$
\end{enumerate}
\ethm

From Theorem \ref{thm_2: naturality}, for a balanced sutured manifold $(M,\ga)$, Baldwin and Sivek \cite[Section 9]{baldwin2015naturality} constructed a projective transitive system based on the vector spaces $I^{\omega}(Y|R)$ coming from different closures of $(M,\ga)$ and the canonical maps $\Phi$ between them. This projective transitive system is denoted by $$\shi(M,\ga).$$We can regard it as a complex vector space well-defined up to multiplication by a unit. From now on, we will write $\shi(M,\ga)$ for the sutured instanton Floer homology of $(M,\ga)$.

\bdefn[{\cite[Section 7.6]{kronheimer2010knots}}]\label{defn_2: framed instanton Floer homology}
Suppose $Y$ is a closed 3-manifold and $K\subset Y$ is a knot. Let $(Y(1),\delta)$ and $(Y(K),\ga)$ be balanced sutured manifolds defined as in Remark \ref{rem: SFH}. The \textbf{framed instanton Floer homology} of $Y$ is defined by
$$I^{\sharp}(Y) \colonequals I^{S^1}(Y\sharp (S^1\times T^2)|\{1\}\times T^2).$$
It is isomorphic to $\shi(Y(1),\delta)$ (\textit{c.f.} \cite[Section 7.4]{kronheimer2010knots}), so we do not distinguish them.
The \textbf{instanton knot homology} of $(Y,K)$ is defined by
\[KHI(Y,K) \colonequals\shi(Y(K),\ga).\]
\edefn
\brem
In \cite{baldwin2015naturality}, in order to make the definition of $KHI$ independent of different choices of knot complements and the position of the meridional suture, Baldwin and Sivek also added a basepoint to the data. However, since in this paper we only care about the dimension of $KHI$, we overlook this ambiguity and omit the basepoint from our notation. Also, the definition of $\shi(Y(1),\delta)$ depends on a choice of basepoint. For the same reason, we omit the basepoint.
\erem
The surgery exact triangle in Theorem \ref{thm_2: Floer's exact triangle} can easily be generalized for $KHI$. Suppose $M$ is a compact 3-manifold with torus boundary and $K\subset M$ is a knot. Let $\ga_1,\ga_2,\ga_3$ be three simple closed curves on $\partial M$ with
$$\ga_1\cdot \ga_2=\ga_2\cdot \ga_3=\ga_3\cdot\ga_1=-1.$$
For $i\in\{1,2,3\}$, let $Y_i$ be a closed 3-manifold obtained by Dehn filling along $\ga_i$ and let $K_i$ be the knot induced by $K$:
$$(Y_i,K_i)=(M,K)\mathop{\cup}_{\ga_i=\{1\}\times\partial D^2} S^1\times D^2.$$
\bthm\label{thm: Scaduto's exact triangle}
There is an exact triangle
\begin{equation}\label{eq: Scaduto's triangle}
    \xymatrix@R=6ex{
    KHI(Y_1,K_1)\ar[rr]^{f_1}&&KHI(Y_2,K_2)\ar[dl]^{f_2}\\
    &KHI(Y_3,K_3)\ar[ul]^{f_3}&
    }
\end{equation}
Furthermore, all maps in the exact triangle (\ref{eq: Scaduto's triangle}) are induced by cobordism maps between corresponding closures of balanced sutured manifolds induced by $(Y_i,K_i)$.
\ethm
Suppose $(M,\ga)$ is a balanced sutured manifold and $S\subset M$ is a properly embedded surface. We state results about the decomposition of $\shi(M,\ga)$ associated to $S$.
\bdefn[{\cite[Definition 2.25]{li2019decomposition}}]\label{defn_2: admissible surfaces}
Suppose $(M,\ga)$ is a balanced sutured manifold and $S\subset (M,\ga)$ is a properly embedded surface in $M$. The surface $S$ is called an \textbf{admissible surface} if the followings hold.
\begin{enumerate}[(1)]
    \item Every boundary component of $S$ intersects $\ga$ transversely and nontrivially.
    \item We require that $\frac{1}{2}|S\cap \ga|-\chi(S)$ is an even integer.
\end{enumerate}
\edefn
For an admissible surface $S\subset(M,\ga)$, there is a well-defined $\mathbb{Z}$ grading on $\shi(M,\ga)$.

\bthm[\cite{li2019direct}]\label{thm_2: grading on SHI}
Suppose $(M,\ga)$ is a balanced sutured manifold and $S\subset (M,\ga)$ is an admissible surface with $n=\frac{1}{2}|S\cap \ga|$. Then there exists a closure $(Y,R,\omega)$ of $(M,\ga)$ so that $S$ extends to a closed surface $\bar{S}\subset Y$ with $\chi(\bar{S})=\chi(S)-n.$ Let $SHI(M,\ga,S,i)$ denote the $(2i)$-generalized eigenspace of $\mu(\bar{S})$ acting on $SHI(M,\ga)=I^{\omega}(Y|R)$. Then $SHI(M,\ga,S,i)$ is preserved by the canonical maps in Theorem \ref{thm_2: naturality}. Thus, the vector space $$\shi(M,\ga,S,i)$$ is well-defined up to multiplication by a unit. Furthermore, the followings hold.
\begin{enumerate}[(1)]
    \item If $|i|>\frac{1}{2}(n-\chi(S))$, then $\shi(M,\ga,S,i)=0.$
    \item If there is a sutured manifold decomposition $(M,\ga)\stackrel{S}{\leadsto}(M',\ga')$ (\textit{c.f.} \cite[Section 3]{gabai1983foliations} and \cite[Definition 2.7]{juhasz2008floer}), then we have
$$\shi(M,\ga,S,\frac{1}{2}(n-\chi(S)))\cong \shi(M',\ga').$$
    \item For any $i\in\intg$, we have
$$\shi(M,\ga,S,i)=\shi(M,\ga,-S,-i).$$
\end{enumerate}
\ethm
\brem
In \cite{li2019direct}, the grading was only constructed for an admissible surface with a connected boundary. When generalizing it to admissible surfaces with multiple boundary components, more choices arise in the construction of the grading. This new ambiguity was reduced to a combinatorial problem as discussed in \cite[Section 3.3]{li2019direct} and was then resolved in \cite{kavi2019cutting}.
\erem

\brem
Term (1) of Theorem \ref{thm_2: grading on SHI} comes from the adjunction inequality of instanton Floer homology (\textit{c.f.} \cite[Proposition 7.5]{kronheimer2010knots}). Term (2) of Theorem \ref{thm_2: grading on SHI} is a restatement of \cite[Proposition 7.11]{kronheimer2010knots}.
\erem
Suppose $(M,\ga)$ is a balanced sutured manifold, and $S\subset M$ is a properly embedded surface. If $S$ is not admissible, then we isotop $S$ to make it admissible.

\bdefn\label{defn_2: stabilization of surfaces}
Suppose $(M,\ga)$ is a balanced sutured manifold, and $S$ is a properly embedded surface. A \textbf{stabilization} of $S$ is a surface $S^\p$ obtained from $S$ by isotopy in the following sense. This isotopy creates a new pair of intersection points:
$$\partial S'\cap\ga=(\partial{S}\cap\ga)\cup \{p_+,p_-\}.$$
We require that there are arcs $\al\subset \partial{S'}$ and $\be\subset \ga$, oriented in the same way as $\partial{S'}$ and $\ga$, respectively, and the followings hold.
\begin{enumerate}[(1)]
    \item $\partial{\al}=\partial{\be}=\{p_+,p_-\}$.
    \item $\al$ and $\be$ cobound a disk $D$ with ${\rm int}(D)\cap (\ga\cup \partial{S}')=\emptyset$.
\end{enumerate}
The stabilization is called \textbf{negative} if $\partial{D}$ is the union of $\al$ and $\be$ as an oriented curve. It is called \textbf{positive} if $\partial{D}=(-\al)\cup\be$. See Figure \ref{fig: pm_stabilization_of_surfaces}. We denote by $S^{\pm k}$ the surface obtained from $S$ by performing $k$ positive or negative stabilizations, repsectively.
\begin{figure}[ht]
\centering
\begin{overpic}[width=2.5in]{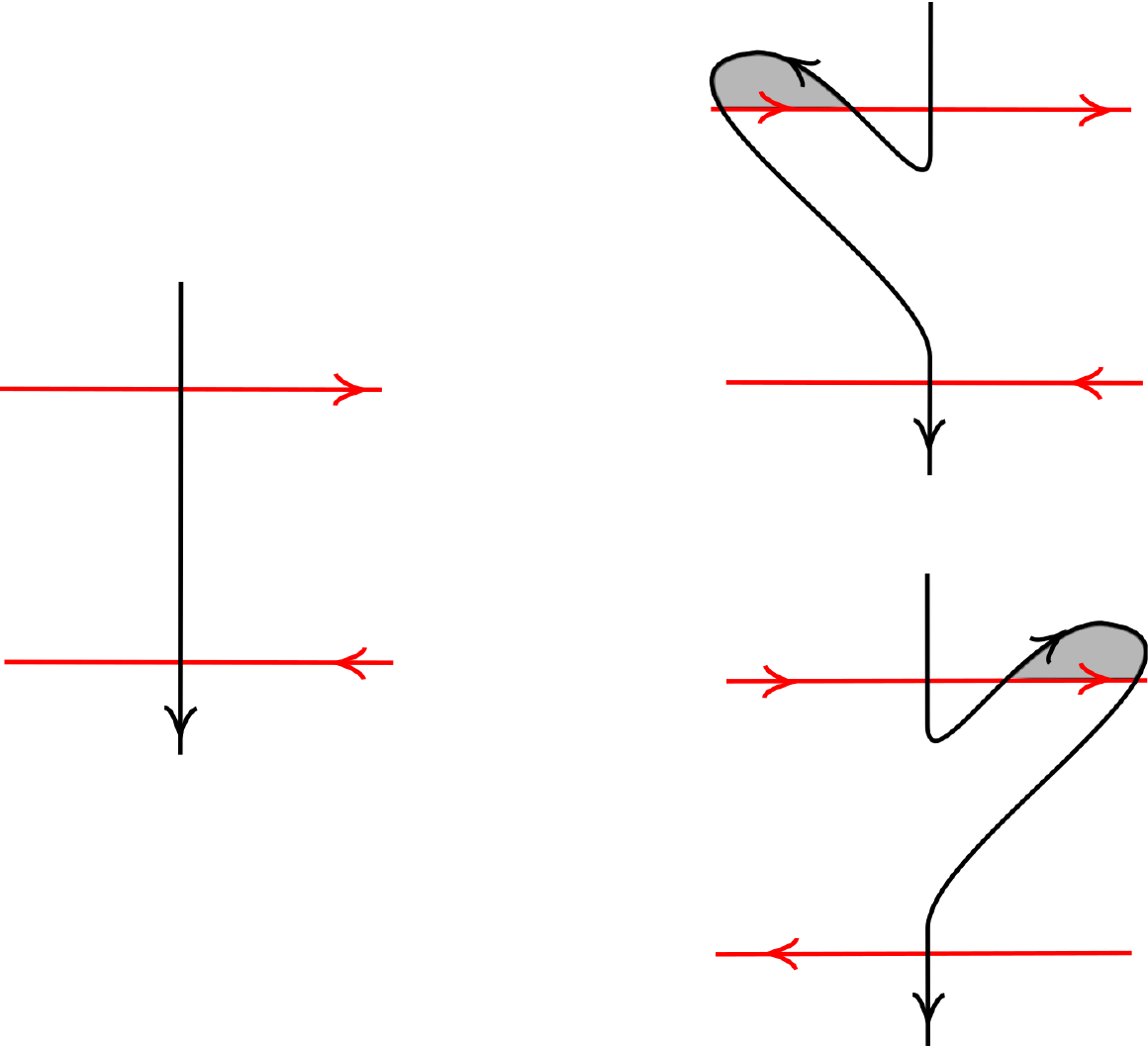}
    \put(13,20){$\partial{S}$}
    \put(-4,57){$\ga$}
    \put(-4,33){$\ga$}
    \put(32,45){\vector(2,1){20}}
    \put(32,45){\vector(2,-1){20}}
    \put(42,21){positive}
    \put(42,66){negative}
    \put(46,83){$D$}
    \put(51,84){\line(1,0){15}}
    \put(63,89){$\al$}
    \put(69,77.5){$\be$}
    \put(93,43){$D$}
    \put(94.5,42){\line(0,-1){7}}
    \put(97,38){$\al$}
    \put(89,28){$\be$}
\end{overpic}
\vspace{0.05in}
\caption{The positive and negative stabilizations of $S$.}\label{fig: pm_stabilization_of_surfaces}
\end{figure}
\edefn
The following lemma is straightforward.
\blem\label{lem_2: stabilization and decomposition}
Suppose $(M,\ga)$ is a balanced sutured manifold, and $S$ is a properly embedded surface. Suppose $S^+$ and $S^-$ are obtained from $S$ by performing a positive and a negative stabilization, respectively. Then we have the following.
\begin{enumerate}[(1)]
    \item If we decompose $(M,\ga)$ along $S$ or $S^+$ (\textit{c.f.} \cite[Section 3]{gabai1983foliations} and \cite[Definition 2.7]{juhasz2008floer}), then the resulting two balanced sutured manifolds are diffeomorphic.
    \item If we decompose $(M,\ga)$ along $S^-$, then the resulting balanced sutured manifold $(M',\ga')$ is not taut, as $R_{\pm}(\ga')$ both become compressible.
\end{enumerate}
\elem

\brem\label{rem_2: pm switches according to orientations of the suture}
The definition of stabilizations of a surface depends on the orientations of the suture and the surface. If we reverse the orientation of the suture or the surface, then positive and negative stabilizations switch between each other.
\erem

The following theorem relates the gradings associated to different stabilizations of the same surface.

\bthm[{\cite[Proposition 4.3]{li2019direct}} and {\cite[Proposition 4.17]{wang2020cosmetic}}]\label{thm_2: grading shifting property}
Suppose $(M,\ga)$ is a balanced sutured manifold and $S$ is a properly embedded surface in $M$ that intersects $\ga$ transversely. Suppose all the stabilizations mentioned below are performed on a distinguished boundary component of $S$. Then, for any $p,k,l\in \intg$ such that the stabilized surfaces $S^{p}$ and $S^{p+2k}$ are both admissible, we have
$$\shi(M,\ga,S^{p},l)=\shi(M,\ga,S^{p+2k},l+k).$$
Note that $S^p$ is a stabilization of $S$ as introduced in Definition \ref{defn_2: stabilization of surfaces}, and, in particular, $S^0=S$.
\ethm

\brem
The original form of Theorem \ref{thm_2: grading shifting property} in \cite{li2019direct} was stated for a Seifert surface in the case of a knot complement. However, it is straightforward to generalize the proof to the case of a general admissible surface in a general balanced sutured manifold, given the condition that the decompositions along $S$ and $-S$ are both taut. This extra condition on taut decompositions was then dropped due to the work in \cite{wang2020cosmetic}.
\erem

\subsection{Bypass attachments}\label{subsec: general bypass}
\quad

In this subsection, we review bypass maps for sutured instanton homology.
\bdefn[{\cite[Section 3.4]{honda2000classification}}]\label{defn_2: bypass arc}
Suppose $(M,\ga)$ is a balanced sutured manifold. An arc $\al\subset \partial{M}$ is called a \textbf{bypass arc} if the arc intersects the suture $\ga$ transversely at three points, including two endpoints.

For a bypass arc $\al$, let $P_0$, $P_1$, and $P_2$ be its three intersection points with $\ga$, ordered by any orientation of $\al$. For $i=0,1,2$, let $\ga_{i}$ be the component of $\ga$ containing $P_i$. If $\ga_{0}=\ga_{1}\neq \ga_{2}$ or $\ga_{1}=\ga_{2}\neq \ga_{0}$, then $\al$ is called a \textbf{wave bypass}. If $\ga_{0}=\ga_{2}\neq \ga_{1}$, then $\al$ is called an \textbf{anti-wave bypass}.
\edefn
\brem
The names of wave and anti-wave follow from \cite[Section 7]{Greene2016}, where waves and anti-waves are arcs whose endpoints are on the same curve. For an anti-wave bypass $\al$, after removing the component of $\ga$ that only contains one intersection point, the arc $\al$ becomes a wave or an anti-wave. See Definition \ref{defn: antiwave}.
\erem
\begin{figure}[ht]
\centering
\begin{overpic}[width=0.7\textwidth]{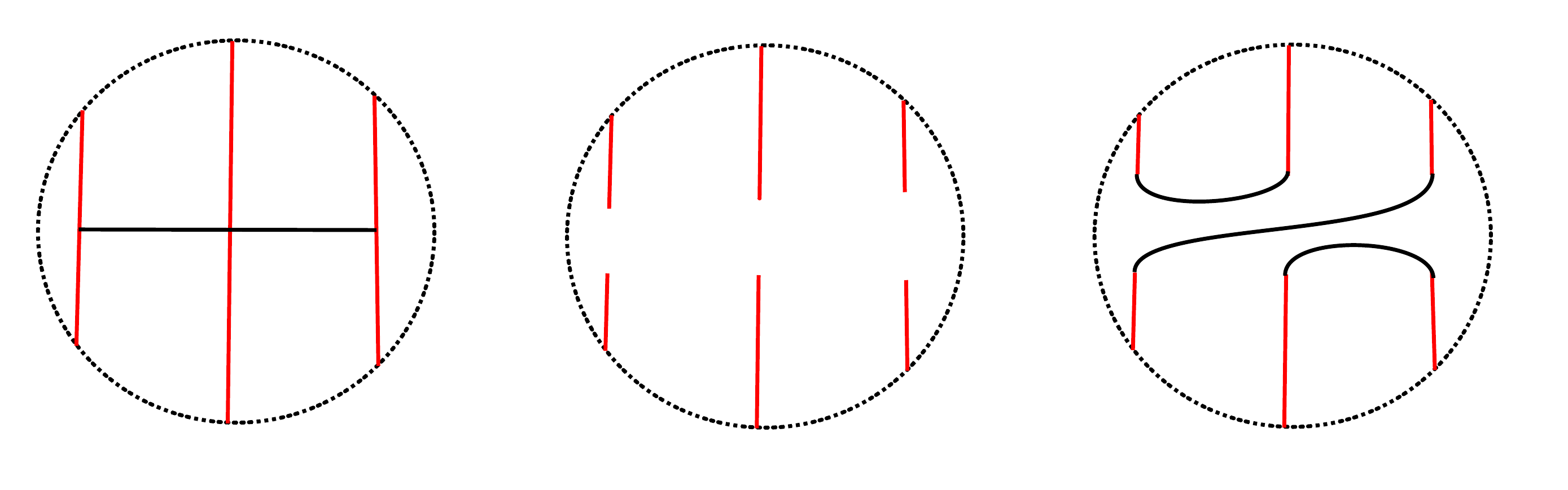}
    \put(18,17.5){$\alpha$}
    \put(39,18){$P_{0,-}$}
    \put(39,13){$P_{0,+}$}
    \put(49,18){$P_{1,-}$}
    \put(49,13){$P_{1,+}$}
    \put(58,18){$P_{2,-}$}
    \put(58,13){$P_{2,+}$}
\end{overpic}
\caption{The bypass arc and the bypass attachment.}\label{fig: the bypass arc}
\end{figure}

Given a bypass arc $\al$ on a balanced sutured manifold $(M,\ga_1)$, we can change the suture locally as follows. Let $D\subset \partial M$ be a neighborhood of the arc $\al\subset \partial{M}$, which is a disk intersecting $\ga$ in three arcs. There are six endpoints after removing $ \ga\cap D$ from $\ga$, labelled as follows. Suppose $P_{i,-}$ and $P_{i,+}$ are two endpoints corresponding to $P_i$, where the sign is chosen so that the oriented arc $\al$, together with the arc-component of $\ga\cap D$ from $P_{i,-}$ to $P_{i,+}$, gives an oriented framing of $\partial M$. Then we connect these six endpoints by the `left-handed-principle': in $D$, a new suture $\ga_2$ is obtained by connecting $P_{0,-}$ to $P_{1,-}$, connecting $P_{2,-}$ to $P_{0,+}$, and connecting $P_{2,+}$ to $P_{1,+}$. See Figure \ref{fig: the bypass arc} for an example of a bypass arc and the corresponding new suture.
\begin{prop}[{\cite[Section 2.3]{honda2002gluing}}]\label{prop: honda}Suppose $(M,\ga)$ is a balanced sutured manifold and $\al$ is a bypass arc. Suppose $\ga_2$ is the new suture described as above. Then $(M,\ga_2)$ is still a balanced sutured manifold.

If $\al$ is a wave bypass, the suture $\ga_2$ is obtained from $\ga_1$ via a `mystery move' (\textit{c.f.} \cite[Figure 8]{honda2002gluing}). If $\al$ is an anti-wave bypass, the suture $\ga_2$ is obtained from $\ga_1$ via a positive Dehn twist on $\partial M$. In both cases, the numbers of components of $\ga_1$ and $\ga_2$ are the same.
\end{prop}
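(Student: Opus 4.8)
The plan is to verify directly that the new suture $\ga_2$ satisfies the three conditions of Definition \ref{defn_2: balanced sutured manifold}, and then to identify the effect of the bypass move on the suture in the wave and anti-wave cases. First I would observe that the entire modification takes place inside the disk $D\subset\partial M$, so conditions involving $M$ itself (no closed component of $M$, etc.) are automatic, and I only need to track what happens to $R(\ga)$ and to the Euler characteristic balance inside $D$. The key local computation is to draw the three arcs of $\ga_1\cap D$ and the three reconnecting arcs of $\ga_2\cap D$ prescribed by the left-handed rule ($P_{0,-}\!\to\!P_{1,-}$, $P_{2,-}\!\to\!P_{0,+}$, $P_{2,+}\!\to\!P_{1,+}$), and to check that the two pictures, glued to the common boundary $\partial D$, produce an orientable $1$-submanifold separating $D$ into regions that attach consistently to $R_+(\ga_1)$ and $R_-(\ga_1)$ outside $D$; this is exactly the statement that bypass attachment is a legal contact-geometric move (Honda), so the orientations match and $R_\pm(\ga_2)$ are well-defined. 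The balance $\chi(R_+(\ga_2))=\chi(R_-(\ga_2))$ then follows because $\chi(R_\pm)$ changes by the same amount on both sides: inside $D$ one checks that $\chi(R_+\cap D)-\chi(R_-\cap D)$ is unchanged by comparing the two configurations of arcs, and outside $D$ nothing changes.

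Next I would establish the count of components. The reconnection sends six endpoints to three new arcs inside $D$; combining these with the arcs of $\ga_1$ outside $D$ is a matching problem on the boundary circle $\partial D$, and one checks case by case (according to which of $\ga_0,\ga_1,\ga_2$ coincide) that the number of closed components is preserved — no component is created or destroyed. For the wave case, I would compare the resulting picture to the "mystery move" of \cite[Figure 8]{honda2002gluing}: since a wave bypass has its arc $\al$ with endpoints on one curve and middle intersection on another, the reconnection pattern is precisely the local model appearing there, so the identification is by inspection of the two figures. For the anti-wave case, where $\ga_0=\ga_2\neq\ga_1$, I would show that the reconnection is supported in an annular neighborhood of $\ga_1$ (the component meeting $\al$ only at the middle point) together with the bigon cut off by $\al$, and that the net effect on $\partial M$ is a full positive Dehn twist along a curve parallel to $\ga_1$; the "left-handed" convention pins down the sign as positive. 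This again reduces to matching the local model against the standard picture.

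The main obstacle I expect is purely bookkeeping of orientations and signs: getting the subscript conventions ($P_{i,\pm}$ determined by the framing condition) to align with the left-handed connecting rule, and then confirming that the induced twist in the anti-wave case is \emph{positive} rather than negative — an error here propagates through all later bypass-exact-triangle arguments. There is no deep content; the proof is a careful reading of Figure \ref{fig: the bypass arc} and the figures in \cite{honda2002gluing}, so I would keep it brief, state that the verification of Definition \ref{defn_2: balanced sutured manifold} is a local check in $D$, and cite \cite[Section 2.3]{honda2002gluing} for the mystery-move and Dehn-twist identifications rather than reproducing those figures in full.
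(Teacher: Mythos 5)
The paper offers no proof of this proposition at all---it is quoted from Honda's gluing paper \cite[Section 2.3]{honda2002gluing}---so the only question is whether your verification sketch is sound. Your overall plan (check the conditions of Definition \ref{defn_2: balanced sutured manifold} by a local computation in the disk $D$, trace the six endpoints against the outside connections to count components, and identify the wave and anti-wave cases with Honda's local models) is the right one, and the balancedness and component-count parts are fine modulo the orientation bookkeeping you flag.

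There are, however, two genuine errors in the identification step. First, you have the two configurations swapped: in Definition \ref{defn_2: bypass arc}, an arc whose two endpoints lie on the same suture component while the middle intersection lies on a different component is the \emph{anti-wave} bypass; a wave bypass is one where the middle point shares its component with exactly one endpoint. Second, and more seriously, in the anti-wave case the positive Dehn twist is not along a curve parallel to the component through the middle point. A parallel copy of that component can be isotoped off the entire suture, so a Dehn twist along it leaves the suture unchanged up to isotopy, whereas the bypass attachment genuinely changes it---on a torus boundary with two parallel dividing curves, an anti-wave bypass changes their slope. The correct twisting curve is the union of the bypass arc with one of the two subarcs of the endpoint component cut off by the endpoints of the arc; it meets the middle component once. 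This is exactly how the proposition is used later in the paper: in Case 1 of the proof of Theorem \ref{thm: less1} the new sutures are described as Dehn twists along $\ga_1^1\cup a_1$ and $\ga_1^2\cup a_1$ (see Figure \ref{fig: 6213b}), so misidentifying the curve would make your ``matching of local models'' fail and would break that downstream argument. With the correct curve, the positivity of the twist does follow from the left-handed reconnection rule, as you anticipated.
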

\bdefn[]\label{defn_2: bypass attachment}
The change from $(M,\ga_1)$ to $(M,\ga_2)$ is called a \textbf{bypass attachment} along $\al$.
\edefn
\brem
The definition of a bypass attachment is due to \cite[Section 3.4]{honda2000classification}. Originally, a bypass attachment is a thickened half-disk attached to a contact 3-manifold $M$ along an arc $\al\subset \partial M$, which carries some particular contact structure. The dividing set on the boundary $\partial M$ can be thought of as equivalent to the suture. After the half-disk-attachment, the dividing set, or the suture, is changed in the way described in Definition \ref{defn_2: bypass attachment}. For our purpose, we do not require the balanced suture manifold $(M,\ga_1)$ to carry a contact structure, while we can still perform an abstract bypass attachment by modifying the suture in a neighborhood of $\al$.
\erem
A bypass attachment induces a map
$$\psi_1:\shi(-M,-\ga_1)\ra\shi(-M,-\ga_2).$$
This map can be explained in the following two ways.

{\bf Contact handle decomposition}. By Ozbagci \cite[Section 3]{ozbagci2011contact}, the half-disk-attachment can be decomposed into two contact handle attachments. First, one can attach a contact 1-handle along two endpoints of the bypass arc $\al$. Then one can attach a contact 2-handle along a circle that is the union of $\al$ and an arc on the attached contact 1-handle. Topologically, the 1-handle and the 2-handle form a canceling pair, so the diffeomorphism type of the 3-manifold does not change. However, the contact structure is changed, and the suture $\ga_1$ is replaced by $\ga_2$. In \cite[Section 5]{baldwin2016instanton}, Baldwin and Sivek constructed contact handle attaching maps for $\shi$. Following Ozbagci's idea, they defined the map
$$\psi_1:\shi(-M,-\ga_1)\ra\shi(-M,-\ga_2)$$
to be the composition of contact handle attaching maps corresponding to the contact 1-handle and the 2-handle attaching.

{\bf Contact gluing maps}. The half-disk attachment can be reinterpreted as follows. We start with $(M,\ga_1)$. Then pick $[1,2]\times\partial M$ to be a collar of the boundary, carrying a particular contact structure $\xi$ specified by the bypass attachment, so that the boundary $\{1,2\}\times \partial M$ is convex and the dividing set is $(-\ga_1)\sqcup\ga_2$, with $\ga_i\subset\{i\}\times\partial M$ for $i\in\{1,2\}$. Then we glue $[1,2]\times \partial M$ to $M$ by the identification $\{1\}\times \partial M=\partial M$. The new 3-manifold is diffeomorphic to $M$, while the suture $\ga_1$ is replaced by $\ga_2$. In \cite{honda2008contact}, Honda, Kazez, and Mati\'c defined a gluing map for $SFH$
$$\Phi_{\xi}:SFH(-M,-\ga_1)\ra SFH(-M,-\ga_2),$$
which was then re-visited by Juh\'asz and Zemke \cite{juhasz1803contact}. Later, the first author \cite{li2018gluing} defined a similar gluing map for $\shi$, and we can define
$$\psi_1=\Phi_{\xi}:\shi(-M,-\ga_1)\ra\shi(-M,-\ga_2).$$

These two points of view are equivalent due to \cite[Section 4]{li2018gluing}. We have some useful corollaries.

\blem\label{lem_2: disjoint bypass commutes}
Suppose $(M,\ga)$ is a balanced sutured manifold and $\al,\be\subset\partial M$ are two bypass arcs with $\al\cap\be=\emptyset$. Let $\psi_{\al}$ and $\psi_{\be}$ be the bypass maps associated to $\al$ and $\be$, respectively. Let $(M,\ga')$ be the resulting balanced sutured manifold after bypass attachments along both $\al$ and $\be$. Then we have
$$\psi_{\al}\circ\psi_{\be}=\psi_{\be}\circ\psi_{\al}:\shi(-M,-\ga)\ra\shi(-M,-\ga').$$
\elem
\bpf
Consider bypasses as the compositions of contact handle attachments. Since $\al\cap\be=\emptyset$, the contact handles associated to $\al$ and $\be$ are attached to disjoint regions on $\partial M$. Then it follows immediately that the corresponding contact handle attaching maps commute with each other.
\epf

\blem\label{lem_2: isotopic bypasses}
Suppose $(M,\ga)$ is a balanced sutured manifold and $\al_0,\al_1\subset\partial M$ are two bypass arcs. Suppose further that these two arcs are isotopic as bypass arcs, \textit{i.e.}, there is a smooth family $\al_t$ of bypass arcs for $t\in[0,1]$. Then $\al_1$ and $\al_2$ lead to isotopic balanced sutured manifold $(M,\ga')$, and the bypass maps $\psi_{\al_1}$ and $\psi_{\al_2}$ are the same:
$$\psi_{\al_1}=\psi_{\al_2}:\shi(-M,-\ga)\ra\shi(-M,-\ga').$$
\elem

\bpf
It follows from the contact handle decomposition interpretation of the bypass attachments.
\epf

\brem
On the level of contact geometry, Honda has already proved Lemma \ref{lem_2: disjoint bypass commutes} and Lemma \ref{lem_2: isotopic bypasses} in \cite{honda2000classification}. Thus, these two lemmas can also be proved by combining Honda's results with the functoriality of gluing maps $\Phi_{\xi}$ in \cite{li2018gluing}.
\erem

An important properties of bypass maps is the bypass exact triangle. Suppose $(M,\ga_1)$ is a balanced sutured manifold, and $\al$ is a bypass arc. Suppose $D$ is a neighborhood of $\al\subset \partial M$ and $(M,\ga_2)$ is obatined by the bypass attachment along $\al$. As shown in Figure \ref{fig: the bypass triangle}, after attaching a bypass along $\al$, there is an obvious bypass arc $\be\subset D$. When we do the bypass attachment along $\be$, we obtain the third balanced sutured manifold $(M,\ga_3)$. It still carries an obvious bypass arc $\theta$. When we further do the bypass attachment along $\theta$, we obtain $(M,\ga_1)$ again. Let $\psi_1$, $\psi_2$, and $\psi_3$ be the bypass maps associated to $\al$, $\be$, and $\theta$, respectively. We have the following theorem.

\begin{figure}[ht]
\centering
\begin{overpic}[width=0.5\textwidth]{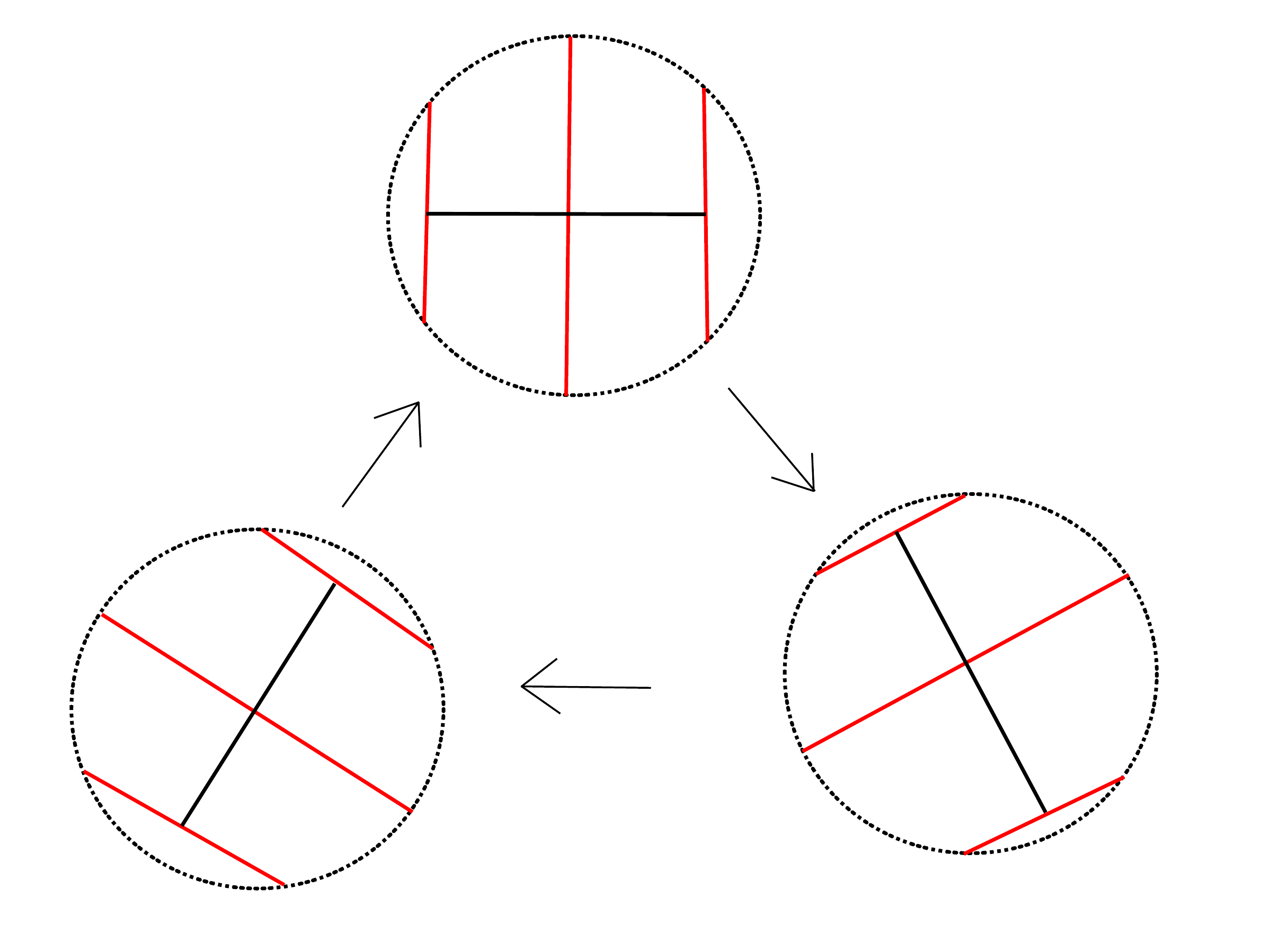}
    \put(18,22){$\theta$}
    \put(38,58){$\al$}
    \put(73,28){$\be$}
\end{overpic}
\vspace{-0.05in}
\caption{The bypass triangle.}\label{fig: the bypass triangle}
\end{figure}

\bthm[{\cite[Theorem 1.20]{baldwin2018khovanov}}]\label{thm_2: bypass exact triangle on general sutured manifold}
There exists an exact triangle
\begin{equation*}
\xymatrix@R=6ex{
\shi(-M,-\ga_1)\ar[rr]^{\psi_1}&&\shi(-M,-\ga_2)\ar[dl]^{\psi_2}\\
&\shi(-M,-\ga_3)\ar[lu]^{\psi_3}&
}
\end{equation*}
\ethm

\section{Instanton Floer homology and Heegaard diagrams}
\label{sec: Instanton theory and Heegaard diagrams}
\subsection{Balanced sutured manifolds with tangles}\label{subsec: sutured manifold with tangles}
\bdefn[{\cite[Definition 1.1]{xie2019tangle}}]\label{defn: tangle}
Suppose $(M,\ga)$ is a balanced sutured manifold. A \textbf{tangle} $T\subset (M,\ga)$ is a properly embedded 1-submanifold such that $T\cap A(\ga)=\emptyset.$ A tangle $T$ is called \textbf{balanced} if $$|T\cap R_+(\ga)|=|T\cap R_-(\ga)|.$$ A component $a$ of $T$ is called \textbf{vertical} if $a$ is an arc from $R_+(\ga)$ to $R_-(\ga)$. A tangle $T $ is called \textbf{vertical} if every component of $T$ is vertical. Note that vertical tangles are balanced.

Suppose $T\subset (M,\ga)$ is a vertical tangle, we construct a new balanced sutured manifold $(M_T,\ga_T)$, where $M_T=M\backslash N(T)$ and $\ga_T$ is the union of $\ga$ and one meridian for each component of $T$.
\edefn

\bthm[{\cite{xie2019tangle}}]
Suppose $(M,\ga)$ is a balanced sutured manifold and suppose $T\subset (M,\ga)$ is a balanced tangle. Then there is a finite-dimensional complex vector space $SHI(M,\ga,T)$, whose isomorphism class is a topological invariant of the triple $(M,\ga,T)$.
\ethm
We have the following theorem.

\bthm[{\cite[Lemma 7.10]{xie2019tangle}}]\label{thm_2: isomorphism between SHI with and without tangles}
For a vertical tangle $T\subset (M,\ga)$, there is an isomorphism
$$SHI(M,\ga,T)\cong SHI(M_T,\ga_T).$$
\ethm

Then we introduce Heegaard diagrams of closed 3-manifolds and knots.
\bdefn
A \textbf{(genus $g$) diagram} is a triple $(\Sigma,\al,\be)$, where
\begin{enumerate}[(1)]
    \item $\Sigma$ is a closed surface of genus $g$;
    \item $\al=\{\al_1,\dots,\al_m\}$ and $\be=\{\be_1,\dots,\be_n\}$ are two sets of pair-wise disjoint simple closed curves on $\Sigma$. We do not distinguish the set and the union of curves.
\end{enumerate}
Let $N_0$ be the manifold obtained from $\Sigma\times [-1,1]$ by attaching 3–dimensional 2–handles along $\al_i \times \{-1\}$ and $\be_j\times \{1\}$ for each integer $i\in[1,m]$ and each integer $j\in[1,n]$. Let $N$ be the manifold obtained from $N_0$ by capping off spherical boundaries. A diagram $(\Sigma,\al,\be)$ is called \textbf{compatible} with a 3-manifold $M$ if $M\cong N$. In such case, we also write $M$ is compatible with $(\Sigma,\al,\be)$, or $(\Sigma,\al,\be)$ is a diagram of $M$.
\edefn
\bdefn\label{defn_2: heegaard diagram}
A \textbf{(genus $g$) Heegaard diagram} is a (genus $g$) diagram $(\Sigma,\al,\be)$ satisfying the following conditions.
\begin{enumerate}[(1)]
    \item $|\al|=|\be|=g$, \textit{i.e.}, there are $g$ curves in either tuple.
    \item $\Sigma\backslash \al$ and $\Sigma\backslash\be$ are connected.
\end{enumerate}
Given a Heegaard diagram $(\Sigma,\al,\be)$, the manifolds compatible with $(\Sigma,\al,\emptyset)$ and $(\Sigma,\emptyset,\be)$ are called the \textbf{$\al$-handlebody} and the \textbf{$\be$-handlebody}, respectively.
\edefn

\bdefn
A \textbf{(genus $g$) doubly-pointed Heegaard diagram} $(\Sigma,\al,\be,z,w)$ is a (genus $g$) Heegaard diagram with two points $z$ and $w$ in $\Sigma\backslash \al\cup\be$. Let $a\subset\Sigma\backslash\al$ and $b\subset\Sigma \backslash\be$ be two arcs connecting $z$ to $w$. Suppose $a^\p$ and $b^\p$ are obtained from $a$ and $b$ by pushing them into $\al$-handlebody and $\be$-handlebody, respectively. A doubly-pointed Heegaard diagram $(\Sigma,\al,\be,z,w)$ is called \textbf{compatible} with a knot $K$ in a closed 3-manifold $Y$ if $(\Sigma,\al,\be)$ is compatible with $Y$ and the union $a^\p\cup b^\p$ is isotopic to $K$.
\edefn
\bdefn
Suppose $(\Sigma,\al,\be)$ is a Heegaard diagram of a closed 3-manifold $Y$. A knot $K\subset Y$ is called the \textbf{core knot} of $\be_i$ for some $\be_i\subset \be$ if it is constructed as follows. Let $M$ be the manifold compatible with the diagram $(\Sigma,\al,\be\backslash \be_i)$. It has a torus boundary and $\be_i$ induces a simple closed curve $\be_i^\p$ on $\partial M$. Dehn filling $M$ along $\be_i^\p\subset \partial M$ gives $Y$. Let $K$ be the image of $S^1\times {\rm 0}\subset S^1\times D^2$ under the filling map, where $S^1\times D^2$ is the filling solid torus.
\edefn
The following is a basic fact in 3-dimensional topology.

\bprop[{\cite[Section 2.2]{ozsvath2004holomorphicknot}}]
For any closed 3-manifold $Y$ and any knot $K\subset Y$, there is a doubly-pointed Heegaard diagram compatible with $(Y,K)$.
\eprop
In the rest of this subsection, we provide the construction of the balanced sutured handlebody $(H,\ga)$ used in Theorem \ref{thm_1: from heegaard diagram to SHI}.
\bcons\label{cons: doubly-pointed}
Suppose $Y$ is a closed 3-manifold and $K\subset Y$ is a knot. Suppose $(\Sigma,\al,\be,z,w)$ is a genus $(g-1)$ doubly-pointed Heegaard diagram compatible with $(Y,K)$. Consider the manifold $M$ obtained from $\Sigma\times [-1,1]$ by attaching a 3-dimensional 1-handle along $\{z,w\}\times \{1\}$. Let $\Sigma^\p$ be the component of $\partial M$ with genus $g$. Let $\al_{g}\subset \Sigma^\p$ be the curve obtained by running from $z$ to $w$ and then back over the 1-handle. Let $\be_{g}\subset \Sigma^\p$ be a small circle around $z$. Set $$\al^\p=\al\times\{1\}\cup \{\al_{g}\}~{\rm and}~ \be^\p=\be\times\{1\}\cup \{\be_{g}\}.$$Then $(\Sigma^\p,\al^\p,\be^\p)$ is a genus $g$ Heegaard diagram compatible with $Y$. Since $\be_{g}$ is a meridian of $K$, the knot $K$ is the core knot of $\be_{g}$.
\econs

\bcons\label{cons: heegaard diagram gives rise to tangles}
Suppose $Y$ is a closed 3-manifold and $(\Sigma^\p,\al^\p,\be^\p=\{\be_1,\dots,\be_g\})$ is a genus $g$ Heegaard diagram compatible with $Y$. Let $Y(1)$ be obtained from $Y$ by removing a 3-ball. The manifold $Y(1)$ can be obtained from the $\al^\p$-handlebody by attaching 3–dimensional 2-handles along $\be_i$ for each integer $i\in[1,g]$. Note that a 3-dimensional 2-handle can be thought of as $[-1,1]\times D^2$ attached along $[-1,1]\times\partial D^2$. Let $\theta_i=[-1,1]\times \{0\}$ be the co-core of the 2-handle attached along $\be_i$. We have a properly embedded tangle in $Y(1)$: $$T=\theta_1\cup\dots\cup\theta_g.$$

Pick a simple closed curve $\delta\subset\partial{Y(1)}$ such that for any $i$, two endpoints of $\theta_i$ lie on two different sides of $\delta$. From the construction, the manifold $Y(1)_{T}=Y(1)\backslash N(T)$ is the $\al^\p$-handlebody and the suture $\delta_T$ consists of all $\be_i$ curves and a curve $\be_{g+1}$ induced by $\delta$, \textit{i.e.}$$\delta_T=\be_1\cup\cdots\cup\be_g\cup \be_{g+1}.$$Hence $R_{+}(\delta_T)$ and $R_-(\delta_T)$ can be obtained from $\Sigma\backslash \be$ by cutting along $\be_g$, which are both spheres with $(g+1)$ punctures.

\econs

\bcons\label{cons: meridian sutures}
Suppose $Y$ is a closed 3-manifold and $K\subset Y$ is a knot. Suppose $(\Sigma,\al,\be=\{\be_1,\dots,\be_{g-1}\},z,w)$ is a genus $(g-1)$ doubly-pointed Heegaard diagram of $(Y,K)$. We apply Construction \ref{cons: doubly-pointed} to obtain a genus $g$ Heegaard diagram $(\Sigma^\p,\al^\p,\be^\p=\{\be_1,\dots,\be_{g}\})$ of $Y$, and then apply Construction \ref{cons: heegaard diagram gives rise to tangles} to obtain a balanced sutured handlebody $$(H,\ga)=(Y(1)_T,\delta_T=\be_1\cup\cdots\cup\be_{g+1}).$$

Note that the diagram $(\Sigma^\p,\al^\p,\be)$ is compatible with the knot complement $Y(K)$. Suppose $\be_g^\pp$ and $\be_{g+1}^\pp$ are curves on $\partial Y(K)$ induced by $\be_g$ and $\be_{g+1}$, respectively. Since $\be_g^\pp\cap \be_{g+1}^\pp=\emptyset$ and $\partial Y(K)\cong T^2$, the curve $\be_{g+1}^\pp$ is parallel to $\be_{g}^\pp$. Since $\be_g^\pp$ is a meridian of $K$ and $(Y(K),\be_g^\pp\cup \be_{g+1}^\pp)$ is a balanced sutured manifold, the curve $\be_{g+1}^\pp$ must be another meridian of $K$ with the orientation opposite to that of $\be_{g}^\pp$.
\econs
We provide an explicit construction of the curve $\be_{g+1}\subset \partial H$ in Construction \ref{cons: meridian sutures}.
\bcons\label{cons: heegaard diagram gives rise to sutured handlebody}
Suppose $(\Sigma^\p,\al^\p,\be^\p=\{\be_1,\dots,\be_g\})$ is a genus $g$ Heegaard diagram compatible with a closed 3-manifold $Y$. Let $H$ be the $\al^\p$-handleboby. For any integer $i\in[1,g]$, let $\be_i$ be oriented arbitrarily and let $\be_i^\p\subset \partial H$ be the curve obtained by pushing off $\be_i$ to the right with respect to the orientation. Suppose $\be_i^\p$ is oriented reversely. Let $\be_{g+1}$ be the curve obtained from $\be_i^\p$ by band sums with respect to orientations so that $\be_{g+1}$ is disjoint from $\be_1,\dots,\be_{g}$. Set
$$\ga=\be_1\cup\dots\cup\be_{g+1}.$$
It is straightforward to check that $(H,\ga)$ is the one obtained in Construction \ref{cons: meridian sutures}.
\econs
We can also obtain the original 3-manifold $Y$ from the sutured handlebody $(H,\ga)$ as follows.

\bcons\label{cons: sutured handlebody gives rise to heegaard diagrams}
Suppose $H$ is a handlebody, and $\ga$ is a suture on $\partial H$ such that $R_{+}(\ga)$ and $R_-(\ga)$ are both spheres with $(g+1)$ punctures. Let $\Sigma=\partial H$. Suppose $\Sigma$ has genus $g$. Let $\al_1$,\dots,$\al_g$ be boundaries of $g$ compressing disks $D_1,\dots, D_g$ so that $H\backslash (D_1\cup\cdots\cup D_g)$ is a 3-ball. Since $R_{+}(\ga)$ and $R_-(\ga)$ are both spheres with $(g+1)$ punctures, the suture $\ga$ has $(g+1)$ components. We can take arbitrary $g$ of them to form $\be$. Then $(\Sigma,\al,\be)$ is a Heegaard diagram. Let $Y$ be a closed 3-manifold compatible with $(\Sigma,\al,\be)$. Since different choices of such $g$ curves from $\ga$ are related to each other by a finite sequence of handle slides, the manifold $Y$ is well-defined up to diffeomorphism.

Let $\delta$ be the remaining component of $\ga$ and let $T$ be the union of co-cones of $\be_i$ curves as in Construction \ref{cons: heegaard diagram gives rise to tangles}. It is straightforward to check that $(Y(1)_T,\delta_T)=(H,\ga)$.
\econs

\subsection{A dimension inequality for tangles}\label{subsec: dimension inequality for tangles}
\quad

In this subsection, we prove a generalization of Proposition \ref{prop: tangle inequality}.
\bprop\label{prop_3: dimension inequality for tangles}
Suppose $(M,\ga)$ is a balanced sutured manifold and $T$ is a vertical tangle. Suppose $a\subset T$ is a component of $T$ so that
$[a]=0\in H_1(M,\partial M;\mathbb{Q}).$
Let $T'=T\backslash a$. Then we have
$${\rm dim}_{\mathbb{C}}SHI(-M,-\ga,T')\leq {\rm dim}_{\mathbb{C}}SHI(-M,-\ga,T).$$
\eprop

\bpf
We prove this proposition in followings steps. By Theorem \ref{thm_2: isomorphism between SHI with and without tangles}, it suffices to prove $$\dim_{\mathbb{C}}\shi(-M_{T},-\ga_{T})\le\dim_{\mathbb{C}}\shi(-M_{T^\p},-\ga_{T^\p}),$$where $(M_T,\ga_T)$ and $(M_{T^\p},\ga_{T^\p})$ are constructed as in Definition \ref{defn: tangle}.

{\bf Step 1}. We construct an auxiliary manifold $M_{T_0}$ with a family of sutures $\Ga_{n}$ for $n\in \mathbb{N}\cup \{-,+\}$ on $\partial M_{T_0}$. 

Since $[a]=0\in H_1(M,\partial M;\mathbb{Q}),$ there exist $q,k\in\intg$ and arcs $b_1,\dots,b_k\subset \partial M$ such that there exists a surface $S$ in $M$ with $\partial S$ consisting of $b_1$,\dots, $b_k$ and $q$ copies of $a$. Suppose components of $T$ are $a_1,a_2,\dots,a_m$, with $a_1=a$.

As in Definition \ref{defn: tangle}, we form a new balanced sutured manifold $(M_T,\ga_T)$ as follows. Since $T$ has $m$ components, $\partial N(T)$ intersects each of $R_+(\ga)$ and $R_-(\ga)$ in $m$ disks and intersects ${\rm int}(M)$ in $m$ cylinders. Let $C_1,\dots,C_m$ be the cylinders corresponding to $a_1,\dots,a_m$, respectively. Let $M_T=M\backslash{\rm int}(N(T))$. For any integer $i\in [1,n]$, let $\ga_i\subset C_i$ be a simple closed curve representing the generator of $H_1(C_i)$. Let $a_i$ be oriented from $R_+(\ga)$ to $R_-(\ga)$. Then $\ga_i$ has an induced orientation from $a_i$. Let
$$\ga_T=\ga\cup\ga_1\cup\dots\cup \ga_m.$$

The surface $S$ is modified into a properly embedded surface $S_T$ in $M_T$ as follows. First, for the part of $\partial S$ consists of $q$ copies of $a$, we isotop them to be on $C_1$. Then $b_j$ for any integer $j\in[1,k]$ can be viewed as an arc on $\partial M_T\backslash C_1$. We can isotop $S$ to make it intersect $a_i$ transversely for any integer $i\in[2,m]$. Let $S_T$ be obtained from $S$ by removing disks in $N(T)$. Hence $S_T\cap C_1$ consists of $q$ arcs, each intersecting $\ga_1$ transversely at one point. For any integer $i\in[2,m]$, the intersection $S_T\cap C_i$ is a (possibly empty) collection of circles that are parallel to $\ga_i$.

Note that $b_1$ is disjoint from all $\ga_i$, but intersects the original suture $\ga$. Since the arc $b_1\subset \partial S_T\subset \partial M_T$ has one endpoint in $R_+(\ga_T)$ and the other in $R_-(\ga_T)$, the intersection number of $\ga$ and $b_1$ must be odd. Let $b_1$ be oriented from $R_+(\ga_T)$ to $R_-(\ga_T)$ and let the intersection points between $b_1$ and $\ga$ be $p_1,\dots,p_l$ with $l$ odd, ordered by the orientation of $b_1$. Let $b_1^\p$ be a perturbation of $b_1$ such that $b_1^\p$ and $b_1$ meet at endpoints. Suppose the intersection points between $b_1^\p$ and $\ga$ are $q_1,\dots,q_l$ so that $q_i$ is near $p_i$ for integer $i\in[1,l]$.

If $l=1$, then $(M_T,\ga_T)$ is enough for the proof. If $l>1$, we have to perform the following modification on $(M_T,\ga_T)$. We attach a contact 1-handle to $(M,\ga)$ along $q_1$ and $q_{l-1}$ in the sense of Baldwin and Sivek \cite[Section 3.2]{baldwin2016instanton}, or equivalently, attach a product 1-handle in the sense of Kronheimer and Mrowka \cite[Proof of Proposition 6.9]{kronheimer2010knots}. They both proved that the balanced sutured manifolds before and after attaching such a 1-handle have exactly the same closure. Thus, after attaching the 1-handle, the sutured instanton Floer homology does not change. We still use $(M,\ga)$ and $(M_T,\ga_T)$ to denote sutured manifolds after attaching the 1-handle. Now we can choose an arc $\zeta$ satisfying the following conditions.

\benu
\item Endpoints of $\zeta$ are contained in $\partial C_1$.

\item The arc $\zeta$ intersects $\ga$ transversely at one point.

\item The arc $\zeta$ is disjoint from $S_T$.
\eenu
The arc $\zeta$ can be obtained by first going along $b_1'$ until reaching $q_1$, then going along the 1-handle, and going back to $b_1'$ at the point $q_{l-1}$ and then keeping going along $b_1'$. Finally, we slightly perturb this arc to make it disjoint from $S_T$. See the middle subfigure of Figure \ref{fig: adding_one_handle}.

\begin{figure}[ht]
\centering
\begin{overpic}[width=4in]{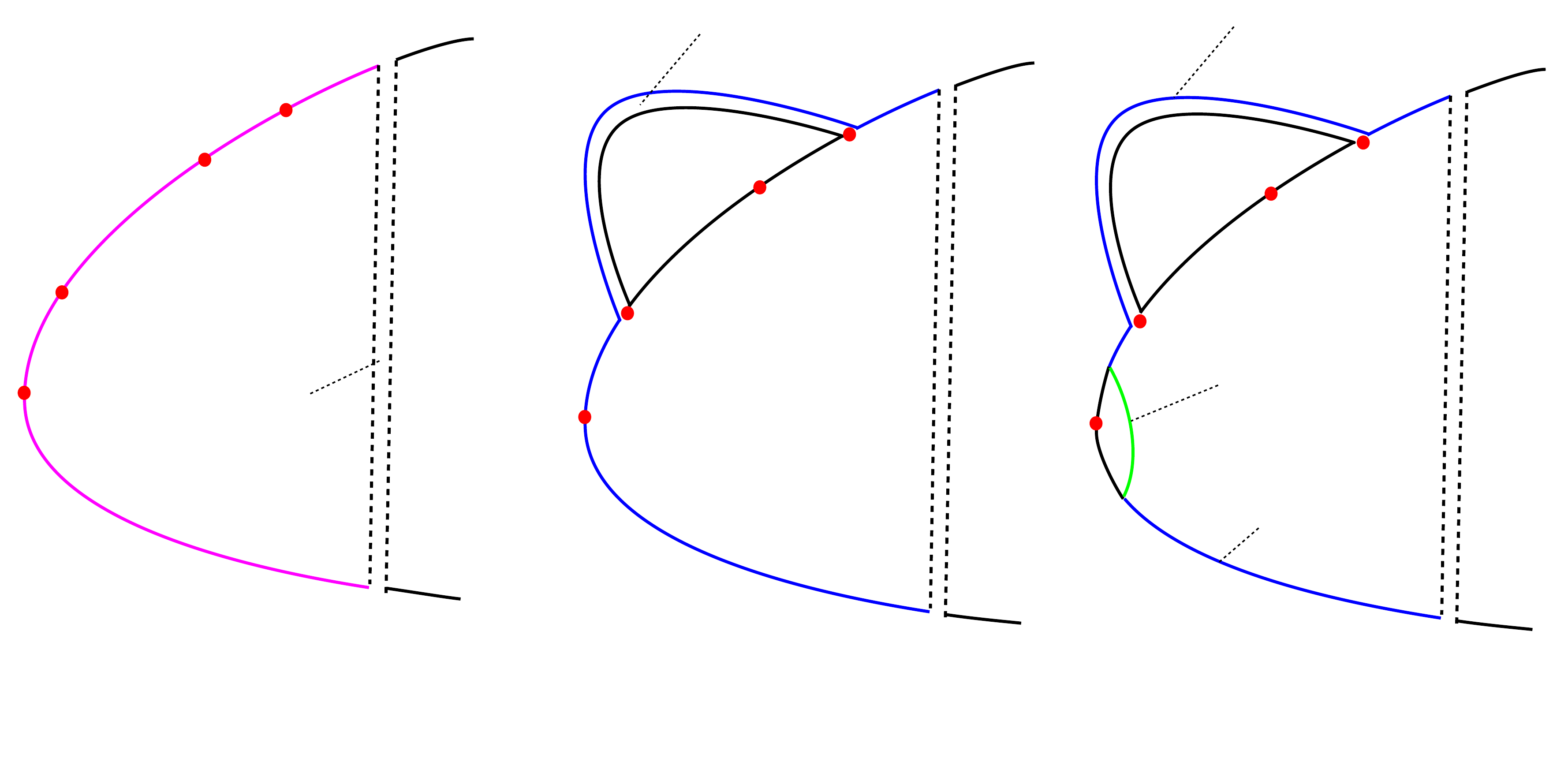}
    \put(11,23){$N(a_1)$}
    \put(3,15){$b_1$}
    \put(36,15){$b_1'$}
    \put(-2.5,24){$p_l$}
    \put(-4,31){$p_{l-1}$}
    \put(8,40){$p_{2}$}
    \put(15,44){$p_{1}$}
    \put(33,23){$q_l$}
    \put(42,30){$q_{l-1}$}
    \put(48,36){$q_{2}$}
    \put(53,39){$q_{1}$}
    \put(45,47){1-handle}
    \put(79,47.5){$\zeta_+$}
    \put(81,17){$\zeta_{-}$}
     \put(78,25){$a_0$}
\end{overpic}
\vspace{-0.5in}
\caption{Left, the arc $b_1$; middle, the 1-handle on $\beta_1'$ and the arc $\zeta$; right, the arcs $\zeta_+$, $a_0$, and $\zeta_-$.}\label{fig: adding_one_handle}
\end{figure}

Let $a_0$ be the arc obtained by pushing a neighborhood of $q_l$ in $\zeta$ into the interior of $M_T$. Suppose the endpoints of $a_0$ are still in $\zeta$ and $a_0$ is disjoint from $S_T$. The arc $a_0$ is a vertical tangle in $M_T$ and hence also a vertical tangle in the original manifold $M$. Let $T_0=T\cup a_0$, and $T_0^\p=T^\p\cup a_0$. Let $(M_{T_0},\ga_{T_0})$ be obtained similarly as $(M_{T},\ga_T)$. Since $M_{T_0}=M_T\backslash N(a_0)$, the cylinder $C_0$ and the suture $\ga_0$ are defined similarly as $C_i$ and $\ga_i$ for any integer $i\in [1,m]$. A sketch is shown in the left-subfigure of Figure \ref{fig: the new Ga_0}. Let $\zeta_{\pm}$ be two parts of $\zeta$ contained in $R_{\pm}(\ga_{T_0})$, respectively.

\begin{figure}[ht]
\centering
\begin{overpic}[width=0.7\textwidth]{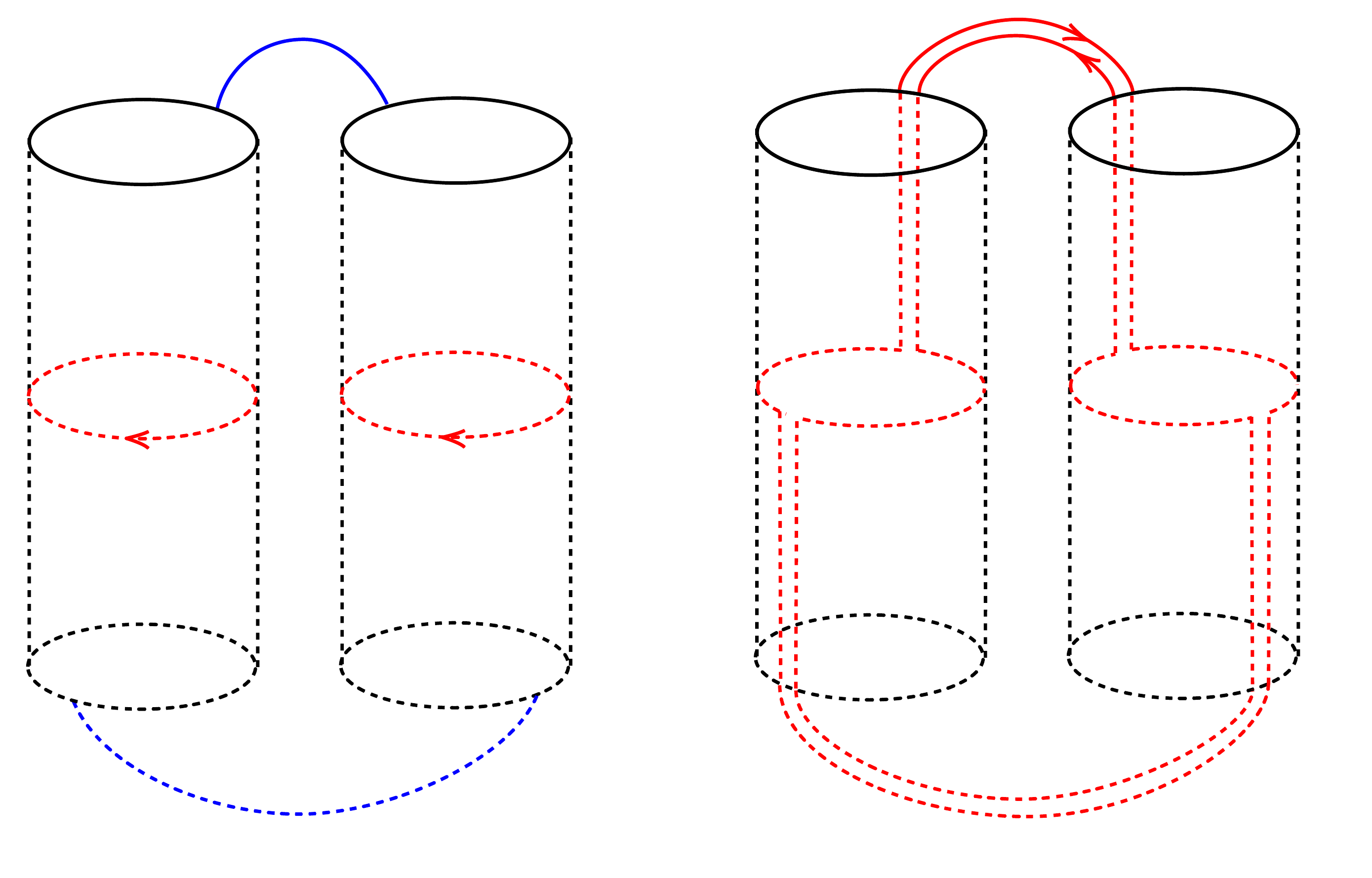}
    \put(7,23){$N(a_0)$}
    \put(30,23){$N(a_1)$}

    \put(43,15){$R_{-}(\ga_{T_0})$}
    \put(43,55){$R_{+}(\ga_{T_0})$}

    \put(8,40){$\ga_0$}
     \put(31.5,40){$\ga_1$}

    \put(20,62.5){$\zeta_+$}
    \put(20,7){$\zeta_-$}

    \put(73,63.5){$\Ga_0$}
\end{overpic}
\vspace{-0.3in}
\caption{Left, a sketch of the balanced sutured manifold $(M_{T_0},\ga_{T_0})$ with the arcs $\zeta_+$ and $\zeta_-$; right, the suture $\Ga_0$.}\label{fig: the new Ga_0}
\end{figure}

Now, we pick a family of sutures $\Ga_n$ on $M_{T_0}$ such that $\Ga_n$ is the suture obtained from $\ga_{T_0}$ by replacing $\ga_0$ and $\ga_1$ by other two curves. We describe the two new curves as follows. For $\Ga_0$, the new curves are depicted in the right subfigure of Figure \ref{fig: the new Ga_0}. Note that the part of the new curves in $R_{\pm}(\ga_{T_0})$ consists of two parallel copies of $\zeta_{\pm}$, respectively. The suture $\Ga_n$ is obtained from $\Ga_0$ by Dehn twists along $-\ga_1$ for $n$ times, as shown in the left subfigure of Figure \ref{fig: the new Ga_n}.

\begin{figure}[ht]
\centering
\begin{overpic}[width=0.7\textwidth]{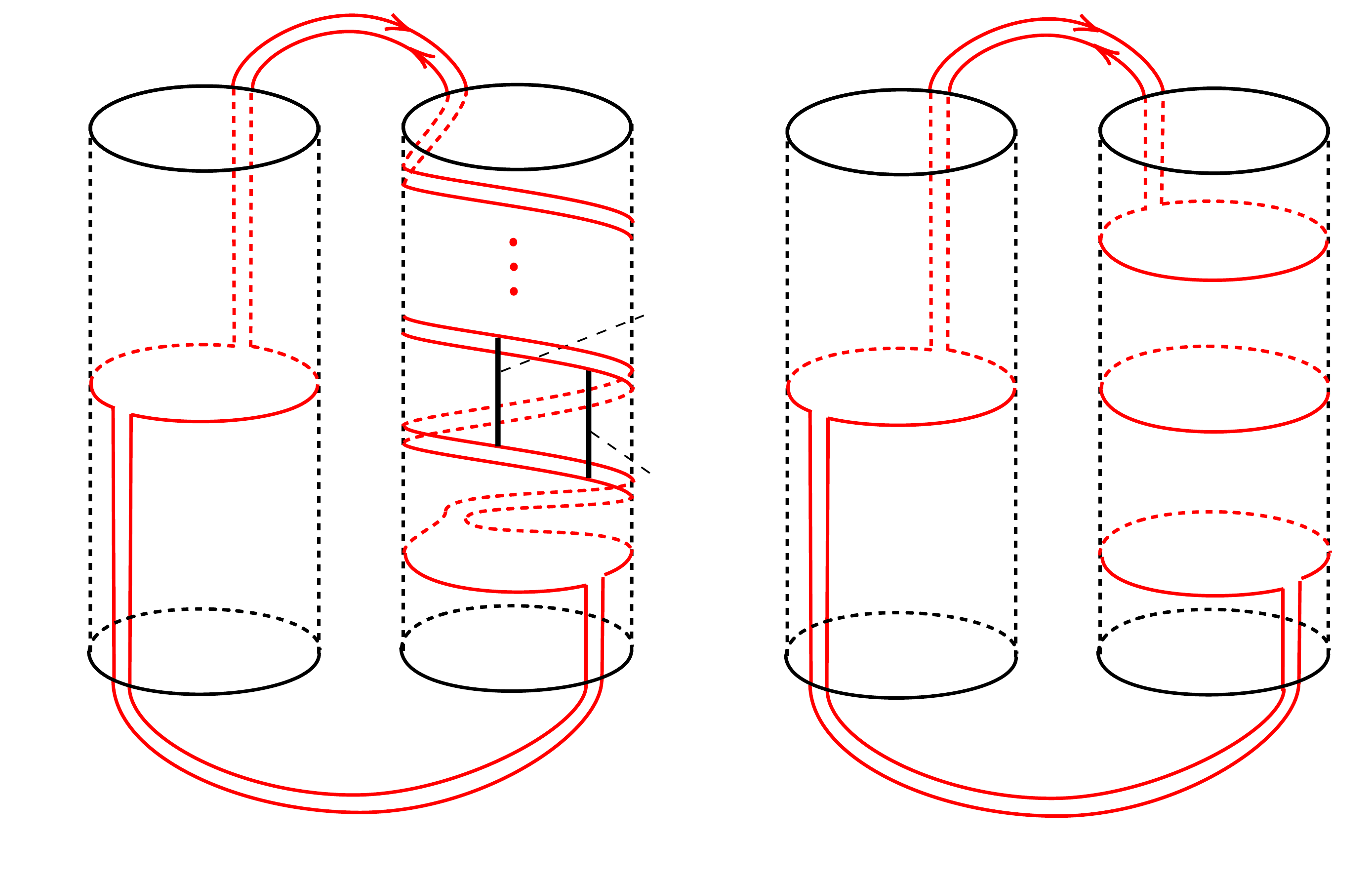}
    \put(48,28){$\eta_+$}
    \put(48,41){$\eta_-$}

    \put(47,15){$R_{-}(\ga_{T_0})$}
    \put(47,55){$R_{+}(\ga_{T_0})$}

    \put(23,64){$\Ga_{n}$}
    \put(75,64){$\Ga_+$}
\end{overpic}
\vspace{-0.3in}
\caption{Left, the suture $\Ga_n$, and the bypass arcs $\eta_+$ and $\eta_-$; right, the suture $\Ga_+$.}\label{fig: the new Ga_n}
\end{figure}

There are two obvious bypass arcs in the left-subfigure of Figure \ref{fig: the new Ga_n}, denoted by $\eta_+$ and $\eta_-$, respectively. By Theorem \ref{thm_2: bypass exact triangle on general sutured manifold}, these two bypass arcs induce two bypass exact triangles:
\begin{equation}\label{eq_3: bypass triangle, +}
    \xymatrix@R=6ex{
    \shi(-M_{T_0},-\Ga_{n-1})\ar[rr]^{\psi_{+,n}^{n-1}}&&\shi(-M_{T_0},-\Ga_{n})\ar[dl]^{\psi_{+,+}^{n}}\\
    &\shi(-M_{T_0},-\Ga_{+})\ar[ul]^{\psi_{+,n-1}^{+}}&
    }
\end{equation}
and
\begin{equation}\label{eq_3: bypass triangle, -}
    \xymatrix@R=6ex{
    \shi(-M_{T_0},-\Ga_{n-1})\ar[rr]^{\psi_{-,n}^{n-1}}&&\shi(-M_{T_0},-\Ga_{n})\ar[dl]^{\psi_{-,-}^{n}}\\
    &\shi(-M_{T_0},-\Ga_{-})\ar[ul]^{\psi_{-,n-1}^{-}}&
    }
\end{equation}
respectively, where $\Ga_{-}$ is the same as the original suture $\ga_{T_0}$, and $\Ga_+$ is the suture as depicted in the right subfigure of Figure \ref{fig: the new Ga_n}.

Note that the bypasses are attached to $\eta_+$ and $\eta_-$ from the exterior of the 3-manifold $M_{T_0}$, though the point of view in Figure \ref{fig: the new Ga_n} is from the interior of the manifold. Hence readers have to take extra care when performing these bypass attachments.

{\bf Step 2}. We use bypass maps and bypass triangles to derive a dimension inequality about $M_{T_0}$.

Recall we have a properly embedded surface $S_T\subset M_T$. Since $a_0\cap S_T=\emptyset$, we can regard $S_T$ as a properly embedded surface in $M_{T_0}$. Let $S_T$ be oriented so that the orientation of $\partial S_T$ coincides with that of $a$ ($=a_1$). Note that $\zeta_+$ and $\zeta_-$ are both disjoint from $S_T$. We can perform stabilizations on $S_T$ so that the followings hold.
\begin{enumerate}[(1)]
\item $S_T$ is admissible with respect to the suture $\Ga_-$ ($=\ga_{T_0}$).
\item For any $n\in \mathbb{N}\cup\{-,+\}$, $S_T$ has minimal possible number of intersection points with the part of the suture $\Ga_n\backslash(\ga\cup\ga_2\cup\dots\cup\ga_m).$
\item $S_T$ is disjoint from $\zeta_+\cup\zeta_-$.
\end{enumerate}

The surface after stabilizations is still denoted by $S_T$. After obtaining the surface $S_T$ satisfying the above three conditions, we further perform stabilizations as follows. For any $n\in\mathbb{N}\cup\{-,+\}$, if $S_T$ is already admissible with respect to $\Ga_n$, then let $S_n$ be the surface $S_T$ without any further change. If $S_T$ is not admissible with respect to $\Ga_n$, then we perform a negative stabilization on $S_T$ within $C_1$ to make it admissible, and write $S_n$ for the resulting surface. Equivalently, define a map
$$\tau:\mathbb{N}\cup\{-,+\}\mapsto \{0,-1\}$$
\begin{equation*}
\tau(n)=\left\{
\begin{array}{cl}
	0& {\rm If~}|S_T\cap\Ga_n\cap C_1|~{\rm is~odd},\\
	-1& {\rm If~}|S_T\cap\Ga_n\cap C_1|~{\rm is~even}.
\end{array}
\right.
\end{equation*}
Then we take $S_n=S_T^{\tau(n)}$, where the superscript follows Definition \ref{defn_2: stabilization of surfaces}. Note that all the stabilizations are with respect to $\Ga_n$ rather than $-\Ga_n$.

By Theorem \ref{thm_2: grading on SHI}, for any $n\in \mathbb{N}\cup\{-,+\}$, there is a closure $(Y_n,R_n,\omega_n)$ for $(M_{T_0},\Ga_n)$ so that $S_n$ extends to a closed surface $\bar{S}_n$. Let
\begin{equation*}
i^n_{max}=-\frac{1}{2}\chi(\bar{S}_n), ~{\rm and}~i^n_{min}= \frac{1}{2}\chi(\bar{S}_n)-\tau(n).
\end{equation*}

\begin{lem}\label{lem_3: top and bottom nontrivial gradings}
	If $i>i^{n}_{max}$ or $i<i^n_{min}$, then
	$\shi(-M_{T_0},-\Ga_n,S_n,i)=0.$
\end{lem}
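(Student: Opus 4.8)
The plan is to split the assertion into the upper range $i>i^n_{max}$ and the lower range $i<i^n_{min}$, and to treat the lower range separately according to whether $\tau(n)=0$ or $\tau(n)=-1$. The upper bound, and ``most'' of the lower bound, I would extract directly from the adjunction inequality that is already encoded in the grading: by the first statement of Theorem \ref{thm_2: grading on SHI} the surface $S_n$ extends to a closed surface $\bar S_n$ in the closure $(Y_n,R_n,\omega_n)$, and term (1) of that theorem gives $\shi(-M_{T_0},-\Ga_n,S_n,i)=0$ whenever $|i|>-\tfrac12\chi(\bar S_n)$. Since $i^n_{max}=-\tfrac12\chi(\bar S_n)$, this already kills every grading with $i>i^n_{max}$, and it also kills every grading with $i<\tfrac12\chi(\bar S_n)$. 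In particular, when $\tau(n)=0$ one has $i^n_{min}=\tfrac12\chi(\bar S_n)$, so the lemma is established in that case.

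It remains to treat $\tau(n)=-1$, where $i^n_{min}=\tfrac12\chi(\bar S_n)+1$, so I must additionally kill the single grading $i=\tfrac12\chi(\bar S_n)$. I would first reflect it: by term (3) of Theorem \ref{thm_2: grading on SHI}, $\shi(-M_{T_0},-\Ga_n,S_n,\tfrac12\chi(\bar S_n))\cong\shi(-M_{T_0},-\Ga_n,-S_n,-\tfrac12\chi(\bar S_n))$, and $-\tfrac12\chi(\bar S_n)$ is precisely the top grading index for $-S_n$. Next, by construction $S_n=S_T^{-1}$ is a negative stabilization of $S_T$ \emph{with respect to $\Ga_n$}; since the grading is taken with respect to the suture $-\Ga_n$, Remark \ref{rem_2: pm switches according to orientations of the suture} turns this into a positive stabilization with respect to $-\Ga_n$, and reflecting the surface orientation as well shows that $-S_n$ is a negative stabilization of $-S_T$ with respect to $-\Ga_n$. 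Then term (2) of Theorem \ref{thm_2: grading on SHI} identifies $\shi(-M_{T_0},-\Ga_n,-S_n,-\tfrac12\chi(\bar S_n))$ with $\shi(M',\ga')$, where $(-M_{T_0},-\Ga_n)\stackrel{-S_n}{\leadsto}(M',\ga')$. By Lemma \ref{lem_2: stabilization and decomposition}(2) the decomposition along a negative stabilization is not taut (indeed $R_\pm(\ga')$ both become compressible), so by the tautness detection of $\shi$ (Theorem \ref{thm_2: SHI detects tautness}) we get $\shi(M',\ga')=0$, hence $\shi(-M_{T_0},-\Ga_n,S_n,\tfrac12\chi(\bar S_n))=0$, which completes this case.

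I expect the main obstacle to be the careful bookkeeping of the positive/negative stabilization conventions through the two orientation reversals — of the suture, because the grading is taken with respect to $-\Ga_n$ while the stabilizations defining $S_n$ are taken with respect to $\Ga_n$, and of the surface, when passing from $S_n$ to $-S_n$ — and indeed it is exactly this combination of reversals that accounts for the asymmetry $i^n_{min}-\tfrac12\chi(\bar S_n)=-\tau(n)$. A secondary point requiring a line of care is that Theorem \ref{thm_2: SHI detects tautness} is stated for irreducible $M$, so one should observe that $M'$ is irreducible: as an unoriented surface $S_n$ is isotopic to $S_T$ in $M_{T_0}$, so cutting along $-S_n$ yields the same underlying $3$-manifold as cutting along $-S_T$, up to the trivial compressible collar introduced by the stabilization, and irreducibility then follows; after this, the non-tautness genuinely forces $\shi(M',\ga')=0$.
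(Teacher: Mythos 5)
Your proof is correct and essentially the paper's own argument: term (1) of Theorem \ref{thm_2: grading on SHI} disposes of $i>i^{n}_{max}$ and of all $i<i^{n}_{min}$ except, when $\tau(n)=-1$, the single grading $i=i^{n}_{min}-1=\frac{1}{2}\chi(\bar{S}_n)$, which you then kill exactly as the paper does, by reflecting with term (3), recognizing the relevant surface as a negative stabilization with respect to $-\Ga_n$, and applying Lemma \ref{lem_2: stabilization and decomposition} together with term (2) of Theorem \ref{thm_2: grading on SHI}. The only superfluous (and unjustified) point is your aside that $M'$ is irreducible --- $M$ itself need not be irreducible, so this does not follow --- but it is also not needed, since the vanishing being invoked is that $\shi$ vanishes once the decomposition makes $R_{\pm}(\ga')$ compressible, which is precisely what the paper uses.
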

\bpf
If $\tau(n)=0$, then the lemma follows directly from term (1) of Theorem \ref{thm_2: grading on SHI}. If $\tau(n)=-1$, then term (1) of Theorem \ref{thm_2: grading on SHI} only implies that for $i<i_{min}^n-1$,
$$\shi(-M_{T_0},-\Ga_n,S_n,i)=0.$$
For the remaining case where $i=i_{min}^n-1$, from term (3) of Theorem \ref{thm_2: grading on SHI}, we have
$$\shi(-M_{T_0},-\Ga_n,S_n,i)=\shi(-M_{T_0},-\Ga_n,-S_n,-i)=\shi(-M_{T_0},-\Ga_n,-S_n,-\frac{\chi(\bar{S}_n)}{2}).$$
From the construction of $S_n$, we know that $-S_n$ is obtained from $S_T$ by a negative stabilization with respect to the suture $-\Ga_n$. Hence we can apply Lemma \ref{lem_2: stabilization and decomposition} and term (2) of Theorem \ref{thm_2: grading on SHI} to obtain the vanishing result.
\epf

\brem
\textit{A priori}, we do not know if $\shi$ is non-vanishing at the gradings $i_{max}^n$ and $i^{n}_{min}$, though this does not make any difference in the proof of Proposition \ref{prop_3: dimension inequality for tangles}.
\erem

Next, we will derive a graded version of bypass exact triangles (\ref{eq_3: bypass triangle, +}) and (\ref{eq_3: bypass triangle, -}). To do so, we will discuss more about the surface $S_n$. Since $\partial S_T$ contains $q$ copies of $a_1$, for any $n\in\mathbb{N}$, we have
$$|S_T\cap \Ga_+\cap C_1|=q,~|S_T\cap \Ga_-\cap C_1|=3q,~{\rm and}~|S_T\cap \Ga_n\cap C_1|=(2n+1)q.$$
From Theorem \ref{thm_2: grading on SHI}, we know that for any $n\in\mathbb{N}$,
\begin{equation}\label{eq_3: indices, 1}
	\chi(\bar{S}_-)=\chi(\bar{S}_+)-q+\tau(-)~{\rm and}~\chi(\bar{S}_n)=\chi(\bar{S}_+)-nq+\tau(n).
\end{equation}
From Lemma \ref{lem_3: top and bottom nontrivial gradings}, we have
\begin{equation}\label{eq_3: indices, 3}
	\lim_{n\ra+\infty}i^n_{max}=+\infty~{\rm and}~\lim_{n\ra+\infty}i^n_{min}=-\infty.
\end{equation}
To present the grading shifting property better, we make the following definition.
\bdefn\label{defn_3: shifting the grading}
Suppose $(M,\ga)$ is a balanced sutured manifold and $S$ is an admissible surface in $(M,\ga)$. For any $i,j\in\intg$, define
$$\shi(M,\ga,S,i)[j]=\shi(M,\ga,S,i-j).$$
\edefn
\blem\label{lem_3: graded bypass triangle}
For any $n\in\mathbb{N}$, we have two exact triangles
\begin{equation*}\label{eq_6: graded bypass, +}
\xymatrix{
\shi(-M_{T_0},-{\Ga}_{n},S_n)[{i}_{min}^{n+1}-{i}_{min}^{n}]\ar[rr]^{\quad\quad\psi^{n}_{+,n+1}}&&\shi(-M_{T_0},-{\Ga}_{n+1},S_{n+1})\ar[dll]^{\psi^{n+1}_{+,+}}\\
\shi(-M_{T_0},-{\Ga}_{+},S_{+})[{i}_{max}^{n+1}-{i}_{max}^{+}]\ar[u]^{\psi^{+}_{+,n}}&&
}
\end{equation*}
and
\begin{equation*}\label{eq_3: graded bypass, -}
\xymatrix{
\shi(-M_{T_0},-{\Ga}_{n},S_{n})[{i}_{max}^{n+1}-{i}_{max}^n]\ar[rr]^{\quad\quad\psi^{n}_{-,n+1}}&&\shi(-M_{T_0},-{\Ga}_{n+1},S_{n+1})\ar[dll]^{\psi^{n+1}_{-,-}}\\
\shi(-M_{T_0},-{\Ga}_{-},S_{-})[{i}_{min}^{n+1}-{i}_{min}^{-}]\ar[u]^{\psi^{-}_{-,n}}&&
}.
\end{equation*}
Furthermore, all maps in the above two exact triangles are grading preserving.
\elem

\begin{proof}
We only prove the grading shifting behavior of the map $\psi_{+,+}^{n+1}$ in the triangle (\ref{eq_3: bypass triangle, +}), and the proof for any other map is similar. For simplicity, we also assume that $\tau(n)=\tau(+)=0$ (Note that $\tau(+)=0$ by definition), and other cases are similar. From Subsection \ref{subsec: general bypass}, we know bypass maps are constructed via contact handle maps and ultimately via cobordisms maps associated to Dehn surgeries (\textit{c.f.} \cite[Section 3]{baldwin2016instanton}). It is obvious that the bypass arc is disjoint from $\partial S_{n+1}$. By construction of the grading in Theorem \ref{thm_2: grading on SHI}, this implies that the following map is grading preserving:
$$\psi_{+,+}^{n+1}:\shi(-M_{T_0},-\Ga_{n+1},S_{n+1})\ra\shi(-M_{T_0},-\Ga_{+},S_{n+1}).$$
From Figure \ref{fig: the new Ga_n with bypass attached}, it is straightforward to check that $S_{n+1}$ is obtained from $S_+$ by
$$\frac{1}{2}(|S_{n+1}\cap \Ga_+\cap C_1|-|S_+\cap \Ga_+\cap C_1|)=2(i^{n+1}_{max}-i^{+}_{max})$$
negative stabilizations, with respect to the suture $\Ga_+$. Hence they become positive stabilizations with respect to $-\Ga_+$. Then the grading shift follows from Theorem \ref{thm_2: grading shifting property}.
\begin{figure}[ht]
\centering
\begin{overpic}[width=0.7\textwidth]{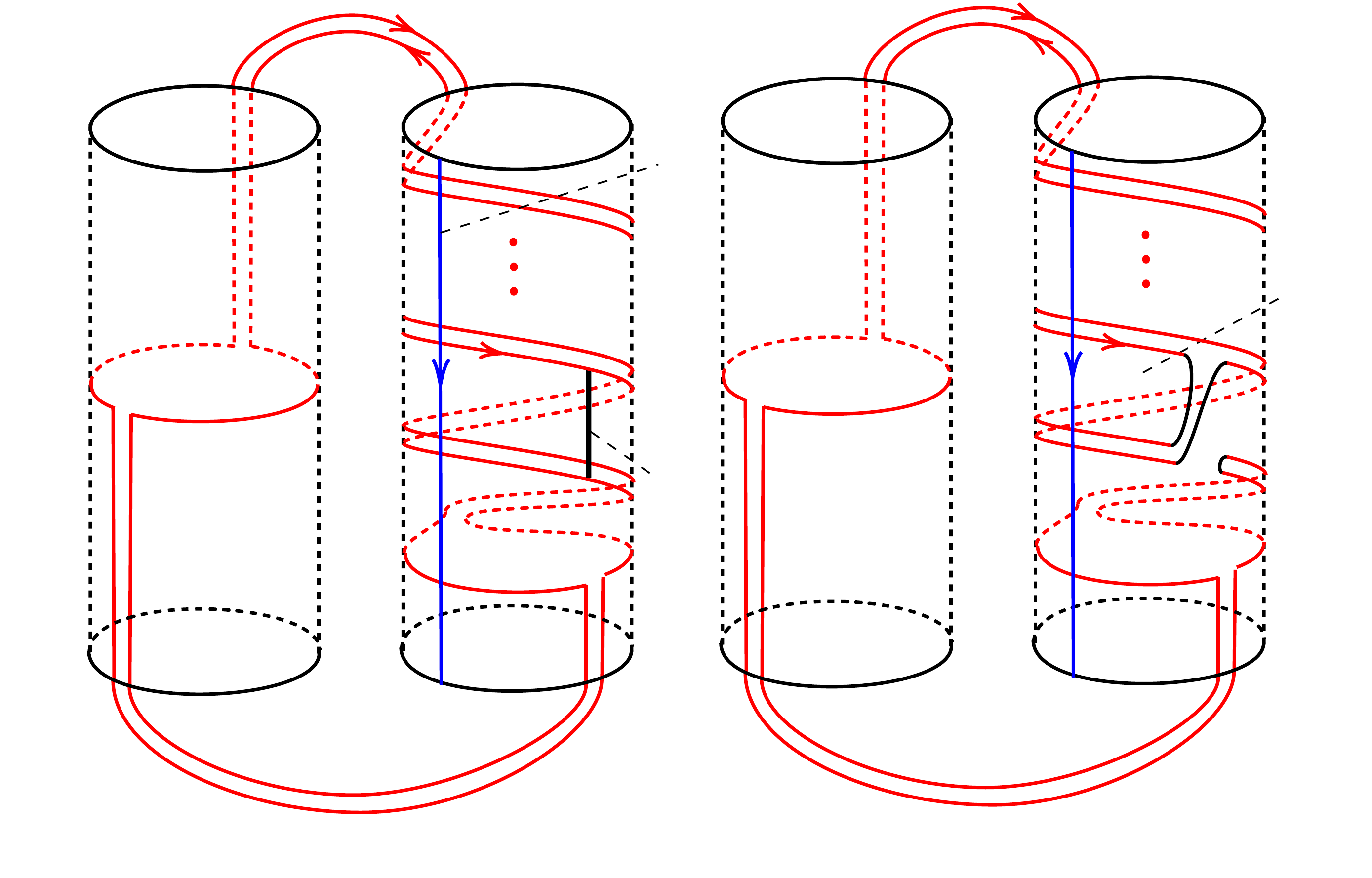}
    \put(48,28){$\eta_+$}
    \put(47,51){$\partial S_n$}
    \put(94,42.5){Positive}
    \put(94,39.5){Stabilizations}
    \put(44,10){$R_{-}(\ga_{T_0})$}
    \put(44,60){$R_{+}(\ga_{T_0})$}

    \put(23,64){$\Ga_{n}$}
    \put(75,64){$\Ga_+$}
\end{overpic}
\vspace{-0.3in}
\caption{Left, the suture $\Ga_n$, the surface $S_n$, and the bypass $\eta_+$; right, the suture $\Ga_+$ after the bypass attachment along $\eta_+$.}\label{fig: the new Ga_n with bypass attached}
\end{figure}
\end{proof}

\brem\label{block}
The statement of Lemma \ref{lem_3: graded bypass triangle} can be illustrated in Figure \ref{fig: illustration_of_lemma}, where sutured instanton homologies are denoted by the related sutures and maps are denoted by horizontal arrows. The heights of the blocks depend on $i_{max}-i_{min}$. This illustration is also useful for statements in Section \ref{sec: Instanton theory and knots}.
\erem
\begin{figure}[ht]
\centering
\includegraphics[width=0.7\textwidth]{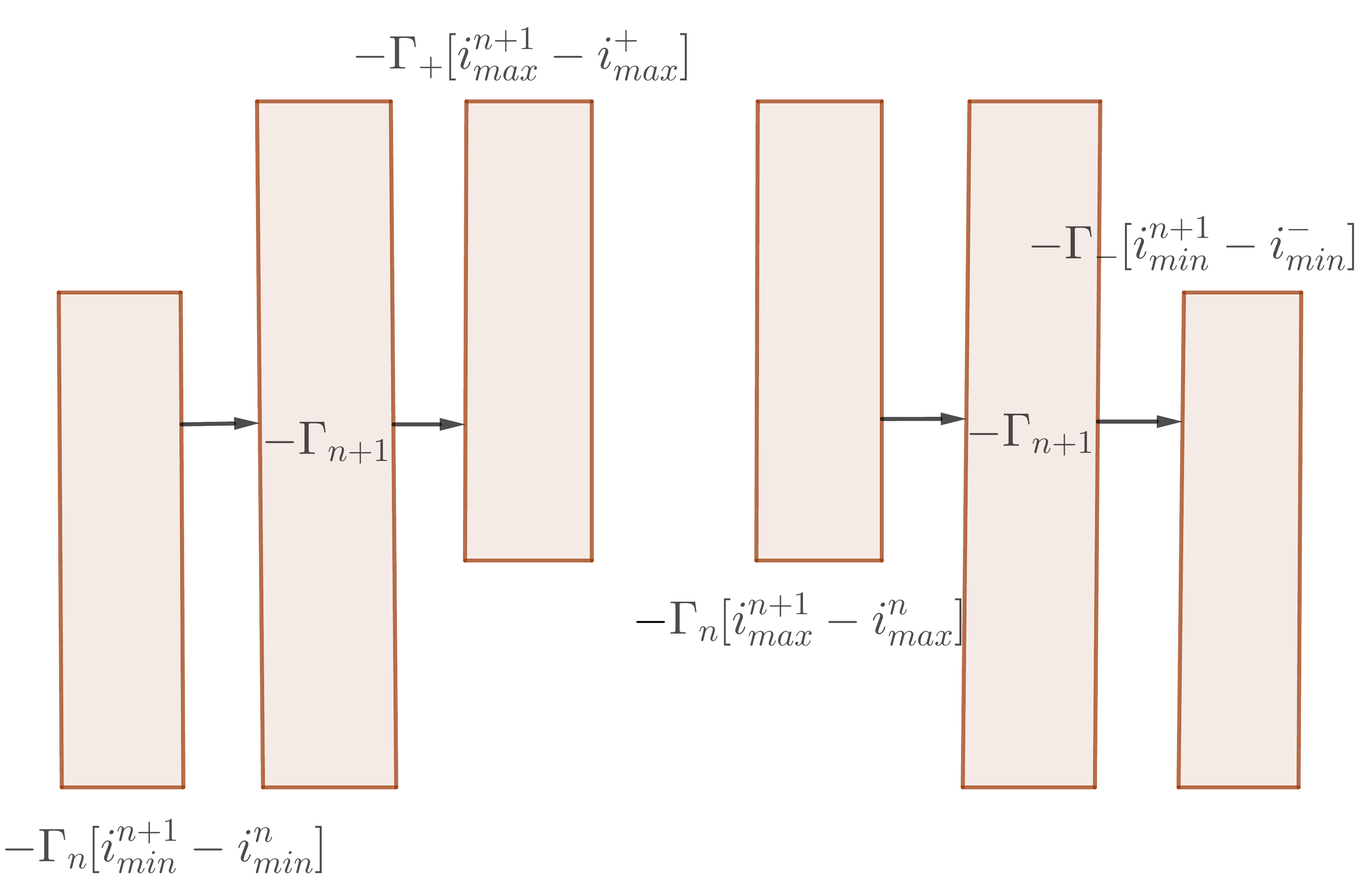}
\caption{Illustration of Lemma \ref{lem_3: graded bypass triangle}.}
\label{fig: illustration_of_lemma}
\end{figure}
Equipped with Lemma \ref{lem_3: graded bypass triangle}, we are able to prove the following lemma. For any $i\in\intg,n\in\mathbb{N}$, let $\psi_{\pm,n+1}^{n,i}$ be the restriction of $\psi_{\pm,n+1}^{n}$ on the $i$-th grading associated to $S_n$.

\begin{lem}\label{lem_3: iso at particular gradings}
	The map
$$\psi_{+,n+1}^{n,j}:\shi(-M_{T_0},-\Ga_n,S_n,j)\ra\shi(-M_{T_0},-\Ga_{n+1},S_{n+1},j-(i^{n}_{min}-i^{n+1}_{min}))$$
is an isomorphism if
$$j<i^{n+1}_{max}+(i^{n}_{min}-i^{n+1}_{min})-(i^{+}_{max}-i^{+}_{min}).$$
Similarly, the map
$$\psi_{-,n+1}^{n,j}:\shi(-M_{T_0},-\Ga_n,S_n,j)\ra\shi(-M_{T_0},-\Ga_{n+1},S_{n+1},j+(i^{n+1}_{max}-i^{n}_{max}))$$
is an isomorphism if
$$j>i^{n+1}_{min}-(i^{n+1}_{max}-i^{n}_{max})+(i^{-}_{max}-i^{-}_{min}).$$
\end{lem}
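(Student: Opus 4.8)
The plan is to extract the isomorphism statements from the two graded bypass exact triangles in Lemma \ref{lem_3: graded bypass triangle} by locating gradings at which the ``third vertex'' of each triangle vanishes. Concretely, in the first triangle of Lemma \ref{lem_3: graded bypass triangle} the map $\psi_{+,n+1}^{n}$ fits into an exact triangle whose third term is $\shi(-M_{T_0},-\Ga_{+},S_{+})$, suitably grading-shifted by $i_{max}^{n+1}-i_{max}^{+}$. Since every map in that triangle is grading-preserving (this is the ``furthermore'' clause of Lemma \ref{lem_3: graded bypass triangle}), restricting to a fixed grading $j$ associated to $S_n$ again yields an exact triangle of vector spaces. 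Hence $\psi_{+,n+1}^{n,j}$ is an isomorphism onto its target grading as soon as the corresponding grading of $\shi(-M_{T_0},-\Ga_{+},S_{+})[i_{max}^{n+1}-i_{max}^{+}]$ vanishes both in degree $j$ and in the adjacent degree entering/leaving the triangle.

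First I would unwind the grading shift conventions from Definition \ref{defn_3: shifting the grading} and Lemma \ref{lem_3: graded bypass triangle}: the map $\psi_{+,n+1}^{n}$ has source shifted by $i_{min}^{n+1}-i_{min}^{n}$, so on the $j$-th grading of $S_n$ it lands in the $\big(j-(i_{min}^{n}-i_{min}^{n+1})\big)$-th grading of $S_{n+1}$, which matches the statement. Meanwhile the third vertex $\shi(-M_{T_0},-\Ga_{+},S_{+})$ sits at its native gradings, which by Lemma \ref{lem_3: top and bottom nontrivial gradings} are supported in $[i_{min}^{+},i_{max}^{+}]$; with the shift $[i_{max}^{n+1}-i_{max}^{+}]$ applied it becomes supported in $[\,i_{min}^{+}+i_{max}^{n+1}-i_{max}^{+},\ i_{max}^{n+1}\,]$. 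So along the grading-$j$ slice, the contribution of $\shi(-M_{T_0},-\Ga_{+},S_{+})$ to both incoming and outgoing maps of $\psi_{+,n+1}^{n,j}$ vanishes precisely when $j$ lies strictly below $i_{min}^{+}+i_{max}^{n+1}-i_{max}^{+}+(i_{min}^{n}-i_{min}^{n+1})$; rearranging, this is the stated bound $j<i^{n+1}_{max}+(i^{n}_{min}-i^{n+1}_{min})-(i^{+}_{max}-i^{+}_{min})$. Then exactness forces $\psi_{+,n+1}^{n,j}$ to be an isomorphism at that grading. The argument for $\psi_{-,n+1}^{n,j}$ is entirely symmetric, using the second triangle of Lemma \ref{lem_3: graded bypass triangle}, whose third vertex is $\shi(-M_{T_0},-\Ga_{-},S_{-})$ shifted by $i_{min}^{n+1}-i_{min}^{-}$; here the relevant support is at the \emph{top} of the grading range, giving the lower bound on $j$.

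The one point requiring care — and the main obstacle — is bookkeeping the direction of grading shifts consistently between source and third vertex, since the two appear with \emph{different} shifts ($i_{min}^{n+1}-i_{min}^{n}$ for the source versus $i_{max}^{n+1}-i_{max}^{+}$ for the third vertex), and the stabilization parities $\tau(n),\tau(n+1),\tau(\pm)$ introduce $\pm 1$ corrections via the definitions of $i^n_{max},i^n_{min}$. I would handle this by first treating the generic case $\tau\equiv 0$ (as in the proof of Lemma \ref{lem_3: graded bypass triangle}), where $i^n_{max}=-\tfrac12\chi(\bar S_n)$ and $i^n_{min}=\tfrac12\chi(\bar S_n)$, verify the inequality cleanly using \eqref{eq_3: indices, 1}, and then observe that a negative stabilization shifts only $i_{min}$ by one, which is already absorbed into the notation $i^{n}_{min}$ on both sides of the claimed bound, so the inequality is unchanged. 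A secondary subtlety is that I must check the grading slice of the triangle is exact as a triangle of \emph{three} terms, i.e. that the grading-$j$ part of the third vertex enters both as the cokernel-controlling term and (at the shifted grading $j\pm 1$ or wherever the next map sits) as the kernel-controlling term; since all three maps preserve grading after the shifts, this is automatic, but it is what pins down the strict inequality rather than a non-strict one.
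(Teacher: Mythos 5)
Your proposal is correct and follows essentially the same route as the paper: restrict the graded bypass triangle of Lemma \ref{lem_3: graded bypass triangle} to the relevant grading, observe via Lemma \ref{lem_3: top and bottom nontrivial gradings} that the shifted $\Ga_{+}$ (resp. $\Ga_{-}$) term vanishes there under the stated bound on $j$, and conclude by exactness; the paper phrases this as showing $\psi_{+,+}^{n+1,i}=0$ (and the analogous incoming map) at the corresponding grading, which is the same computation you carry out. Your grading bookkeeping and the resulting inequality agree with the paper's, so no gap.
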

\bpf
We only prove the first statement. The proof of the second argument is similar. Suppose
$$i=j-(i^{n+1}_{min}-i^{n+1}_{min}).$$
Then we know that there is a map
$$\psi_{+,+}^{n+1,i}:\shi(-M_{T_0},-\Ga_{n+1},S_{n+1},i)\ra \shi(-M_{T_0},-\Ga_{+},S_{+},i-i^{n+1}_{max}+i^{+}_{max}).$$
By assumption, we have
\beq
i-i^{n+1}_{max}+i^{+}_{max}&=j-(i^{n}_{min}-i^{n+1}_{min})-i^{n+1}_{max}+i^{+}_{max}\\
&<i^{n+1}_{max}+(i^{n}_{min}-i^{n+1}_{min})-(i^{+}_{max}-i^{+}_{min})-(i^{n}_{min}-i^{n+1}_{min})-i^{n+1}_{max}+i^{+}_{max}\\
&=i^{+}_{min}.
\eeq
Hence it follows from Lemma \ref{lem_3: top and bottom nontrivial gradings} that $\psi_{+,+}^{n+1,i}=0$. By Lemma \ref{lem_3: graded bypass triangle}, the map $\psi_{+,n+1}^{n,j}$ is surjective. The proof of injectivity is similar.
\epf

\blem\label{lem_3: surgery triangle}
For any $n\in\mathbb{N}$, there is an exact triangle
\begin{equation}\label{eq_6: surgery exact triangle}
\xymatrix@R=6ex{
\shi(-M_{T_0},-{\Ga}_{n})\ar[rr]&&\shi(-M_{T_0},-{\Ga}_{n+1})\ar[dl]^{F_{n+1}}\\
&\shi(-M_{T_0'},-\ga_{T_0'})\ar[ul]^{G_n}&
}
\end{equation}

Furthermore, we have two commutative diagrams related to $\psi^{n}_{+,n+1}$ and $\psi^{n}_{-,n+1}$, respectively
\begin{equation*}
\xymatrix@R=6ex{
\shi(-M_{T_0},-{\Ga}_{n})\ar[rr]^{\psi_{\pm,n+1}^n}&&\shi(-M_{T_0},-{\Ga}_{n+1})\\
&\shi(-M_{T_0'},-\ga_{T_0'})\ar[ul]^{G_n}\ar[ur]_{G_{n+1}}&
}
\end{equation*}
\elem
\begin{figure}[ht]
\centering
\begin{overpic}[width=0.7\textwidth]{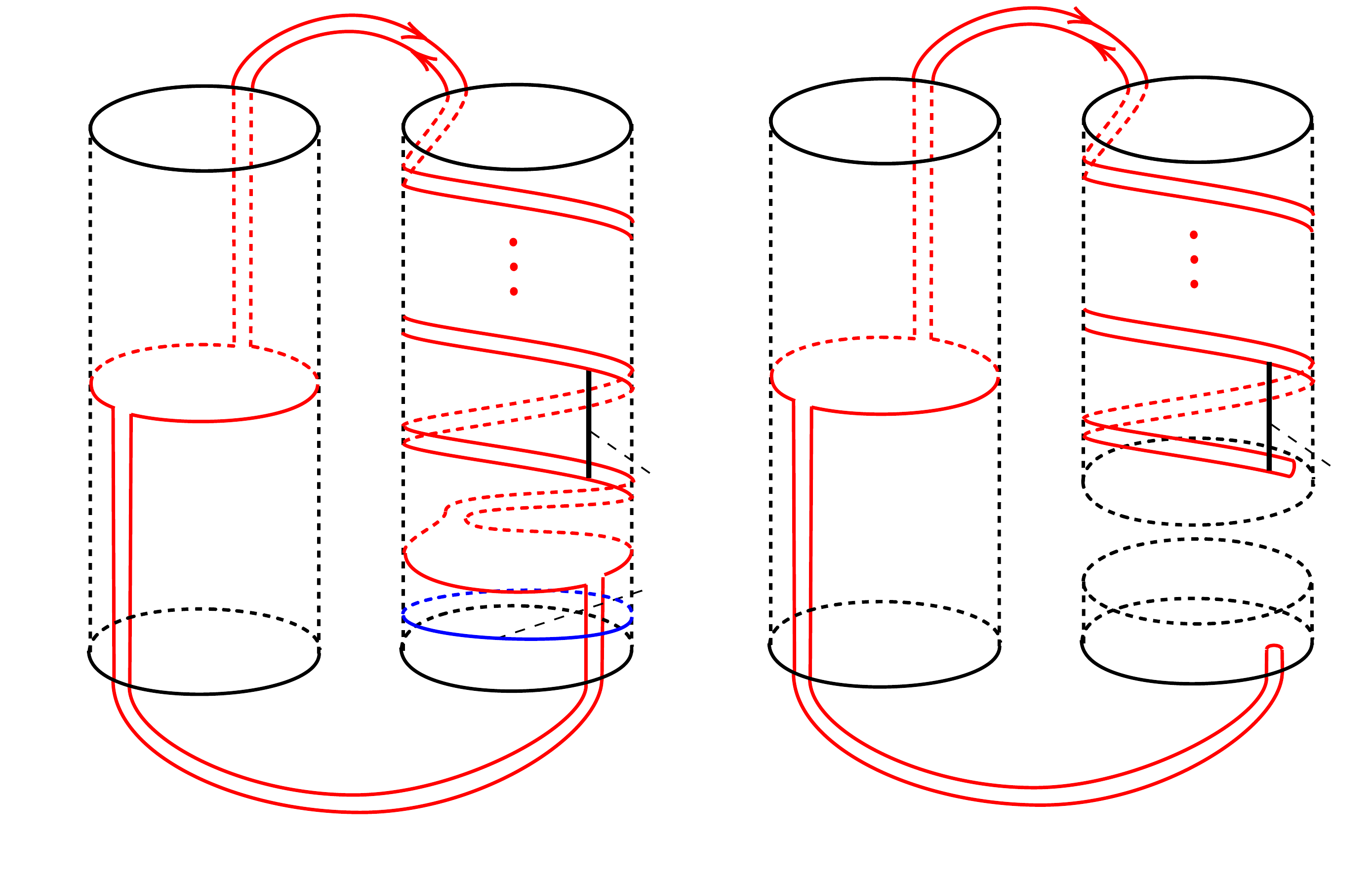}
    \put(48,28){$\eta_+$}
    \put(97,28){$\eta_+'$}
    \put(48,21){$\ga_1$}
    \put(40,60){$M_{T_0}$}
    \put(57,60){$M_{T^{\p}_0}$}

    \put(23,64){$\Ga_{n}$}
    \put(75,65){$\ga_{T^{\p}_0}$}
\end{overpic}
\vspace{-0.3in}
\caption{Left, the suture $\Ga_n$, the meridian $\ga_1$, and the bypass $\eta_+$; right, the balanced sutured manifold $(M_{T^{\p}_0},\ga_{T^{\p}_0})$ after attaching a contact 2-handle along $\ga_1$.}\label{fig: the new Ga_n with 2-handle attached}
\end{figure}
\bpf
Let $\ga_1^\p$ be the curve obtained by pushing $\ga_1$ into the interior of $M_{T_0}$, with the framing from $\partial M_{T_0}$. The $(0,1,\infty)$-surgery triangle associated to $\ga_1^\p$ is the following.

\begin{equation*}
\xymatrix@R=6ex{
((-M_{T_0})_{1},-\Ga_{n+1})\ar[rr]&&((-M_{T_0})_{\infty},-\Ga_{n+1})\ar[dl]\\
&((-M_{T_0})_{0},-\Ga_{n+1})\ar[ul]&
}
\end{equation*}
 Since $\ga_1^\p$ is in the interior of $M_{T_0}$, the surgeries do not influence the procedure of constructing closures of balanced sutured manifolds. Hence from Theorem \ref{thm_2: Floer's exact triangle} we have an exact triangle
\begin{equation*}
\xymatrix@R=6ex{
\shi((-M_{T_0})_{1},-\Ga_{n+1})\ar[rr]&&\shi((-M_{T_0})_{\infty},-\Ga_{n+1})\ar[dl]\\
&\shi((-M_{T_0})_{0},-\Ga_{n+1})\ar[ul]&
}
\end{equation*}
The $\infty$-surgery does not change anything, so
$$((-M_{T_0})_{\infty},-\Ga_{n+1})\cong (-M_{T_0},-\Gamma_{n+1}).$$
The 1-surgery is equivalent to a Dehn twist along $\ga_1^\p$. It does not change the underlying 3-manifold, while the suture $\Ga_{n+1}$ is replaced by $\Ga_n$:
$$((-M_{T_0})_{1},-\Ga_{n+1})\cong(-M_{T_0},-\Gamma_{n}).$$
Finally, for the $0$-surgery, from \cite[Section 3.3]{baldwin2016instanton}, we know that on the level of closures, performing a $0$-surgery is equivalent to attaching a contact 2-handle along $\ga_1\subset\partial M_{T_0}$. Attaching such a contact 2-handle changes $(M_{T_0},\Ga_{n+1})$ to $(M_{T_0'},\ga_{T_0'})$. Hence we obtain the desired exact triangle.

To prove two commutative diagrams, first note that the curve $\ga_{1}^\p$ is disjoint from the bypass arc $\eta_+$. As a result, the related maps commute with each other:
$$\psi_{+,n+1}^n\circ G_n=G_{n+1}\circ \psi_{\eta_+'},$$
where $\eta_+'$ is the bypass arc as shown in the right subfigure of Figure \ref{fig: the new Ga_n with 2-handle attached}. It is straightforward to check that the bypass along $\eta_+'$ is a trivial bypass, and hence from \cite[Section 2.3]{honda2002gluing} it does not change the contact structure. From Subsection \ref{subsec: general bypass}, the bypass maps can be reinterpreted as contact gluing maps, and by the functoriality of instanton contact gluing maps in \cite{li2018gluing}, we know that the bypass map $\psi_{\eta_+}$ corresponding to the trivial bypass $\eta_+$ is the identity map. Hence we conclude that
$$\psi_{+,n+1}^n\circ G_n=G_{n+1}\circ \psi_{\eta_+'}=G_{n+1}\circ {\rm id}=G_{n+1}.$$
The other commutative diagram involving $\psi_{-,n+1}^n$ can be proved similarly.
\epf
\begin{lem}\label{lem_3: G_n is zero}
	For a large enough integer $n$, the map $G_n$ in Lemma \ref{lem_3: surgery triangle} is zero.
\end{lem}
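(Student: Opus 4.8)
The plan is to show that $\mathrm{im}(G_n)=0$ for all sufficiently large $n$, using the two commutative squares of Lemma \ref{lem_3: surgery triangle} together with the grading-shift data of Lemmas \ref{lem_3: graded bypass triangle} and \ref{lem_3: iso at particular gradings}; I will not need exactness of the surgery triangle (\ref{eq_6: surgery exact triangle}) itself. First I observe that both commutative diagrams of Lemma \ref{lem_3: surgery triangle} have $G_{n+1}$ as their lower-right leg, so $\psi_{+,n+1}^n\circ G_n=G_{n+1}=\psi_{-,n+1}^n\circ G_n$, whence $(\psi_{+,n+1}^n-\psi_{-,n+1}^n)\circ G_n=0$ and $\mathrm{im}(G_n)\subseteq\ker(\psi_{+,n+1}^n-\psi_{-,n+1}^n)$. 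It therefore suffices to prove that this kernel vanishes once $n$ is large.

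To that end I decompose $\shi(-M_{T_0},-\Ga_n)$ along the $\intg$-grading associated with $S_n$ (Theorem \ref{thm_2: grading on SHI}). By Lemma \ref{lem_3: iso at particular gradings}, $\psi_{+,n+1}^n$ carries the grading-$j$ summand into the grading-$(j-s_+)$ summand of $\shi(-M_{T_0},-\Ga_{n+1})$ with $s_+=i^n_{min}-i^{n+1}_{min}$, while $\psi_{-,n+1}^n$ raises the grading by $s_-=i^{n+1}_{max}-i^n_{max}$; a short computation from (\ref{eq_3: indices, 1}) gives $s_++s_-=q\ge 1$. Writing a kernel element as $y=\sum_j y_j$ and comparing graded pieces in $(\psi_{+,n+1}^n-\psi_{-,n+1}^n)(y)=0$ yields $\psi_{+,n+1}^n(y_{m+s_+})=\psi_{-,n+1}^n(y_{m-s_-})$ for every $m\in\intg$. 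Applying this at the top nonzero grading $J$ with $m=J+s_-$ makes the left-hand side involve $y_{J+q}=0$, so $\psi_{-,n+1}^n(y_J)=0$; since Lemma \ref{lem_3: iso at particular gradings} renders $\psi_{-,n+1}^{n,J}$ injective once $J$ exceeds a threshold that (by (\ref{eq_3: indices, 1})) equals $J_-^n:=i^n_{min}-\chi(\bar{S}_+)$, we conclude $J\le J_-^n$. The symmetric argument at the bottom nonzero grading $J'$ (take $m=J'-s_+$, so the right-hand side involves $y_{J'-q}=0$) gives $\psi_{+,n+1}^n(y_{J'})=0$, and hence $J'\ge J_+^n:=i^n_{max}+q+\chi(\bar{S}_+)$.

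Since $J'\le J$, a nonzero $y$ would force $J_+^n\le J_-^n$, i.e. $i^n_{max}-i^n_{min}\le -2\chi(\bar{S}_+)-q$. But $i^n_{max}-i^n_{min}=-\chi(\bar{S}_n)+\tau(n)=nq-\chi(\bar{S}_+)$ by (\ref{eq_3: indices, 1}), which tends to $+\infty$ because $q\ge 1$; so for every $n$ with $(n+1)q>-\chi(\bar{S}_+)$ the inequality fails, forcing $y=0$. Hence $\ker(\psi_{+,n+1}^n-\psi_{-,n+1}^n)=0$ and $G_n=0$ for all such $n$, which proves the lemma.

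The substantive part of the argument is the grading bookkeeping: confirming $s_++s_-=q$, and checking that the two "defect windows" — the gradings near $i^n_{max}$, respectively near $i^n_{min}$, where $\psi_{+,n+1}^n$, respectively $\psi_{-,n+1}^n$, may fail to be an isomorphism — have widths bounded independently of $n$ (by $|\chi(\bar{S}_+)|+q$), whereas the full grading range $[i^n_{min},i^n_{max}]$ has length $nq-\chi(\bar{S}_+)$ growing linearly in $n$, so that the two windows become disjoint for large $n$. The one point to state carefully is that both commutative squares of Lemma \ref{lem_3: surgery triangle} share the same map $G_{n+1}$, which is precisely what allows the difference $\psi_{+,n+1}^n-\psi_{-,n+1}^n$ to annihilate $\mathrm{im}(G_n)$.
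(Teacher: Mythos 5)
Your proposal is correct and follows essentially the same route as the paper's proof: both use the two commutative squares of Lemma \ref{lem_3: surgery triangle} to get $\psi_{+,n+1}^n\circ G_n=G_{n+1}=\psi_{-,n+1}^n\circ G_n$, and then exploit the graded bypass triangles (Lemma \ref{lem_3: graded bypass triangle}), the isomorphism ranges of Lemma \ref{lem_3: iso at particular gradings}, and the growth of the grading range from (\ref{eq_3: indices, 1}) and (\ref{eq_3: indices, 3}) to kill any element satisfying $\psi_{+,n+1}^n(y)=\psi_{-,n+1}^n(y)$ for large $n$. The only difference is organizational: you bound both the top and bottom nonzero gradings of such a $y$ and compare the resulting windows, whereas the paper argues by contradiction at the top grading alone after a two-case split, and your grading bookkeeping ($s_++s_-=q$, the thresholds $J_\pm^n$) checks out against the paper's formulas.
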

\bpf
We assume the lemma does not hold and derive a constradiction. For any $n$, there exists $x\in \shi(-M_{T_0'},-\ga_{T_0'})$ such that
$$y=G_n(x)\neq0\in \shi(-M_{T_0},-\Ga_n).$$
Suppose
$$y=\sum_{j\in \intg}y_j,~{\rm where~}y_j\in\shi(-M_{T_0},-\Ga_n,S_n,j),$$
$$j_{max}=\max_{y_j\neq0}j{\rm~and~}j_{min}=\min_{y_j\neq0}j.$$
By assumption $j_{max}$ and $j_{min}$ both exist and $j_{max}\geq j_{min}$.
Suppose
$$z=G_{n+1}(x)\in \shi(-M_{T_0},-\Ga_{n+1}),$$
and similarly
$$z=\sum_{j\in \intg}z_j,~{\rm where~}z_j\in\shi(-M_{T_0},-\Ga_{n+1},S_{n+1},j).$$

From facts (\ref{eq_3: indices, 1}) and (\ref{eq_3: indices, 3}), we know that for a large enough integer $n$, we have
$$i^{n+1}_{max}+(i^{n}_{min}-i^{n+1}_{min})-(i^{+}_{max}-i^{+}_{min})>i^{n+1}_{min}-(i^{n+1}_{max}-i^{n}_{max})+(i^{-}_{max}-i^{-}_{min}).$$
Hence at least one of the following two statements must be true.
\begin{enumerate}[(1)]
\item $j_{max}>i^{n+1}_{min}-(i^{n+1}_{max}-i^{n}_{max})+(i^{-}_{max}-i^{-}_{min})$
\item	 $j_{min}<i^{n+1}_{max}+(i^{n}_{min}-i^{n+1}_{min})-(i^{+}_{max}-i^{+}_{min})$
\end{enumerate}

We only work with the case where the first statement is true, and the other case is similar. From Lemma \ref{lem_3: surgery triangle}, we have
$$z=\psi_{+,n+1}^n(y)=\psi_{-,n+1}^n(y).$$
Suppose
$$i=j_{max}=j_{max}+(i_{max}^n-i^{n+1}_{max}),$$
and
$$j'=i+(i^{n}_{min}-i^{n+1}_{min}).$$
By Lemma \ref{lem_3: graded bypass triangle}, we have
$$\psi_{+,n+1}^{n,j'}(y_{j'})=z_{i}=\psi_{-,n+1}^{n,j_{max}}(y_{j_{max}}).$$
Since $j'>j_{max}$, we have $z_i=0$. By Lemma \ref{lem_3: iso at particular gradings}, the first statement implies $\psi_{-,n+1}^{n,j_{max}}$ is an isomorphism. Hence $y_{j_{max}}=0$, which contradicts the assumpotion of $j_{max}$.
\epf
Suppose $n$ is large enough. By the exact triangle (\ref{eq_6: surgery exact triangle}), the fact that $G_n$ is zero implies
$$\dim_{\mathbb{C}}\shi(-M_{T_0'},-\ga_{T_0'})=\dim_{\mathbb{C}}\shi(-M_{T_0},-\Ga_{n+1})-\dim_{\mathbb{C}}\shi(-M_{T_0},-\Ga_{n}).$$
From the exact triangle (\ref{eq_3: bypass triangle, -}) and the fact that $\Ga_-=\ga_{T_0}$, we have
$$\dim_{\mathbb{C}}\shi(-M_{T_0},-\ga_{T_0})\le\dim_{\mathbb{C}}\shi(-M_{T_0},-\Ga_{n+1})-\dim_{\mathbb{C}}\shi(-M_{T_0},-\Ga_{n}).$$

{\bf Step 3}. We obtain the desired inequality from the above equality and inequality.

Note that $a_0$ is an arc obtained by pushing a neighborhood of $q_l$ in $\zeta$ into the interior of $M_T$, and $M_{T_0}$ is obtained from $M_T$ by removing $N(a_0)$. There is an embedded disk $D$ in $M_{T_0}$ whose boundary is the union of $a_0$ and the neighborhood of $q_l$ in $\zeta$. Moreover, $\partial D$ intersects $\ga_{T_0}$ at two points, one of which is $q_l$ and the other is in $\ga_0$. By \cite[Proof of Proposition 6.9]{kronheimer2010knots}, decomposing the sutured manifold $(M_{T_0},\ga_{T_0})$ along $D$ does not change the isomorphism class of the sutured instanton Floer homology ($D$ is a product disk in the sense of \cite[Definition 2.8]{juhasz2008floer}). It is straightforward to check the sutured manifold after the sutured manifold decomposition is exactly $(M_{T},\ga_{T})$. Then we have
$$\dim_{\mathbb{C}}\shi(-M_{T_0},-\ga_{T_0})=\dim_{\mathbb{C}}\shi(-M_{T},-\ga_{T}).$$Similarly, we have
$$\dim_{\mathbb{C}}\shi(-M_{T_0^\p},-\ga_{T_0^\p})=\dim_{\mathbb{C}}\shi(-M_{T^\p},-\ga_{T^\p}).$$
Thus, we conclude$$\dim_{\mathbb{C}}\shi(-M_{T},-\ga_{T})\le\dim_{\mathbb{C}}\shi(-M_{T^\p},-\ga_{T^\p}).$$
\epf
\bpf[Proof of Theorem \ref{thm_1: from heegaard diagram to SHI}]
Suppose $(H,\ga)=(Y(1)_T,\delta_T)$ and $(Y(K),\be_{g}^\pp\cup\be_{g+1}^\pp)$ are obtained from Construction \ref{cons: meridian sutures}. Note that $\be_{g}^\pp\cup\be_{g+1}^\pp$ are parallel copies of the meridian of $K$. Then we have $$KHI(-Y,K)=\shi(-Y(K),-(\be_{g}^\pp\cup\be_{g+1}^\pp))$$ by Definition \ref{defn_2: framed instanton Floer homology}. Since $Y$ is a rational homology sphere, we have $$H_1(Y(1),\partial Y(1);\mathbb{Q})=0.$$In particular, any component of $T$ has trivial rational homology class. Then the theorem follows from Proposition \ref{prop_3: dimension inequality for tangles} and Theorem \ref{thm_2: isomorphism between SHI with and without tangles}.
\epf
\brem
Suppose $(\Sigma,\al,\be)$ is a Heegaard diagram of a rational homology sphere $Y$ and $K$ is the core knot of $\be_i$ for some $\be_i\subset \be$. Suppose $(H,\ga)=(Y(1)_T,\delta_T)$ is obtained from Construction \ref{cons: heegaard diagram gives rise to sutured handlebody}. Then the proof of Theorem \ref{thm_1: from heegaard diagram to SHI} applies without change, and we conclude the same inequality.
\erem
\bpf[Proof of Proposition \ref{prop: direct result}]
Similar to the proof of Theorem \ref{thm_1: from heegaard diagram to SHI}, since the knot $K$ has trivial rational homology class, the corresponding tangle has trivial homology class in $H_1(Y(1),\partial Y(1);\mathbb{Q}).$
\epf

\bcor\label{cor_3: combinatorial bound}
Suppose $(\Sigma,\al,\be)$ is a genus $g$ Heegaard diagram of a rational homology sphere $Y$. Let $(H,\ga)$ be the sutured handlebody obtained by Construction \ref{cons: heegaard diagram gives rise to sutured handlebody}. Suppose
$$n=\frac{1}{2}\sum_{i=1}^g|\al_i\cap\ga|.$$
Then we have
$${\rm dim}_{\mathbb{C}}I^{\sharp}(-Y)\leq 2^{n-g}.$$
\ecor
\bpf
By Theorem \ref{thm_1: from heegaard diagram to SHI}, we know that
$${\rm dim}_{\mathbb{C}}I^{\sharp}(Y)\leq {\rm dim}_{\mathbb{C}}\shi(-H,-\ga).$$
Hence it suffices to prove that
$$\shi(-M,-\ga)\leq 2^{n-g}.$$
First note that if $n<g$, then there exists an integer $i\in[1,g]$ such that $\al_i\cap\ga=\emptyset.$ Hence $(-H,-\ga)$ is not taut. By Theorem \ref{thm_2: SHI detects tautness} $\shi(-M,-\ga)=0$.

When $g=n$, then either there exists an integer $i\in[1,g]$ such that $\al_i\cap\ga=\emptyset,$ and then $\shi(-M,-\ga)=0$ as above, or for each $i\in\{1,\dots,g\}$, $\al_i$ intersects $\ga$ precisely at two points. Thus the disk $D_i\subset H$ bounded by $\al_i$ is a product disk. Let $D=D_1\cup\dots\cup D_g.$ We can perform a sutured manifold decomposition
$(-H,-\ga)\stackrel{D}{\leadsto}(D^3,\delta),$ where $D^3$ is a 3-ball and $\delta$ is a suture on $\partial D^3$. From Theorem \ref{product}, we know that
$${\rm dim}_{\mathbb{C}}\shi(-H,-\ga)={\rm dim}_{\mathbb{C}}\shi(D^3,\delta)=1.$$

We prove other cases by inductiono on $n$. Assume that the statement has been proved for $g\leq n=k-1$. For the case of $n=k$, we proceed as follows. Since $k>g$, there is a curve $\al_i$ satisfying $\al_i\cap \ga\geq 4$. Suppose $\eta\subset\al_i$ is an arc such that $\partial \eta\subset\ga$, and the interior of $\eta$ intersects with $\ga$ transversely once. By Theorem \ref{thm_2: bypass exact triangle on general sutured manifold}, we have a bypass exact triangle from the bypass attachement along $\eta$:
\begin{equation*}
\xymatrix@R=6ex{
&\shi(-H,-\ga)\ar[dr]&\\
\shi(-H,-\ga_2)\ar[ur]&&\shi(-H,-\ga_1)\ar[ll]\\
}
\end{equation*}
It is straightforward to check that
$$\sum_{i=1}^n|\ga_1\cap\al_i|\leq 2k-2{\rm~and~}\sum_{i=1}^n|\ga_2\cap\al_i|\leq 2k-2.$$
By the induction hypothesis and the exactness, we conclude that
$${\rm dim}_{\mathbb{C}}\shi(-H,-\ga)\leq {\rm dim}_{\mathbb{C}}\shi(-H,-\ga_1)+{\rm dim}_{\mathbb{C}}\shi(-H,-\ga_2)\leq 2^{k-g}.$$
Thus, we complete the induction.
\epf

\begin{exmp}\label{exmp_3: heegaard diagram}
Suppose we have a Heegaard diagram $(\Sigma,\al,\be)$ of $Y$ as in Figure \ref{fig: heegaard diagram, 0}. It is straightforward to check that
$$H_1(Y)=\intg_2\oplus\intg_2.$$We can apply Construction \ref{cons: heegaard diagram gives rise to sutured handlebody} to obtain a sutured handlebody $(H,\ga)$. See Figure \ref{fig: heegaard diagram, 1}. By Theorem \ref{thm_1: from heegaard diagram to SHI}, we know that
\begin{equation}\label{eq_3: heegaard diagram, 0}
{\rm dim}_{\mathbb{C}}I^{\sharp}(-Y)\leq {\rm dim}_{\mathbb{C}}\shi(-H,-\ga).
\end{equation}

It remains to bound $\dim_\mathbb{C}\shi(-H,-\ga)$. If we apply Corollary \ref{cor_3: combinatorial bound} directly, we obtain
$${\rm dim}_{\mathbb{C}}\shi(-H,-\ga)\leq 64.$$
However, we can improve this bound by examining the bypass exact triangles more carefully in the following steps.

\begin{figure}[htbp]
\centering
\begin{minipage}[t]{0.48\textwidth}
\centering
\begin{overpic}[width=7cm]{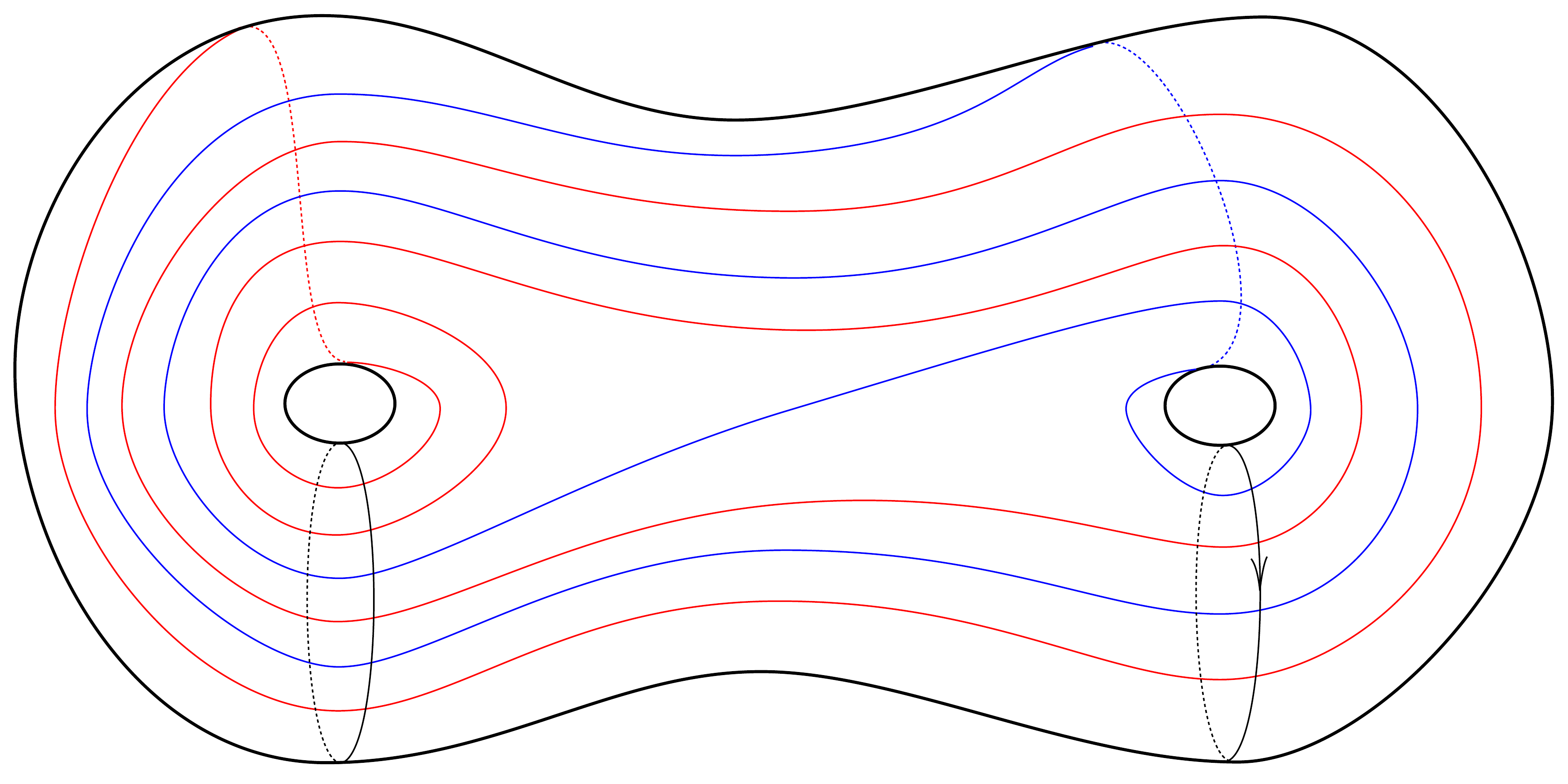}
\put(21,-1){$\al_1$}
\put(78,-1){$\al_2$}
\put(37,27){$\be_1$}
\put(54,22){$\be_2$}
\end{overpic}
\vspace{-0.0in}
\caption{A Heegaard diagram of $Y$.\label{fig: heegaard diagram, 0}}
\end{minipage}
\begin{minipage}[t]{0.48\textwidth}
\centering
\begin{overpic}[width=7cm]{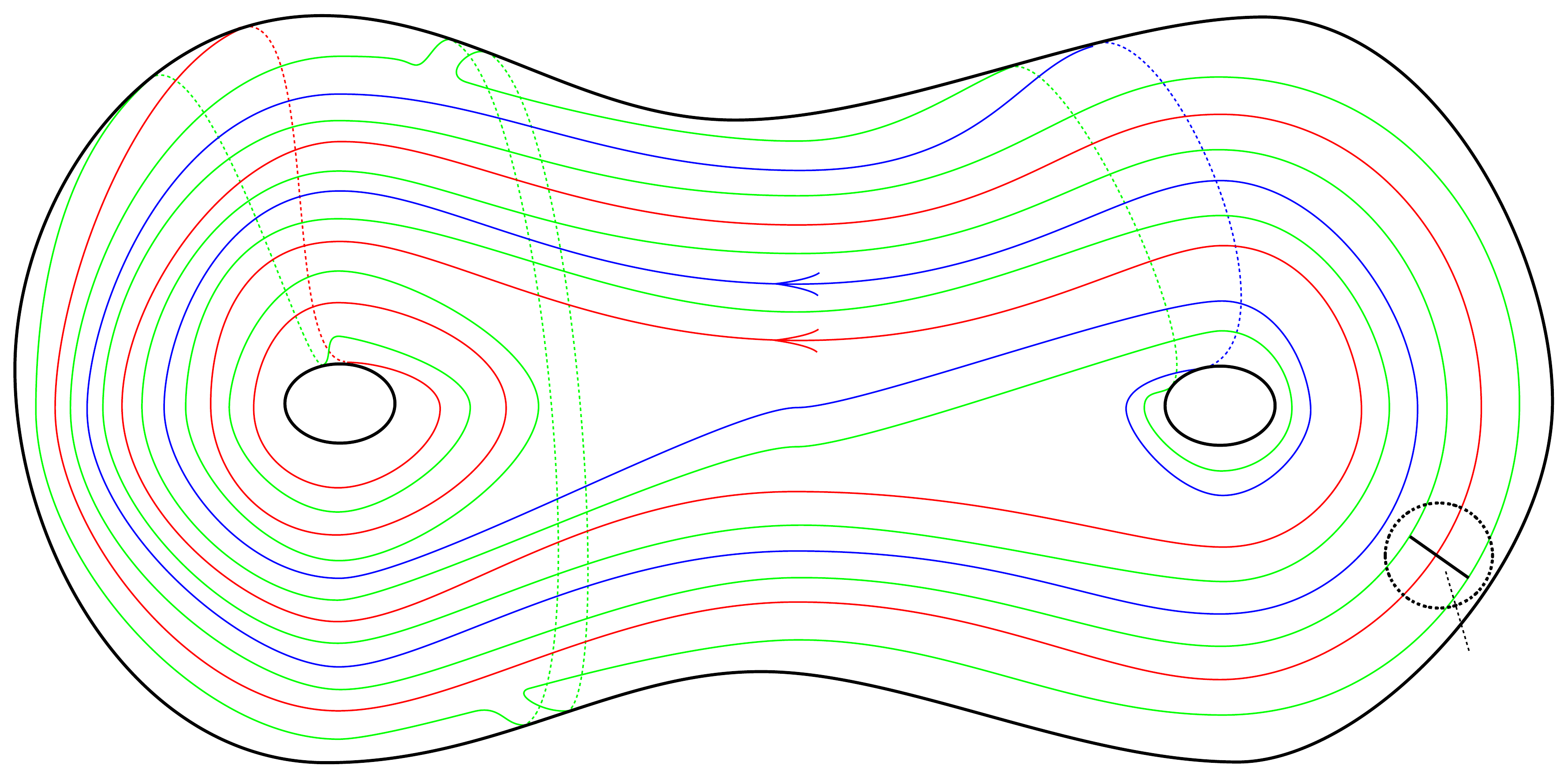}
\put(93,7){$\eta_1$}
\end{overpic}
\vspace{-0.0in}
\caption{The diagram of $(H,\ga)$.\label{fig: heegaard diagram, 1}}
\end{minipage}
\end{figure}
{\bf Step 1}. We can do a bypass attachement along the arc $\eta_1$ as shown in Figure \ref{fig: heegaard diagram, 1}. By Theorem \ref{thm_2: bypass exact triangle on general sutured manifold}, there exists an exact triangle
\begin{equation}\label{eq_3: heegaard diagram, 1}
\xymatrix@R=6ex{
&\shi(-H,-\ga)\ar[dr]&\\
\shi(-H,-\ga_2)\ar[ur]&&\shi(-H,-\ga_1)\ar[ll]\\
}
\end{equation}
The sutures $\ga_1$ and $\ga_2$ are depicted in Figure \ref{fig: heegaard diagram, 2} and Figure \ref{fig: heegaard diagram, 3}, respectively.

\begin{figure}[htbp]
\centering
\begin{minipage}[t]{0.48\textwidth}
\centering
\begin{overpic}[width=7cm]{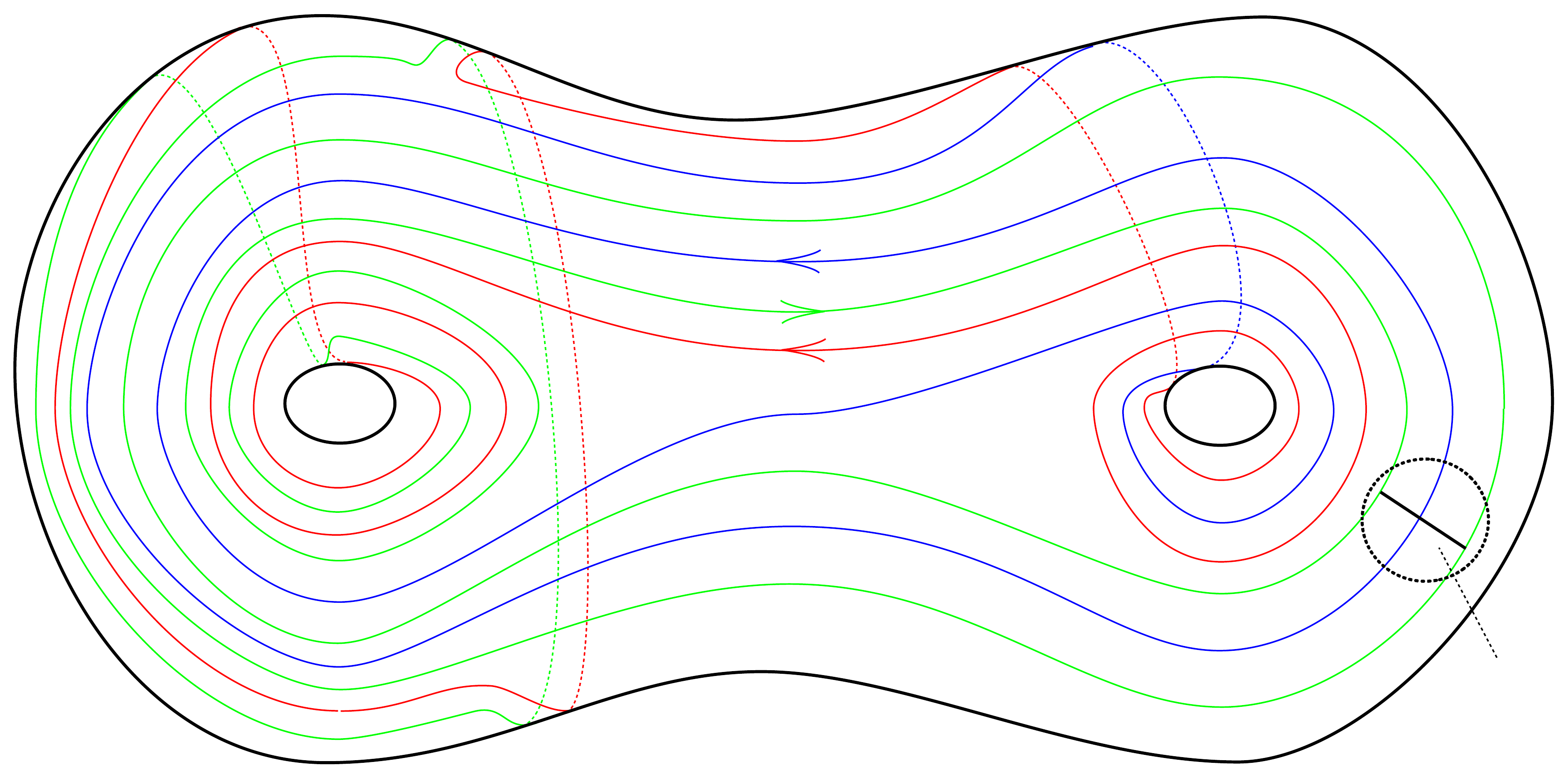}
\put(93,7){$\eta_2$}
\end{overpic}
\vspace{-0.0in}
\caption{The diagram of $(H,\ga_1)$.}\label{fig: heegaard diagram, 2}
\end{minipage}
\begin{minipage}[t]{0.48\textwidth}
\centering
\begin{overpic}[width=7cm]{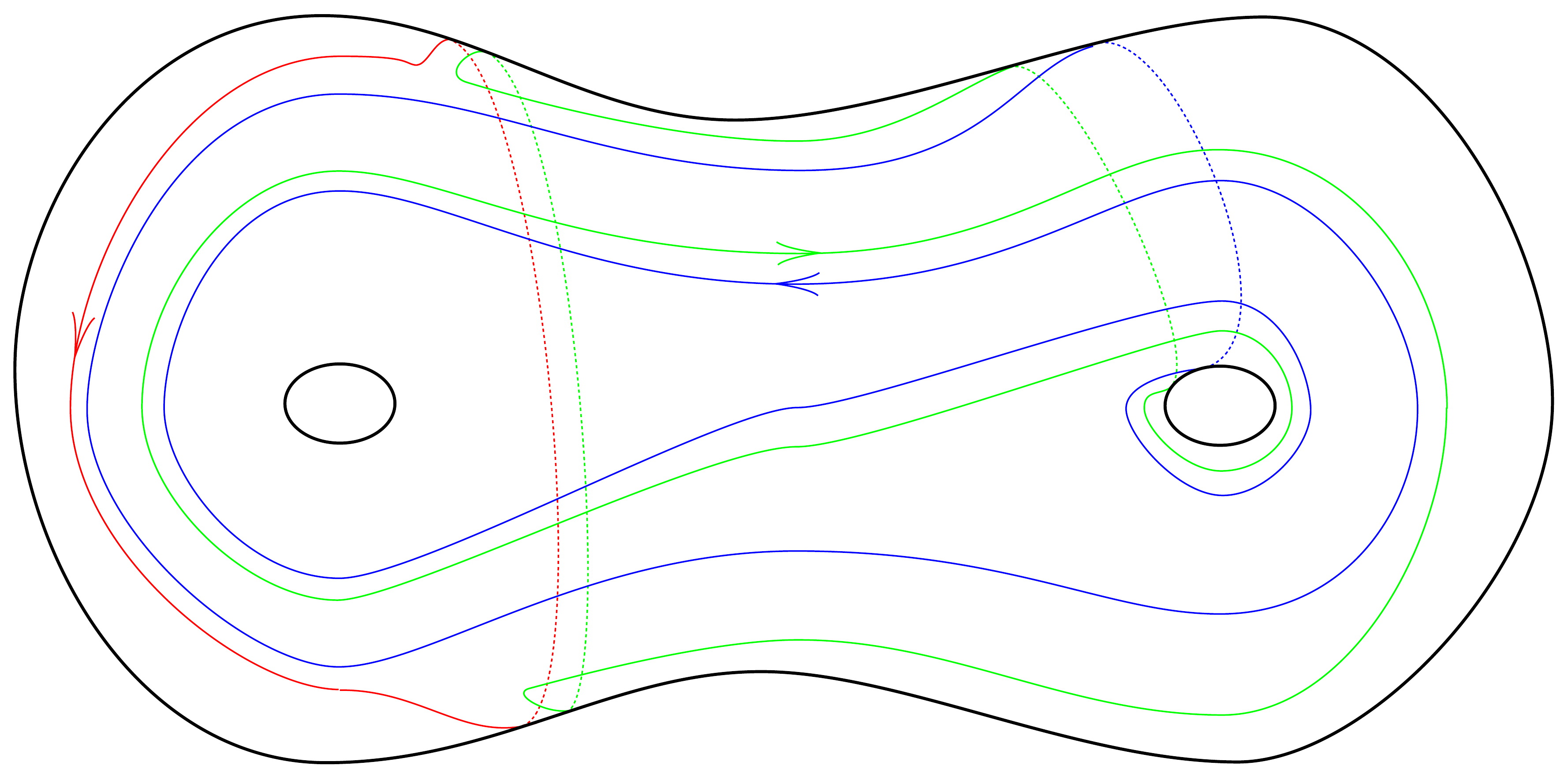}
\end{overpic}
\vspace{-0.0in}
\caption{The diagram of $(H,\ga_2)$.}\label{fig: heegaard diagram, 3}
\end{minipage}
\end{figure}
{\bf Step 2}. We compute $\shi(-H,-\ga_2)$. The curve $\al_2$ bounds a disk $D\subset H$. It then induces a grading on $\shi(-H,-\ga_2)$. Note that $D$ is not admissible in the sense of Definition \ref{defn_2: admissible surfaces}, so we perform a negative stabilization on $D$ as in Definition \ref{defn_2: stabilization of surfaces}, and write $D^-$for the resulting disk. By Theorem \ref{thm_2: grading on SHI}, $\shi(-H,-\ga_2,D^-,i)=0$, for $|i|>1$. We can perform a sutured manifold decomposition $$(-H,-\ga_2)\stackrel{D^-}{\leadsto}(V,\ga_2'),$$where $V$ is a solid torus and $\ga_1'$ is depicted as in Figure \ref{fig: heegaard diagram, 4}. From Theorem \ref{thm_2: grading on SHI} and Theorem \ref{thm_2: SHI detects tautness}, we know that
$$\shi(-H,-\ga_2,D^-,1)\cong \shi(V,\ga_2')=0.$$
By Lemma \ref{lem_2: stabilization and decomposition}, Theorem \ref{thm_2: grading on SHI}, and Theorem \ref{thm_2: SHI detects tautness}, we know
\beq
\shi(-H,-\ga_2,D^-,-1)=&\shi(-H,-\ga_2,-D^-,1)\\
=&\shi(-H,-\ga_2,(-D)^+,1)\\
=&0.
\eeq
By Theorem \ref{thm_2: grading on SHI}, Theorem \ref{thm_2: grading shifting property}, and Theorem \ref{thm_2: SHI detects tautness}, we know
\beq
\shi(-H,-\ga_2,D^-,0)=&\shi(-H,-\ga_2,-D^-,0)\\
=&\shi(-H,-\ga_2,(-D)^+,0)\\
=&\shi(-H,-\ga_2,(-D)^-,1)\\
=&\shi(V,\ga_2'').
\eeq
Here $(V,\ga_2'')$ is obtained from $(-H,-\ga_2)$ by decomposing along $(-D)^-$. $V$ is a solid torus and $\ga_2''$ is depicted as in Figure \ref{fig: heegaard diagram, 5}.
\begin{figure}[htbp]
\centering
\begin{minipage}[t]{0.48\textwidth}
\centering
\begin{overpic}[width=7cm]{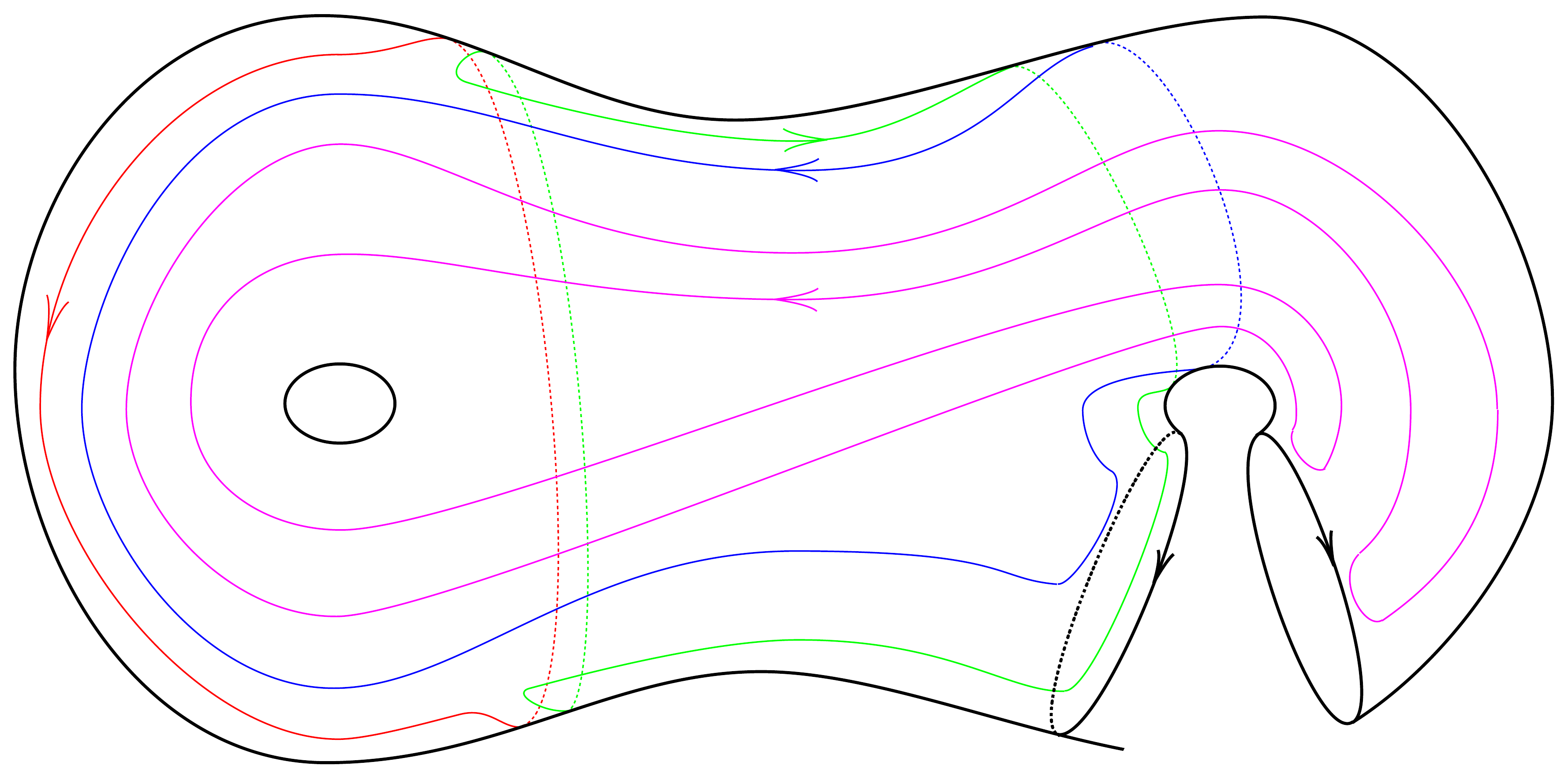}
\end{overpic}
\vspace{-0.0in}
\caption{The diagram of $(V,\ga_1')$.}\label{fig: heegaard diagram, 4}
\end{minipage}
\begin{minipage}[t]{0.48\textwidth}
\centering
\begin{overpic}[width=7cm]{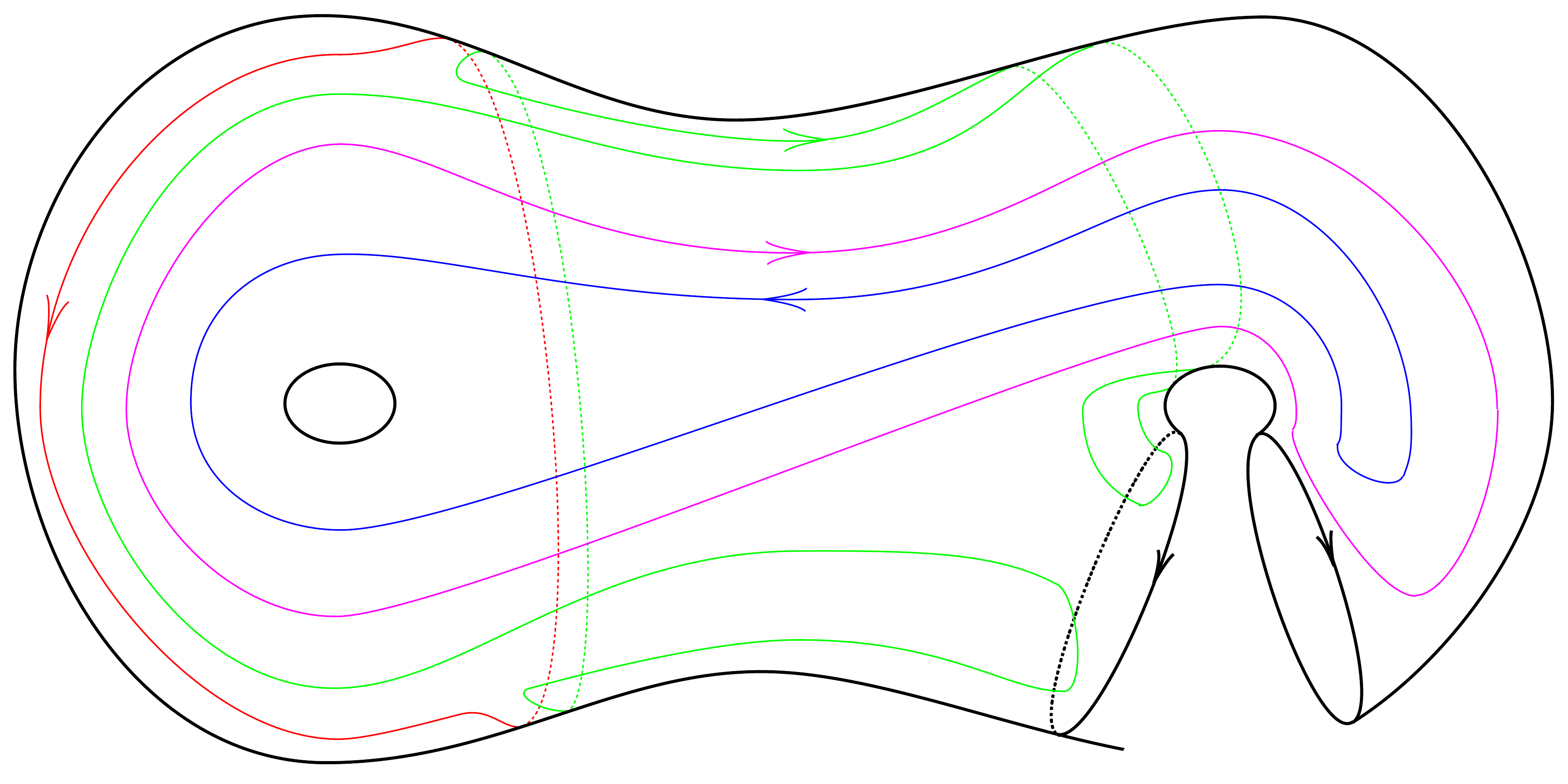}
\end{overpic}
\vspace{-0.0in}
\caption{The diagram of $(V,\ga_2'')$.}\label{fig: heegaard diagram, 5}
\end{minipage}
\end{figure}
From \cite[Proposition 1.4]{li2019direct}, we know that
$$\shi(V,\ga_2'')\cong \mathbb{C}^2.$$
Hence we conclude that
\begin{equation}\label{eq_3: heegaard diagram, 2}
    \shi(-H,-\ga_2)\cong\mathbb{C}^2.
\end{equation}

{\bf Step 3}. We can perform a second bypass along the arc $\eta_2$ as shown in Figure \ref{fig: heegaard diagram, 2} on $(H,\ga_1)$ and obtain an exact triangle
\begin{equation}\label{eq_3: heegaard diagram, 3}
\xymatrix@R=6ex{
&\shi(-H,-\ga_1)\ar[dr]&\\
\shi(-H,-\ga_4)\ar[ur]&&\shi(-H,-\ga_3)\ar[ll]^{\psi_{4}^3}\\
}
\end{equation}

It is straightforward to check that both $\ga_3$ and $\ga_4$ intersect the disk $D$ at four points, so we can compute in the same way as we did for $\shi(-H,-\ga_2)$
\begin{equation}\label{eq_3: heegaard diagram, 4}
    \shi(-H,-\ga_3)\cong\mathbb{C}^3,~{\rm and~}\shi(-H,-\ga_4)\cong\mathbb{C}^5.
\end{equation}

From (\ref{eq_3: heegaard diagram, 0}), (\ref{eq_3: heegaard diagram, 1}), (\ref{eq_3: heegaard diagram, 2}), (\ref{eq_3: heegaard diagram, 3}), and (\ref{eq_3: heegaard diagram, 4}), we know that
$${\rm dim}_{\mathbb{C}}I^{\sharp}(-Y)\leq 10.$$
\end{exmp}

\subsection{The instanton knot homology of $(1,1)$-knots}\label{subsec: $(1,1)$-knots}
\quad

In this subsection, we use Theorem \ref{thm_1: from heegaard diagram to SHI} to prove Theorem \ref{thm_1: KHI of 1,1 knots}.
\bdefn
Suppose $p,q\in\mathbb{Z}$ satisfy $p\ge 1,0\le q<p$ and $\operatorname{gcd}(p,q)=1$. Let $\ti{\al}$ and $\ti{\be}$ be two straight lines in $\mathbb{R}^2$ passing the origin with slopes 0 and $p/q$, respectively, and let $r:\mathbb{R}^2\rightarrow T^2$ be the quotient map induced by $(x,y)\to (x+m,y+n)$ for $m,n\in\mathbb{Z}$. Suppose
$\al=r(\ti{\al})~{\rm and}~ \be= r(\ti{\be}).$ Then the manifold compatible with the Heegaard diagram $(T^2,\al,\be)$ is called a \textbf{lens space} and is denoted by $L(p,q)$. Furthermore, the Heegaard diagram $(T^2,\al,\be)$ is called the \textbf{standard diagram} of the lens space. In particular, we regard $S^3$ as a lens space $L(1,0)$.

The lens space is oriented so that the orientation on the $\al$-handlebody is induced from the standard embedding of $S^1\times D^2$ in $\mathbb{R}^3$. With this convention, the lens space $L(p,q)$ comes from the $p/q$-surgery on the unknot in $S^3$.
\edefn
\bdefn
A proper embedded arc $\eta$ in a handlebody $H$ is called a \textbf{trivial arc} if there is an embedded disk $D\subset H$ satisfying $\partial D=\eta \cup (D\cap \partial H)$. The disk $D$ is called the \textbf{cancelling disk} of $\eta$. A knot $K$ in a closed 3-manifold $Y$ admits a \textbf{(1,1)-decomposition} if the followings hold.
\begin{enumerate}[(1)]
    \item $Y$ admits a splitting $Y= H_1 \cup_{T^2}H_2$ so that $H_1\cong H_2\cong S^1\times D^2$.
    \item $K \cap  H_i$ is a properly embedded trivial arc in $H_i$ for $i\in\{1,2\}$.
\end{enumerate}
In this case, $Y$ is either a lens space or $S^1\times S^2$. A knot $K$ admitting a (1,1)-decomposition is called a \textbf{$(1,1)$-knot}.
\edefn
\bprop[{\cite[Section 6.2]{Rasmussen2005} and \cite[Section 2]{Goda2005}}]\label{oneonepara}
For $p,q,r,s\in\mathbb{N}$ satisfying $2q+r\le p$ and $s<p$, a (1,1)-decomposition of a knot determines and is determined by a doubly-pointed diagram. After isotopy, such a diagram becomes $(T^2,\al,\be,z,w)$ in Figure \ref{fig: from pqrs to 11 diagram}, where $p$ is the total number of intersection points, $q$ is the number of strands around either basepoint, $r$ is the number of strands in the middle band, and the $i$-th point on the right-hand side is identified with the $(i + s)$-th point on the left-hand side.
\eprop
\begin{figure}[htbp]
\centering
\includegraphics[width=6cm]{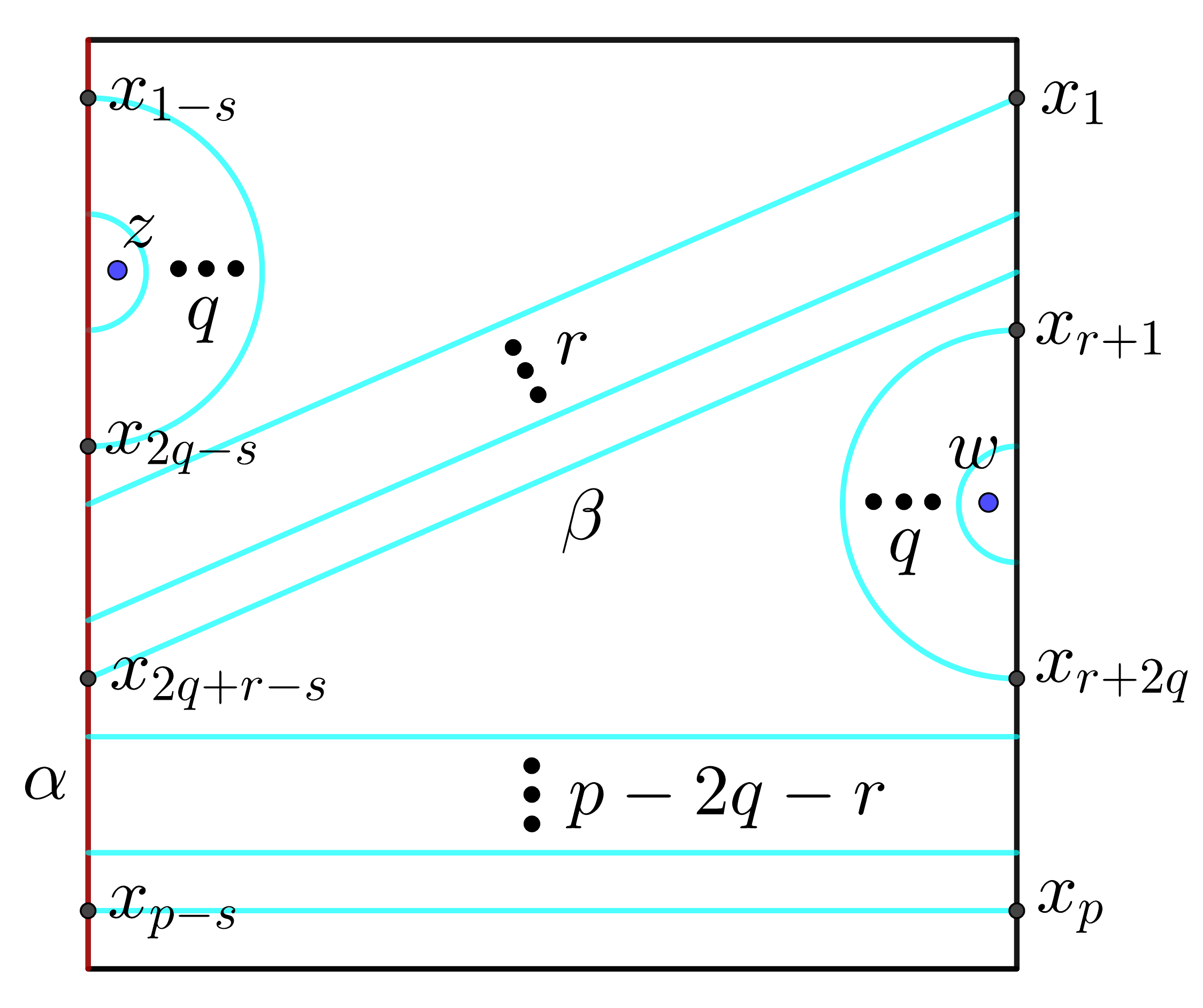}
\caption{(1,1)-diagram.}\label{fig: from pqrs to 11 diagram}
\end{figure}
\bdefn\label{defn: reduced}
A simple closed curve $\be$ on $(T^2,\al,z,w)$ is called {\bf reduced} if the number of intersection points between $\al$ and $\be$ is minimal. The doubly-pointed diagram in Figure \ref{fig: from pqrs to 11 diagram} is called the \textbf{(1,1)-diagram} of type $(p,q,r,s)$, which is denoted by $W(p,q,r,s)$. Strands around basepoints are called \textbf{rainbows} and strands in the bands are called \textbf{stripes}.

If the \textbf{(1,1)-diagram} of $W(p,q,r,s)$ is a Heegaard diagram for some parameters $(p,q,r,s)$, or equivalently, $\be$ has one component and represents a nontrivial homology class in $H_1(T^2)$, then the corresponding knot is also denoted by $W(p,q,r,s)$.

A $(1,1)$-knot whose (1,1)-diagram does not have rainbows is called a \textbf{simple knot} (\textit{c.f.} \cite[Section 2.1]{Rasmussen2007}). For simple knots, let $K(p,q,k)=W(p,0,k,q)$.
\edefn
\begin{prop}\label{mirror}
The mirror knot of a $(1,1)$-knot $W(p,q,r,s)$ is $$W(p,q,p-2q-r,p-s+2q).$$
\end{prop}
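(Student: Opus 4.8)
The plan is to track how mirroring a $(1,1)$-knot affects its $(1,1)$-diagram, working directly with the combinatorial parameters $(p,q,r,s)$ from Proposition \ref{oneonepara}. Recall that a $(1,1)$-decomposition $Y = H_1 \cup_{T^2} H_2$ together with the two trivial arcs and basepoints determines the doubly-pointed Heegaard diagram $(T^2,\al,\be,z,w)$. Mirroring $(Y,K)$ means reversing the orientation of $Y$, which we realize concretely: $-Y$ has a $(1,1)$-decomposition with the same handlebodies $H_1, H_2$ but with the reversed gluing map, and $K$ sits inside as the same pair of trivial arcs. On the level of Heegaard diagrams, reversing the orientation of the ambient manifold corresponds to reversing the orientation of the Heegaard surface $T^2$ (equivalently, reflecting the planar picture of Figure \ref{fig: from pqrs to 11 diagram}), while the roles of $\al$ and $\be$ and the two basepoints are unchanged. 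So the first step is to record precisely this dictionary: $\overline{W(p,q,r,s)}$ is presented by the mirror-image diagram of $W(p,q,r,s)$.

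Next I would read off the new parameters from the reflected diagram. The total number of intersection points $p$ is reflection-invariant, so the first coordinate stays $p$. The key observation is that reflection interchanges "rainbows" (strands encircling a basepoint) with the complementary family of arcs: in $W(p,q,r,s)$ there are $q$ rainbows at each basepoint and $r$ stripes in the middle band, and the remaining $p - 2q - r$ strands form the other rainbow family after reflection. Hence the new number of rainbows is $q$ (this is forced: $\be$ still has $q$ strands going around each basepoint, since reflection is a homeomorphism preserving the basepoints), but the new band count becomes $p - 2q - r$, because the old "outer" strands and the old middle band swap which region they run through. This is exactly the claimed second-and-third coordinate change $(q, r) \mapsto (q, p - 2q - r)$; note that the hypothesis $2q + r \le p$ guarantees $p - 2q - r \ge 0$, so the new diagram is again of the admissible form. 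The gluing shift $s$ transforms to $p - s + 2q$: reflecting the identification "$i$-th point on the right $\leftrightarrow$ $(i+s)$-th point on the left" reverses the cyclic ordering on each side, sending the shift $s$ to its "negative" $p - s$ modulo $p$, and the presence of the $2q$ rainbow strands contributes an extra offset of $2q$ to align the basepoints correctly; combining gives $p - s + 2q$ (read mod $p$, which is the natural ambiguity in the parameter $s$).

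The main obstacle — and where care is needed — is the bookkeeping in the last paragraph: making the reflection-induced relabeling of the $p$ intersection points completely explicit, so that the shift genuinely becomes $p - s + 2q$ and not some other representative, and verifying that the middle-band count is $p - 2q - r$ rather than, say, $p - 2q - r$ off by the orientation conventions for $z$ versus $w$. I would handle this by fixing coordinates on the standard square picture of Figure \ref{fig: from pqrs to 11 diagram}, writing down the reflection $(x,y) \mapsto (-x, y)$ (or $(x,-y)$, whichever matches the paper's drawing convention), and literally computing the images of the labeled points and of the arc system; the trivial-arc condition for $K$ in each handlebody is automatically preserved since reflection carries cancelling disks to cancelling disks. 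Once the relabeling is pinned down, the identity $\overline{W(p,q,r,s)} = W(p,q,p-2q-r,p-s+2q)$ drops out, completing the proof.
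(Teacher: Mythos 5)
Your proposal is correct and follows essentially the same route as the paper: the mirror is presented by the (vertical) reflection of the $(1,1)$-diagram, and after redrawing so that the lower (outer) band and the middle band exchange roles one reads off $(p,q,p-2q-r,p-s+2q)$ directly from the definition, exactly as in the paper's (very terse) argument. One small phrasing slip: after reflection the $p-2q-r$ outer strands become the new \emph{middle band}, not a ``rainbow family''---but you correct this in the next clause, so the substance is fine.
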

\begin{proof}
The Heegaard diagram of the mirror knot of $W(p,q,r,s)$ is obtained by the (1,1)-diagram of $W(p,q,r,s)$ by vertical reflection. We redraw the Heegaard diagram so that the lower band becomes the middle band and the middle band becomes the lower band. This proposition follows from the definition.
\end{proof}
According to \cite[Section 3]{Goda2005} (also \cite[Section 6]{ozsvath2004holomorphicknot}), for the $\widehat{HFK}$ of a $(1,1)$-knot, the generators of the chain complexes are intersection points of $\al$ and $\be$ in the (1,1)-diagram and there is no differential. Thus, the following proposition holds.
\bprop[]\label{prop: rk for HFK}
For a $(1,1)$-knot $K=W(p,q,r,s)$ in $Y$, we have $$\hfk(Y,K)\cong \intg^p.$$
\eprop
We restate Construction \ref{cons: heegaard diagram gives rise to sutured handlebody} more carefully.
\bcons\label{cons: 11 sutured handlebody}
Suppose $(T^2,\al,\be,z,w)$ is the (1,1)-diagram of $W(p,q,r,s)$. We construct a sutured handlebody $(H,\ga)$ as follows, called the {\bf (1,1)-sutured-handlebody} of $W(p,q,r,s)$.

\begin{enumerate}[(1)]
    \item Let $\Sigma$ be the genus-two boundary of the manifold obtained from $[-1,1]\times T^2$ by attaching a 3-dimensional 1-handle along $\{1\}\times\{z,w\}$. For simplicity, when drawing the diagram, the attached 1-handle will still be denoted by two basepoints $z$ and $w$.
    \item Let $\al_1$ and $\be_1$ denote the curves on $\Sigma$ induced from $\al$ and $\be$, respectively. Let $\be$ be oriented so that the innermost rainbow around $z$ is oriented clockwise, which induces an orientation of $\be_1$. If there is no rainbow, let $\be$ be oriented so that each stripe goes from left to right in Figure \ref{fig: from pqrs to 11 diagram}.
    \item Consider the straight arc connecting $z$ to $w$ in Figure \ref{fig: from pqrs to 11 diagram}. It induces a simple closed curve $\al_2$ on $\Sigma$ by going along the 1-handle. Let $\be_{2}$ be the curve on $\Sigma$ induced by a small circle around $z$, oriented counterclockwise.
    \item Let $\ga_1$ and $\ga_2$ be obtained by pushing off $\be_1$ and $\be_2$ to the right with respect to the orientation. Suppose they are oriented reversely with respect to $\be_1$ and $\be_2$, respectively. Let $a_0$ be a straight arc connecting the innermost rainbow of $\be$ around $z$ to the above small circle. It induces an arc connecting $\ga_1$ to $\ga_2$, still denoted by $a_0$. Let $\ga_3$ be obtained by a band sum of $\ga_1$ and $\ga_2$ along $a_1$, with the induced orientation.
    \item Let $H$ be the handlebody compatible with the diagram $(\Sigma, \{\al_1,\al_2\},\emptyset)$ and let $$\ga=\ga_1\cup \ga_2\cup \ga_3.$$Rainbows and stripes are defined similarly for sutures.
\end{enumerate}
\econs

The main goal is to prove the following theorem.
\bthm\label{thm: less1}
Suppose $(H,\ga)$ is the (1,1)-sutured-handlebody of $W(p,q,r,s)$ constructed in Construction \ref{cons: 11 sutured handlebody}. Then we have
$$\dim_{\mathbb{C}}\shi(-H,-\ga)\leq p.$$
\ethm
Before proving this theorem, we first use it to derive Theorem \ref{thm_1: KHI of 1,1 knots}.
\begin{proof}[Proof of Theorem \ref{thm_1: KHI of 1,1 knots}]
Combining Theorem \ref{thm_1: from heegaard diagram to SHI},  Proposition \ref{prop: rk for HFK}, and Theorem \ref{thm: less1}, for a $(1,1)$-knot $K=W(p,q,r,s)$ in a lens space $Y$, we have
$$\dim_{\mathbb{C}}KHI(-Y,K)\leq \dim_{\mathbb{C}}\shi(-H,-\ga)\leq p= {\rm rk}_{\mathbb{Z}}\widehat{HFK}(-Y,K).$$Then the theorem follows from Proposition \ref{mirror}, \textit{i.e.} the mirror knot of a $(1,1)$-knot is still a $(1,1)$-knot with the same intersection number $p$.
\end{proof}

\begin{proof}[Proof of Theorem \ref{thm: less1}]
We prove the theorem by induction on $p$ for any (1,1)-diagram of $W(p,q,r,s)$ where $\be$ has only one component. This includes the case that $\be$ represents a trivial homology class. The induction is based on the bypass exact triangle in Theorem \ref{thm_2: bypass exact triangle on general sutured manifold}. We will show three balanced sutured manifolds in the bypass exact triangle are all (1,1)-sutured handlebodies, where one is the (1,1)-sutured handlebody we want and the other two are (1,1)-sutured handlebodies with smaller number $p$. By straightforward algebra, if the dimension inequality holds for two terms in the bypass exact triangle, then it also holds for the third term.

For the base case, consider $p=1$. The curves $\al_1,\al_2,\be_1,\be_2$ in Construction \ref{cons: 11 sutured handlebody} satisfy$$|\al_1\cap\be_1|=|\al_2\cap \be_2|=1.$$It is straightforward to check $(H,\ga)$ is a product sutured manifold, so is $(-H,-\ga)$. Then Theorem \ref{product} implies $$\dim_\mathbb{C} SHI(-H,-\ga)=1.$$

Now we deal with the case where $p>1$. In Construction \ref{cons: 11 sutured handlebody}, the innermost rainbow around $z$, if exists, is oriented clockwise. Suppose $\delta_1$ is either the innermost rainbow around $z$, or a stripe that is closest to $z$ with $z$ on its right-hand side. Suppose $\delta_2$ is another rainbow or stripe that is closest to $\delta_1$ and is to the left of $\delta_1$. See Figure \ref{fig: local picture} for all possible cases. Compared to Figure \ref{fig: from pqrs to 11 diagram}, we have rotated the square counterclockwise by 90 degrees for the purpose of a better display.

\begin{figure}[ht]
\centering
\includegraphics[width=0.9\textwidth]{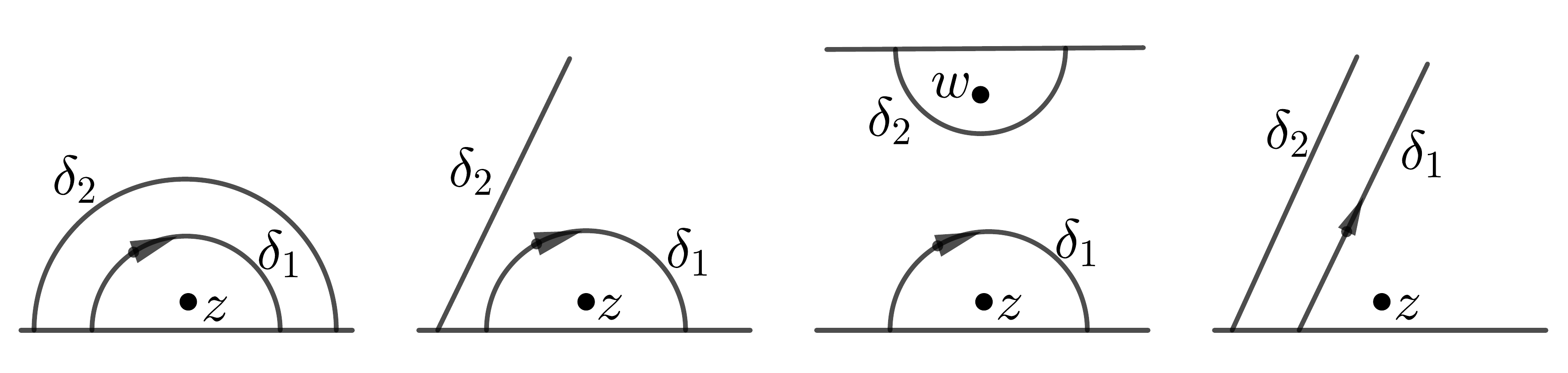}
\caption{Several cases of $\delta_1$ and $\delta_2$.}
\label{fig: local picture}
\end{figure}

We consider two different cases about the orientation of $\delta_2$.

{\bf Case 1}. Suppose $\delta_1$ and $\delta_2$ are oriented parallelly.

We use $W(6,2,1,3)$ shown in Figure \ref{fig: 6213} as an example to carry out the proof, and the general case is similar. In this example, two innermost rainbows around $z$ are oriented parallelly. By construction, the curve $\ga_3$ is parallel (regardless of orientations) to $\ga_1$ outside the neighborhood of the band-sum arc $a_0$. Thus, there exists a unique rainbow of $\ga_3$ between $\delta_1$ and $\delta_2$ around $z$. Let $a_1$ be an anti-wave bypass arc cutting these three rainbows, as shown in Figure \ref{fig: 6213}. Suppose $\ga'$ and $\ga''$ are the other two sutures involved in the bypass triangle associated to $a_2$.

\begin{figure}[ht]
\centering
\includegraphics[width=0.8\textwidth]{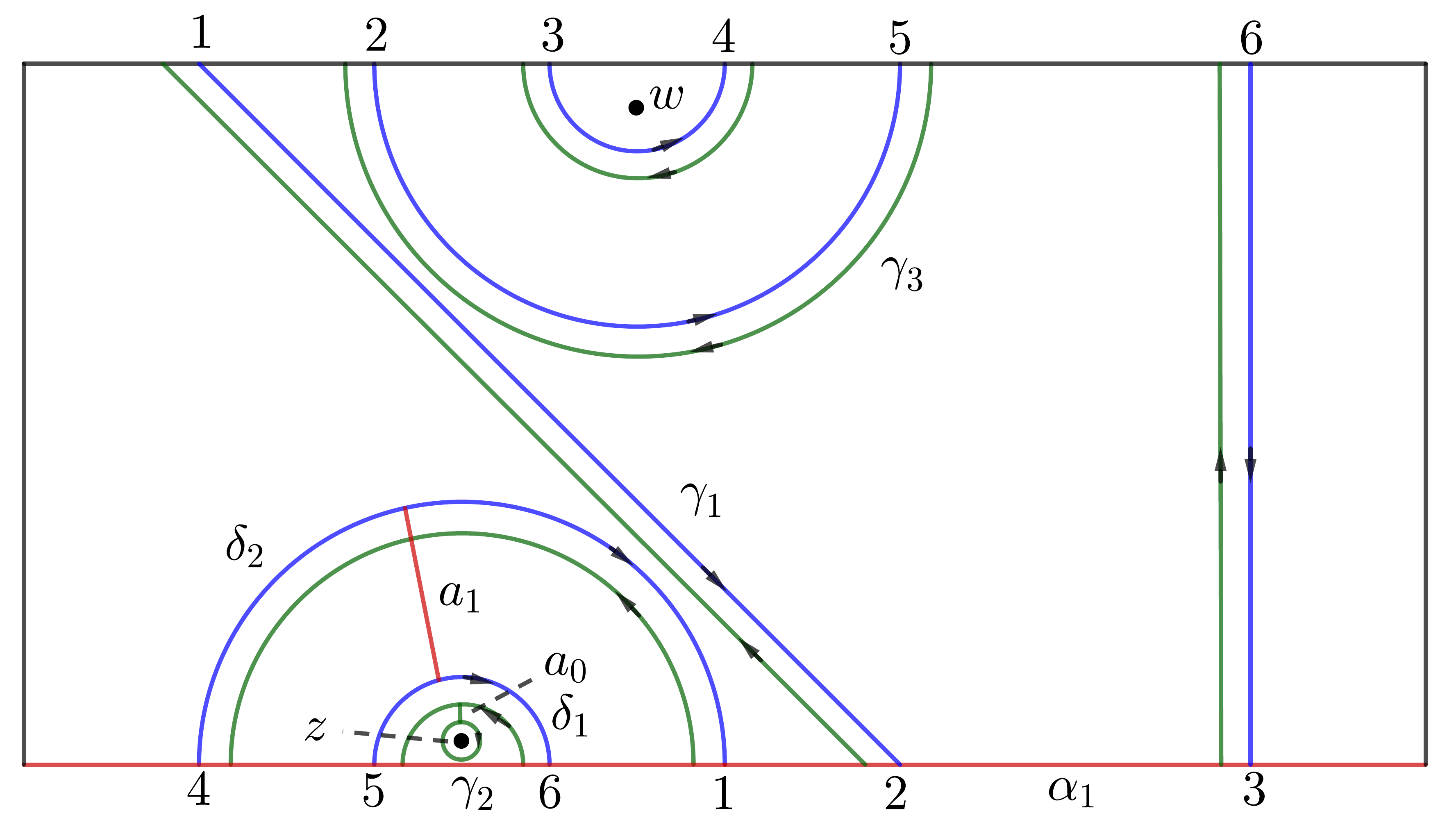}
\caption{The suture related to $W(6,2,1,3)$ and the anti-wave bypass arc.}
\label{fig: 6213}
\end{figure}
\begin{figure}[ht]
\centering
\includegraphics[width=0.8\textwidth]{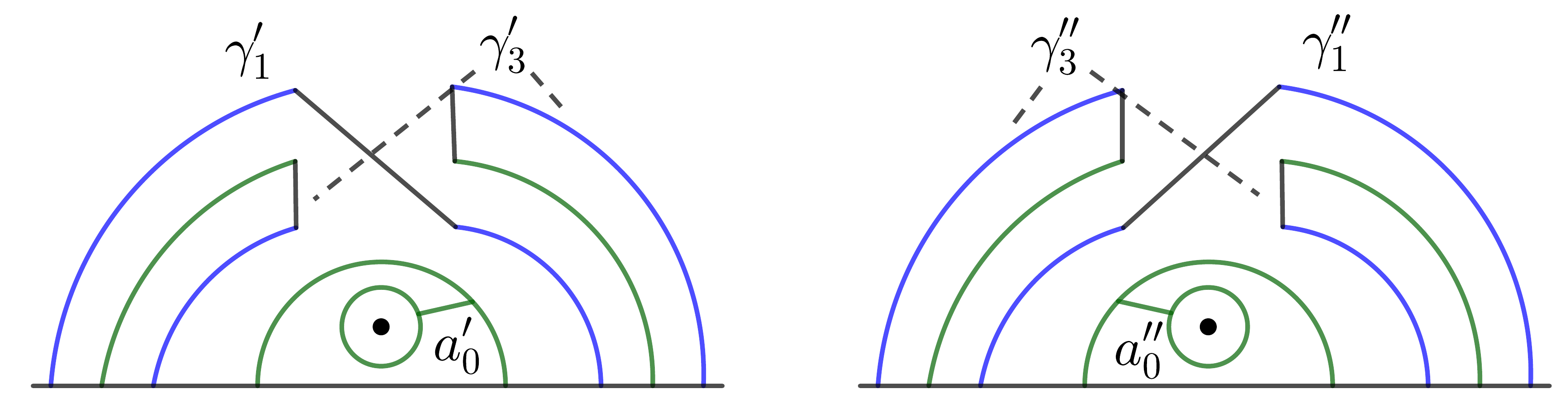}
\caption{Local diagrams after bypass attachments.}
\label{fig: 6213b}
\end{figure}

From Proposition \ref{prop: honda}, we can describe the sutures $\ga^\p$ and $\ga^\pp$ as follows. First, let $\ga_1^1$ and $\ga_1^2$ be two components of $\ga_1\backslash\partial a_1$. Suppose
$$\ga_1^\p=\ga_1^1\cup a_1~{\rm and}~\ga_1^\pp=\ga_1^2\cup a_1$$
as shown in Figure \ref{fig: 6213b}. Second, $\ga^\p$ is obtained from $\ga$ by a Dehn twist along $\ga_1^\pp$, and $\ga^\pp$ is obtained from $\ga$ by a Dehn twist along $\ga_1^\p$.

There is a more direct way to describe $\ga'$ and $\ga''$. First, note that the suture $\ga_2$ is disjoint from both Dehn-twist curves $\ga_1'$ and $\ga_1''$, so $\ga_2$ remains the same in $\ga'$ and $\ga''$. Second, it is straightforward to check the result of $\ga_1$ under the Dehn twist along $\ga_1''$ is $\ga_1'$, and the result of $\ga_1$ under the Dehn twist along $\ga_1'$ is $\ga_1''$. Thus, $\ga_1'$ is a component of $\ga'$ and $\ga_1''$ is a component of $\ga''$.

To figure out the image $\ga_3'$ of $\ga_3$ under the Dehn twist along $\ga_1''$, we first observe that we can isotop the band-sum arc $a_0$ to a new position $a_0'$ such that its endpoints $\partial a_0'$ lie on $\ga_1'\cap\ga_1$ and $\ga_2$, as shown in the left subfigure of Figure \ref{fig: 6213b}. Thus, the facts that $a_0'$ is disjoint from $\ga_1''$ and that $\ga_1'$ is the image of $\ga_1$ under the Dehn twist along $\ga_1''$ imply that performing a Dehn twist along $\ga_1''$ and performing the band sum along $a_0'$ commute with each other. Thus, we conclude that $\ga_3'$ can be obtained from a band sum on $\ga_1'$ and $\ga_2$ along the arc $a_0'$. Similarly we can describe the image $\ga_3''$ of $\ga_3$ under the Dehn twist along $\ga_1'$. Thus, we have described the sutures
$$\ga'=\ga_1'\cup\ga_2\cup\ga_3'~{\rm and}~\ga''=\ga_1'\cup\ga_2\cup\ga_3''$$
explicitly, and it follows that $(H,\ga')$ and $(H,\ga'')$ are both (1,1)-sutured handlebodies. Suppose they are associated to $W(p',q',r',s')$ and $W(p'',q'',r'',s'')$, respectively.

From the above description, both $\ga_1'$ and $\ga_1''$ are reduced. We have \[p'+p''=|\ga_1^\p\cap \al_1|+|\ga_1^\pp\cap \al_1|=|\ga_1\cap \al_1|=p.\]
Thus, the induction applies.

{\bf Case 2}. Suppose $\delta_1$ and $\delta_2$ are oriented oppositely.

An example $W(10,3,1,5)$ is shown in Figure \ref{fig: 10315}. By construction, there is a rainbow of $\ga_3$ to the right of $\delta_2$. Let $a_2$ be a wave bypass arc cutting $\delta_1,\delta_2$, and this rainbow as shown in Figure \ref{fig: 10315}. Suppose $\ga'$ and $\ga''$ are the other two sutures involved in the bypass triangle associated to $a_2$, respectively.
\begin{figure}[ht]
\centering
\includegraphics[width=0.8\textwidth]{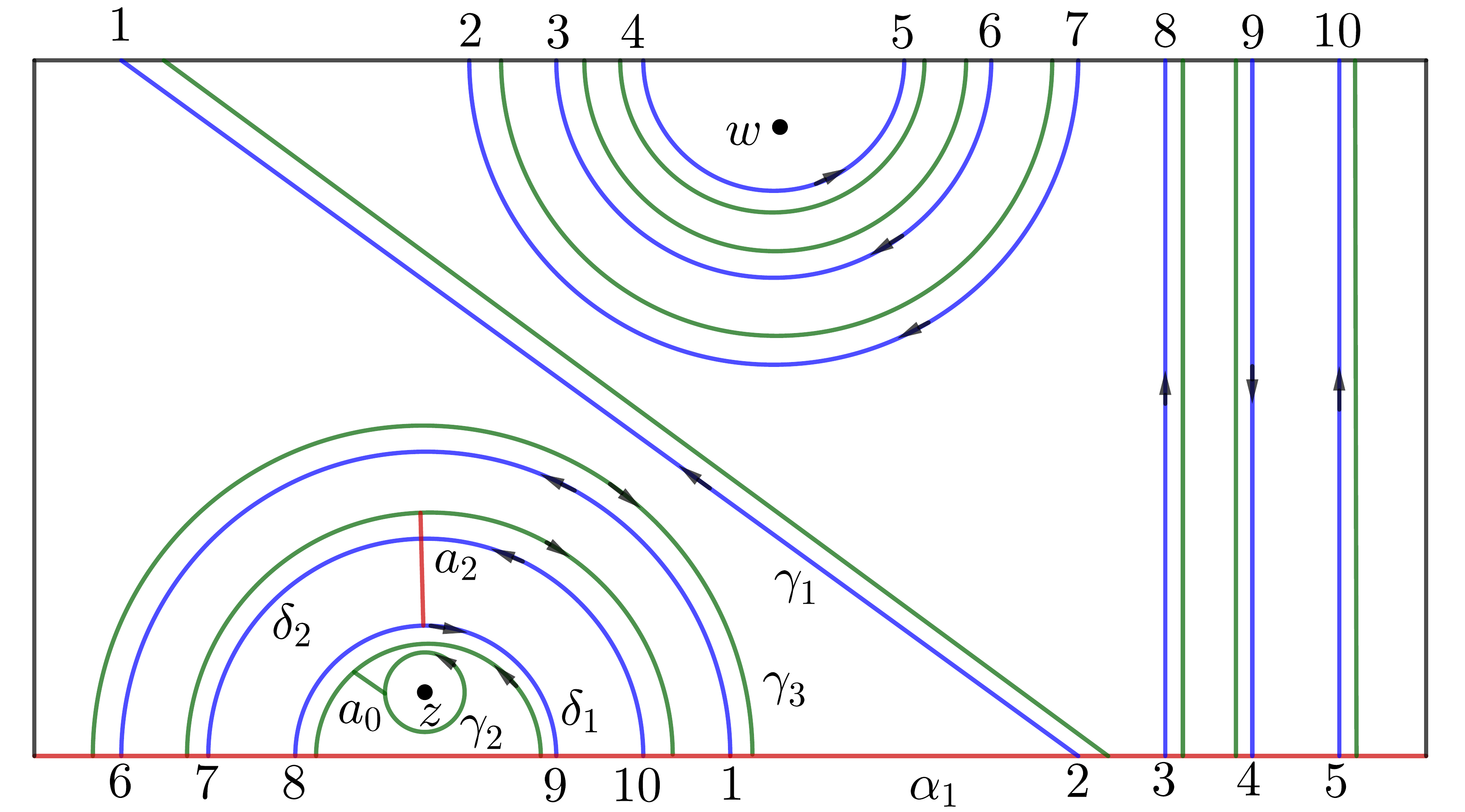}
\caption{The suture related to $W(10,3,1,5)$ and the wave bypass arc.}
\label{fig: 10315}
\end{figure}

\begin{figure}[ht]
\centering
\includegraphics[width=0.8\textwidth]{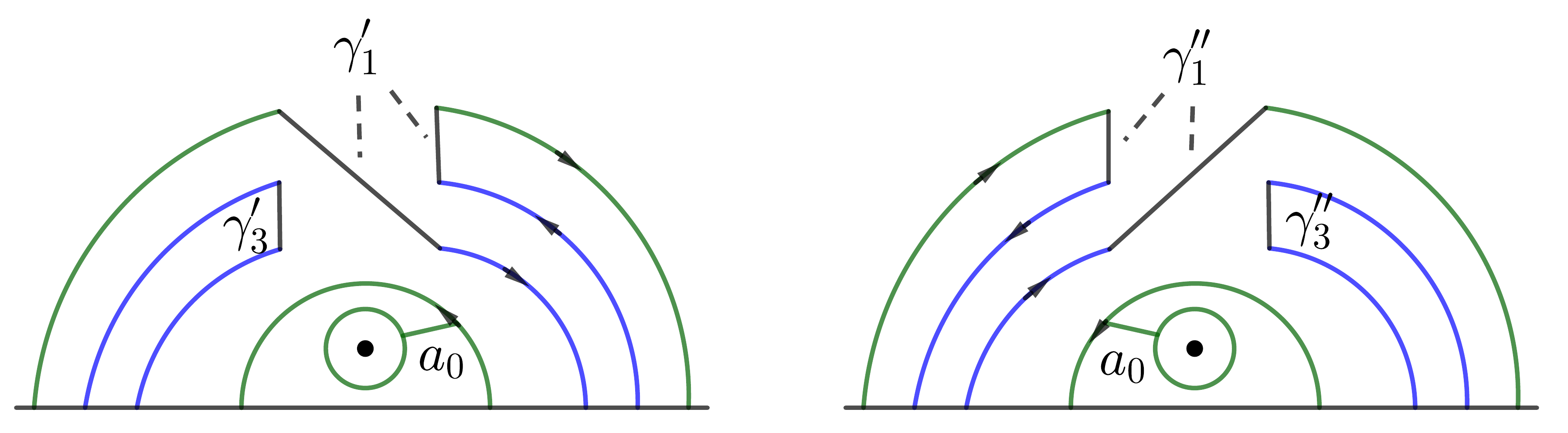}
\caption{Local diagrams after bypass attachments.}
\label{fig: 10315b}
\end{figure}

\begin{figure}[ht]
\centering
\includegraphics[width=0.8\textwidth]{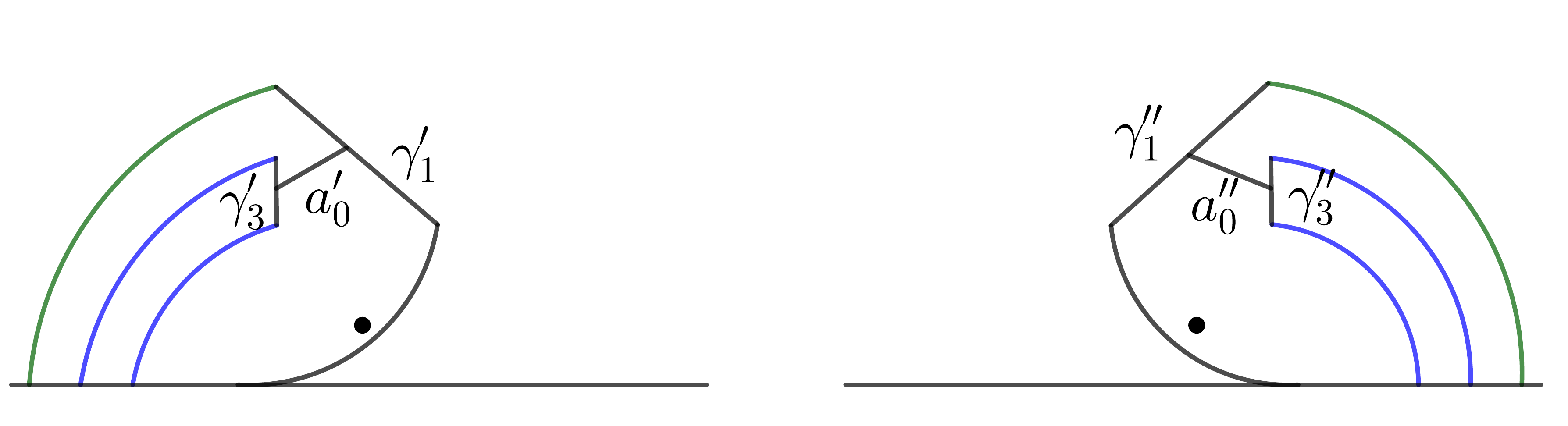}
\caption{Local diagrams after isotopy.}
\label{fig: 10315c}
\end{figure}

To describe the sutures $\ga'$ and $\ga''$ more explicitly, note that the arc $a_2$ cuts $\ga_1$ into two parts $\ga_1^1$ and $\ga_1^2$. Suppose that near $a_2$, $\ga_1^1$ is to the left of $a_2$ and $\ga_1^2$ is to the right of $a_2$. For $\ga'$, it consists of three components:
$$\ga'=\ga_1'\cup\ga_2\cup\ga_3',$$
where $\ga_2$ is as before, $\ga_1'$ is obtained by cutting $\ga_3$ open by $a_2$ and gluing it to $\ga_1^2$, and $\ga_3'$ is obtained by gluing a copy of $a_2$ to $\ga_1^1$. They are depicted as in the left subfigure of Figure \ref{fig: 10315b}. Note that the curve $\ga_1'$ is not reduced. We can isotop the curve along the arc $\ga_1^2$ into a reduced curve. The orientations of curves imply this reduced curve is depicted as in the left subfigure of Figure \ref{fig: 10315c}. Note that $\ga_3'$ is also not reduced. However, from Figure \ref{fig: 10315c} it is straightforward to check that $\ga_3'$ can be thought of as obtained from $\ga_2$ and $\ga_1'$ by a band sum along the arc $a_0'$. Also, it is clear that
$$|\ga_1'\cap \alpha_1|=|\ga_1^1\cap \alpha_1|.$$

Similarly, $\ga''$ consists of three components:
$$\ga''=\ga_1''\cup\ga_2\cup\ga_3'',$$
where $\ga_2$ is as before, $\ga_1''$ is obtained by cutting $\ga_3$ open by $a_2$ and gluing it to $\ga_1^1$, and $\ga_3''$ is obtained by gluing a copy of $a_2$ to $\ga_1^2$. They are depicted as in the right subfigure of Figure \ref{fig: 10315b}. Considering the orientations, we can isotop $\ga_2''$ along $\ga_2^1$ to the position shown in the right subfigure of Figure \ref{fig: 10315c}. Then $\ga_3''$ can be thought of as obtained from $\ga_1$ and $\ga_2''$ by a band sum along $a_0''$. Also,
$$|\ga_1''\cap \alpha_1|=|\ga_1^2\cap \alpha_1|.$$

Hence we conclude that $(H,\ga')$ and $(H,\ga'')$ are both (1,1)-sutured-handlebodies, and
$$|\ga_2'\cap \alpha_1|+|\ga_2''\cap \alpha_1|=|\ga_{2,1}\cap \alpha_1|+|\ga_{2,2}\cap \alpha_1|=|\ga_2\cap \alpha_1|.$$
Thus, the induction applies.

\end{proof}
\subsection{Large surgeries on simple knots}\label{subsec: large surgery}
\quad

In this subsection, we generalize the idea about the anti-wave bypass arc in Case 1 of the proof of Theorem \ref{thm: less1} to prove Theorem \ref{surgery}.

For a simple knot $K=K(p,q,k)\subset Y$ defined as in Definition \ref{defn: reduced}, consider its (1,1)-diagram $(T^2,\al_1,\be_1,z,w)$ and the Heegaard diagram $(\Sigma,\{\al_1,\al_2\},\{\be_1,\be_2\})$ of $Y$ from Construction \ref{cons: doubly-pointed}. Suppose $\al=\{\al_1,\al_2\}$ and $m=\be_2$. Let $l$ be an arc connecting $z$ to $w$ in $T^2-\be$, which induces a curve on $\Sigma$, still denoted by $l$. Then $(\Sigma,\al,\{\be_1\})$ is a diagram of the knot complement $Y(K)$ and $(m,l)$ forms a basis of $H_1(\partial Y(K))$.

Given $p,q\in \mathbb{Z}$ such that $
{\rm gcd}(p,q)=1$, let $\be_2^\p$ be the curve on $\Sigma$ obtained by resolving intersection points of $|q|$ parallel copies of $m$ and $|p|$ parallel copies of $l$. Then $(\Sigma,\al,\{\be_1,\be_2^\p\})$ is a Heegaard diagram of the manifold obtained from the $q/p$-surgery on $K$.

\brem
There are two ways of resolutions, namely the positive resolution and the negative resolution. The choice of the ways depends on orientations of $\Sigma$ and curves and signs of $p$ and $q$. However, the goal in Theorem \ref{surgery} is for any large enough surgery slope without regard for the sign. So the choice here is not important.
\erem

\bdefn[{\cite[Section 7]{Greene2016}}]\label{defn: antiwave}
An arc $a$ in a Heegaard diagram $(\Sigma,\al,\be)$ is called an \textbf{anti-wave} if it satisfies the following conditions.
\begin{enumerate}[(1)]
    \item It is properly embedded in a component $R$ of $\Sigma\backslash(\al\cup\be)$.
    \item Its endpoints lie on the interior of distinct arcs $r_1,r_2$ of $\partial R$, where $r_1$ and $r_2$ are subsets of the same curve $\al_i\subset \al$ or $\be_i\subset \be$ for some $i$.
    \item The local signs of intersection at two endpoints are the same.
\end{enumerate}
\edefn
For the simple knot $K$, there exist anti-waves in each component of $T^2\backslash\al\cup \be$. It is possible to choose $l$ so that $l\cap R=\emptyset$ for some component $R$ of $T^2\backslash(\al\cup\be)$. Let $a$ be an anti-wave in $R$. It induces an anti-wave in $(\Sigma,\al,\{\be_1,\be_2^\p\})$ which is still denoted by $a$.

\blem\label{surgery4}
Given the surgery slope $q/p$, consider the Heegaard diagram $(\Sigma,\al,\{\be_1,\be_2^\p\})$ and the anti-wave $a$ defined as above. Let $\be_1^1$ and $ \be_1^2 $ denote two arcs of $\be_1\backslash \partial a$ and let $\be_{1,i}=\be_1^i\cup a$ for $i\in\{1,2\}$. Suppose $Y_i$ is the manifold compatible with the Heegaard diagram $(\Sigma,\al,\{\be_{1,i},\be_2^\p\})$ and $K_i$ is the core knot of $\be_2^\p$. Suppose $(Y_0,K_0)=(Y,K)$. Then there exists a knot $K^\prime\subset Y$ such that $(Y_i,K_i)$, for $i=0,1,2$, satisfy the exact triangle associated to $K^\prime$ in Theorem \ref{thm: Scaduto's exact triangle}.
\elem
\begin{proof}
Consider neighborhoods $N(\be_1)$ and $N(\be_2^\p)$ of $\be_1$ and $\be_2^\p$ on $\Sigma$, respectively. The manifold
$$\Sigma\backslash {\rm int}(N(\be_1))\cup {\rm int}(N(\be_2^\p))$$
is diffeomorphic to $S^2\backslash\bigcup_{i=1}^4 D_i$, where $D_i$ are pair-wise disjoint disks. Suppose $\partial D_1$ and $\partial D_2$ are images of $\partial N(\be_2')$ under the diffeomorphism and $a$ becomes an arc connecting $\partial D_1$ to $\partial D_2$. There exists a curve $\be_3\subset S^2$ separating $D_1\cup D_2$ and $D_3\cup D_4$. It induces a null-homologous curve on $\Sigma$ which is still denoted by $\be_3$. By construction, $\be_3$ is disjoint from $\be_1,\be_2^\p$ and $\be_{1,i}$ for $i\in\{1,2\}$.

The 3-manifold compatible with the diagram $(\Sigma,\al,\{\be_3\})$ has two toroidal boundary components. It can be regarded as the complement of $K$ and $K^\prime$ in $Y$ for some knot $K^\prime$. Equivalently, $K$ is the core knot of $\be_2$ and $K^\prime$ is the core knot of $\be_1$ and the way how $K$ and $K^\p$ is linked is determined by $\be_3$. After isotopy, $\be_2,\be_{2,1}$ and $\be_{2,2}$ intersect pair-wise at one point. Then Theorem \ref{thm: Scaduto's exact triangle} applies to this case for $K^\p$.
\end{proof}
\blem\label{surgery5}
There exists $N_0>0$ so that for any surgery slope $q/p$ with $|q/p|\ge N_0$, the manifolds $Y_i$ for $i=0,1,2$ in Lemma \ref{surgery4} corresponding to $q/p$ satisfy\[|H_1(Y_0)|=|H_1(Y_1)|+|H_1(Y_2)|.\]
\elem
\begin{proof}
For $i\in\{1,2\}$, it is straightforward to check that $\be_{1,i}$ is a reduced curve without rainbows (\textit{c.f.} Definition \ref{defn: reduced}). Suppose orientations of curves are chosen so that\[|\be_{1,i}\cap \al_1|=\be_{1,i}\cdot \al_1=p_i ~{\rm and}~|\be_{1,i}\cap \al_2|=\be_{1,i}\cdot \al_2=k_i.\]By construction, we have \[p_1+p_2=p_0~{\rm and}~k_1+k_2=k_0.\]It is clear that $K_i$ is the dual knot of some surgery on $K(p_i,q_i,k_i)$ for some $q_i$. Suppose \[l\cdot \al_1=x~{\rm and}~l\cdot \al_2=y.\]Suppose the orientation of $m$ is chosen so that $m\cdot \al_2=1$. Then we have \[\be_2^\p\cdot \al_1=px,{\rm and}~\be_2^\p\cdot \al_2=q+py.\] Thus, for $i\in\{1,2\}$, we have\[|H_1(Y_0)|=|pk_0-(q+py)p_0| ~{\rm and}~|H_1(Y_i)|=|pk_i-(q+py)p_i|.\] One of these orders is the sum of other two. If $|q/p|$ is large enough, the order \[|H_1(Y_0)|=p|k_0-(q/p+y)p_0|\] is greater than $|H_1(Y_i)|$ for $i\in\{1,2\}$. Hence we conclude the lemma.
\end{proof}
\begin{proof}[Proof of Theorem \ref{surgery}]
We prove the theorem by induction on $p_0$ for simple knots $K(p_0,q_0,k_0)$.

For the base case, consider $p_0=1$. The simple knot is the unknot in $S^3$. The dual knot is the core knot of $\be$ for the standard diagram of a lens space, which is also a simple knot (\textit{c.f.} \cite[Section 2.3]{Rasmussen2007}). The theorem follows from Proposition \ref{prop: simple knots}.

When $p_0>1$, consider knots $K_i$ and the related simple knots $K(p_i,q_i,k_i)$ for $i\in\{1,2\}$ in Lemma \ref{surgery4} and Lemma \ref{surgery5}. By induction hypothesis, there exist $N_i$ for $K(p_i,q_i,k_i)$ so that $r$ surgery with $|r|\ge N$ induces an instanton Floer simple knot. Equivalently,\[\dim_\mathbb{C}KHI(Y_1,K_1)=|H_1(Y_1)|~{\rm and}~\dim_\mathbb{C}KHI(Y_2,K_2)=|H_1(Y_2)|.\]

We have to discuss the basis of the homology at first. The basis $(m,l_0)$ of $H_1(\partial Y(K_0))$ is chosen with respect to the anti-wave in the Heegaard diagram corresponding to $K_0$, which induces a basis on $H_1(\partial Y_i(K_i))$. However, in the proofs of Lemma \ref{surgery4} and Lemma \ref{surgery5}, another basis $(m,l_i)$ is chosen with respect to the anti-wave in the Heegaard diagram corresponding to $K_i$. Suppose $l_i=x_im+l$. Then \[qm+pl=(q-px_i)m+pl_i.\]

Suppose $N=\max\{N_1+|x_1|,N_2+|x_2|,N_0\}$. Then Lemma \ref{surgery5} implies
\[|H_1(Y_0)|=|H_1(Y_1)|+|H_1(Y_2)|.\]
Combining Theorem \ref{thm: Scaduto's exact triangle} and
Lemma \ref{surgery4}, we have\[\dim_\mathbb{C}KHI(Y_0,K_0)\le \dim_\mathbb{C}KHI(Y_1,K_1)+\dim_\mathbb{C}KHI(Y_2,K_2).\]Hence we have
\[\dim_\mathbb{C}KHI(Y_0,K_0)\le |H_1(Y_0)|.\]
Combining Theorem \ref{thm_1: from heegaard diagram to SHI} and \cite[Corollary 1.4]{scaduto2015instanton}, we have \[\dim_\mathbb{C}KHI(Y_0,K_0)\ge \dim_\mathbb{C}I^\sharp(Y_0)\ge|H_1(Y_0)|.\]Thus,\[\dim_\mathbb{C}KHI(Y_0,K_0)= |H_1(Y_0)|.\]and the induction applies.
\end{proof}
\brem
For the above proof, the induction on $\dim_\mathbb{C}\shi(-H,-\ga)$ does not work any more because the inequality \[\dim_\mathbb{C} KHI(-Y,K)\le \dim_\mathbb{C}\shi(-H,-\ga)\]might not always be sharp. That is the reason why we switch from bypass exact triangles to surgery exact triangles.
\erem

\section{Instanton Floer homology and the decomposition}
\label{sec: Instanton theory and knots}
\subsection{Basic setups}\label{subsec: basic setup for knots}
\quad

Suppose $Y$ is a closed 3-manifold and $K\subset Y$ is a null-homologous knot. Let $Y(K)$ be the knot complement $Y\backslash {\rm int}( N(K))$. Any Seifert surface $S$ of $K$ gives rise to a framing on $\partial Y(K)$: the longitude $\lambda$ can be picked as $S\cap \partial Y(K)$ with the induced orientation from $S$, and the meridian $\mu$ can be picked as the meridian of the solid torus $N(K)$ with the orientation so that $\mu\cdot \lambda=-1$. The `half lives and half dies' fact for 3-manifolds implies that the following map has a 1-dimensional image:
$$\partial_*: H_2(Y(K),\partial Y(K);\mathbb{Q})\ra H_1(\partial Y(K);\mathbb{Q}).$$
Hence any two Seifert surfaces lead to the same framing on $\partial Y(K)$. We write $g(K)$ for the minimal genus of the Seifert surface of $K$. If a Seifert surface of minimal surface is chosen, we also write it as $g(S)$.

\bdefn\label{defn_4: surgery on pairs}
The framing $(\mu,\lambda)$ defined as above is called the \textbf{canonical framing} of $(Y,K)$. With respect to this canonical framing, let
$$\widehat{Y}_{q/p}=Y(K)\cup_{\phi}S^1\times D^2$$
be the 3-manifold obtained from $Y$ by a $q/p$ surgery along $K$, \textit{i.e.}, $$\phi(\{1\}\times \partial D^2)=q\mu+p\lambda.$$

When the surgery slope is understood, we also write $\widehat{Y}_{q/p}$ simply as $\widehat{Y}$. Let $\widehat{K}$ be the dual knot, \textit{i.e.}, the image of $S^1\times\{0\}\subset S^1\times D^2$ in $\widehat{Y}$ under the gluing map.
\edefn

\begin{conv}
Throughout this section, we will always assume that ${\rm gcd}(p,q)=1$ and $q> 0$ or $(p,q)=(1,0)$ for a Dehn surgery. Especially, the original pair $(Y,K)$ can be thought of as a pair $(\widehat{Y},\widehat{K})$ obtained from $(Y,K)$ by the $1/0$ surgery. Moreover, we will always assume that the knot complement $Y(K)$ is irreducible. This is because if $Y(K)$ is not irreducible, then $Y(K)\cong  Y^\p(K^\p)\sharp Y^\pp$ for some closed 3-manifold $Y^\p,Y^\pp$ and a null-homologous knot $K^\p\subset{Y^\p}$. By the connected sum formula \cite[Section 1.8]{li2018contact}, we have $$\shi(Y(K),\ga)\cong \shi(Y^\p(K^\p),\ga)\otimes I^\sharp(Y^\pp)$$for any suture $\ga$. Hence all results hold after tensoring $I^\sharp(Y^\pp)$.
\end{conv}

Next, we describe various families of sutures on the knot complement. Suppose $K\subset Y$ is a null-homologous knot and the pair $(\widehat{Y},\widehat{K})$ is obtained from $(Y,K)$ by a $q/p$ surgery. Note we can identify the complement of $K\subset Y$ with that of $\widehat{K}\subset \widehat{Y}$, \textit{i.e.} $\widehat{Y}(\widehat{K})=Y(K).$

On $\partial{Y(K)}$, there are two framings: One comes from $K$, and we write longitude and meridian as $\lambda$ and $\mu$, respectively. The other comes from $\widehat{K}$. Note only the meridian $\hat{\mu}$ of $\widehat{K}$ is well-defined, and by definition, it is $\hat{\mu}=q\mu+p\lambda.$
\bdefn\label{defn_4: Ga_n-hat sutures}
If $p=0$, then $q=1$ and $\hat{\mu}=\mu$. We can take $\hat{\lambda}=\lambda$. If $(q,p)=(0,1)$, then we take $\hat{\lambda}=-\mu$. If $p,q\neq 0$, then we take $\hat{\lambda}=q_0\mu+p_0\lambda$, where $(q_0,p_0)$ is the unique pair of integers so that the following conditions are true.
\benu
\item $0\le |p_0|<|p|$ and $p_0p\le 0$.
\item $0\le |q_0|<|q|$ and $q_0q\le 0$.
\item $p_0q-pq_0=1$.
\eenu
In particular, if $(q,p)=(n,1)$, then $\hat{\lambda}=-\mu$.

For a homology class $x\lambda+y\mu$, let $\ga_{x\lambda+y\mu}$ be the suture consisting of two disjoint simple closed curves representing $\pm(x\lambda+y\mu)$ on $\partial{Y(K)}$. Furthermore, for $n\in\intg$, define
$$\widehat{\Ga}_{n}(q/p)=\ga_{\hat{\lambda}-n\hat{\mu}}=\ga_{(p_0-np)\lambda+(q_0-nq)\mu},~{\rm and}~ \widehat{\Ga}_{\mu}(q/p)=\ga_{\hat{\mu}}=\ga_{p\lambda+q\mu}.$$
Suppose $(q_n,p_n)\in\{\pm(q_0-nq,p_0-np)\}$ such that $q_n\ge 0$. 

When $\hat{\lambda}$ and $\hat{\mu}$ are understood, we omit the slope $q/p$ and simply write $\widehat{\Ga}_n$ and $\widehat{\Ga}_{\mu}$. When $(q,p)=(1,0)$, we write $\Ga_n$ and $\Ga_\mu$ instead.
\edefn

\brem\label{rem: remark on indexing the sutures}
Since the two components of the suture must be given opposite orientations, the notations $\ga_{x\lambda+y\mu}$ and $\ga_{-x\lambda-y\mu}$ represent the same suture on the knot complement $Y(K)$. Our choice makes $q_{n+1}\le q_n$ for $n<-1$ and $q_{n+1}\ge q_n$ for $n\ge 0$.
\erem

Finally, we sketch the proofs of Proposition \ref{prop: I sharp and SHI} and Theorem \ref{thm: torsion spin c decomposition}. The essential arguments are proved in the next two subsections.

\begin{proof}[Proof of Proposition \ref{prop: I sharp and SHI}]
Suppose $\hat{\mu}=q\mu+p\lambda$. The set $\mathcal{G}$ of sutures consists of $-\widehat{\Ga}_n$ for all $n\in\mathbb{N}$ satisfying $q_n>q+2g(K)$, where $g(K)$ is the Seifert genus of $K$. For any $\ga\in\mathcal{G}$. The grading in term (1) is from Theorem \ref{thm_2: grading on SHI}, where the admissible surface $S$ is the Seifert surface of $K$ with minimal genus (up to a stabilization, \textit{c.f.} Definition \ref{defn_4: S tau}).

For $\ga=-\widehat{\Ga}_{n}$ in term (2), the image of $f_{K,\ga}$ is a direct summand of `middle gradings' of $\shi(-Y(K),-\widehat{\Ga}_n)$, which is denoted by $\mathcal{I}_+(-\widehat{Y},\widehat{K})$ in Definition \ref{defn_4: essential component}. The isomorphism $f_\ga$ is defined in Proposition \ref{prop_4: F_n is an isomorphism}. It is the restriction of $F_n$ on the corresponding gradings, where $F_n$ is defined in Lemma \ref{lem_4: surgery triangle for knots} (a special case of Lemma \ref{lem_3: surgery triangle} for knots).

For $\ga_1=-\widehat{\Ga}_{n_1},\ga_2=-\widehat{\Ga}_{n_2}$ in term (3), the isomorphism $g_{K,\ga_1,\ga_2}$ is defined in Lemma \ref{lem_4: psi pm is isomorphism} (see also Remark \ref{rem: grading shift}). It is the restrictions of bypass maps on corresponding gradings.

Term (4) is from commutative diagrams in Lemma \ref{lem_4: surgery triangle for knots}.
\end{proof}
\begin{proof}[Proof of Theorem \ref{thm: torsion spin c decomposition}]
We prove this theorem for $-\widehat{Y}$. Suppose $\mu\subset\partial \widehat{Y}(\widehat{K})$ is a simple closed curve such that $|\mu\cdot \lambda|=1$. Suppose $Y$ is the manifold obtained by Dehn filling along $\mu$ and suppose $K$ is the dual knot in $Y$. By the assumption of the Seifert surface $S$, we know that $K$ is a null-homologous knot in $Y$. Moreover, we know that $(\widehat{Y},\widehat{K})$ is obtained from $Y$ by performing the $q/p$-surgery along $(Y,K)$ with respect to the canonical framing induced by $S$. The choice of $\mu$ is not important since it will only change the integer $p$. Then we can apply the construction in Proposition \ref{prop: I sharp and SHI}. In particular, we can use the term (2) of Proposition \ref{prop: I sharp and SHI} for any $\ga\in\mathcal{G}$ to define the decomposition. Explicitly, we use $\mathcal{I}_+(\widehat{Y},\widehat{K})$ to decompose $I^\sharp(\widehat{Y})$. By term (3) and term (4) of Proposition \ref{prop: I sharp and SHI}, this decomposition is well-defined up to isomorphism.
\end{proof}

\subsection{Bypasses on knot complements}\label{subsec: bypasses on knot complements}
\quad

Suppose $Y$ is a closed 3-manifold and $K\subset Y$ is a null-homologous knot. Let $(\mu,\lambda)$ be the canonical framing on $Y(K)$ in Definition \ref{defn_4: surgery on pairs}. Suppose $y_3/x_3$ is a surgery slope with $y_3\ge 0$. According to Honda \cite[Section 4.3]{honda2000classification}, there are two basic bypasses on the balanced sutured manifold $(Y(K),\ga_{(x_3,y_3)})$, whose arcs are depicted as in Figure \ref{bypass_arc}. The sutures involved in the bypass triangles were described explicitly in Honda \cite[Section 4.4.4]{honda2000classification}.
\begin{figure}[ht]
\centering
\includegraphics[width=0.4\textwidth]{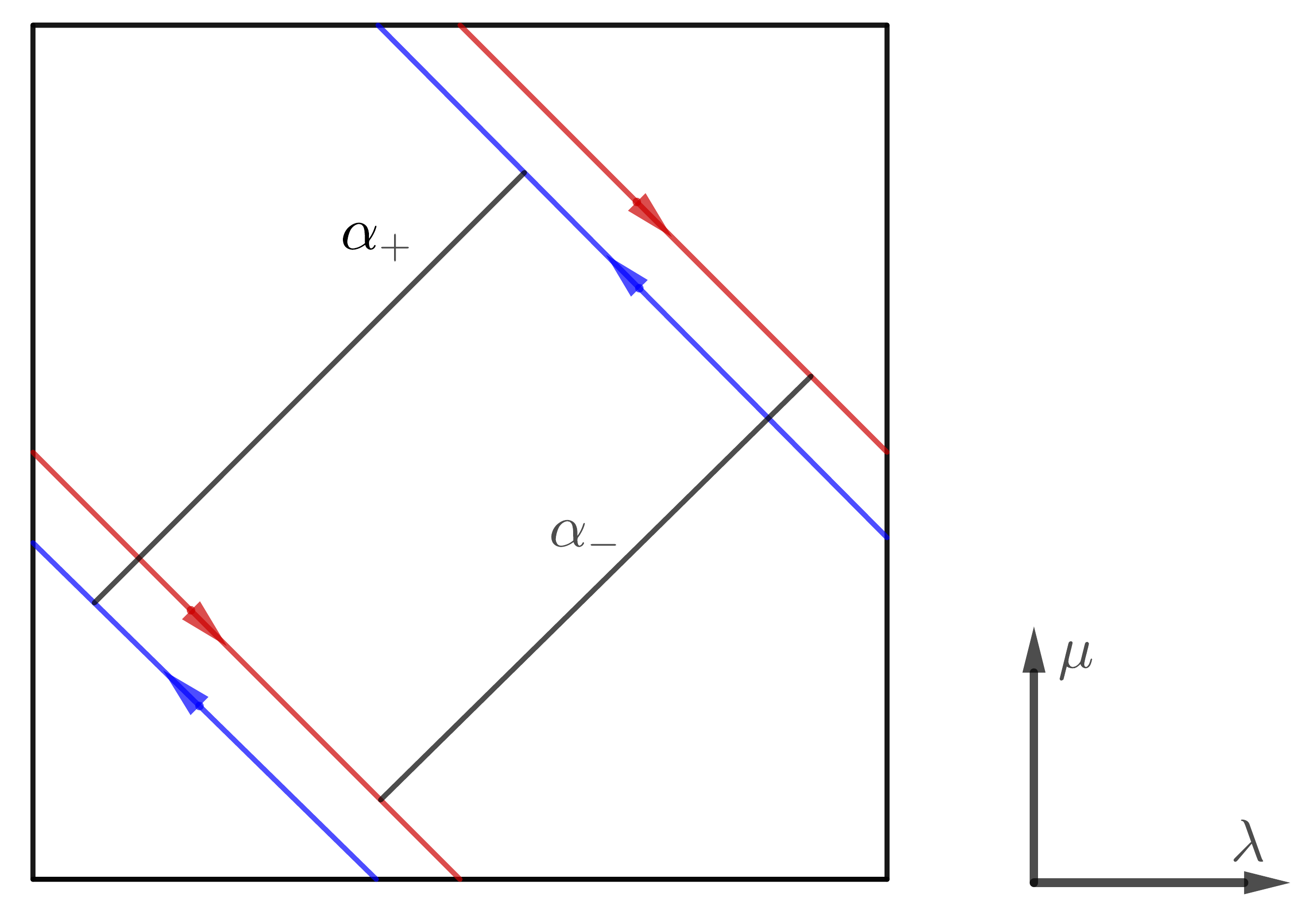}
\caption{Bypass arcs on $\ga_{(1,-1)}$.}
\label{bypass_arc}
\end{figure}
\begin{defn}\label{defn: xi and yi}
For a surgery slope $y_3/x_3$ with $y_3\ge 0$, suppose its continued fraction is
$$
\frac{y_3}{x_3}=[a_0,a_1,\dots,a_n]=a_0-\frac{1}{a_1-\frac{1}{\dots-\frac{1}{a_n}}},$$where integers $a_i< -1$. If $y_3>-x_3> 0$, let
$$\frac{y_1}{x_1}=[a_0,\dots,a_{n-1}]~{\rm and~}\frac{y_2}{x_2}=[a_0,\dots,a_{n}+1].
$$For simplicity, when $a_{i}=-2$ for integer $i\in(k,n]$ and $a_k\neq -2$, we can set $$[a_0,\dots,a_{n}+1]=[a_0,\dots,a_{k}+1].$$If $-x_3>y_3>0$, we do the same thing for $x_3/(-y_3)$. If $y_3>x_3>0$, we do the same thing for $y_3/(-x_3)$. If  $x_3>y_3>0$, we do the same thing for $x_3/(-y_3)$. If $y_3/x_3=1/0$, then set $y_1/x_1=0/1$ and $y_2/x_2=1/(-1)$. If $y_3/x_3=0/1$, then set $y_1/x_1=1/(-1)$ and $y_1/x_1=0/1$.
We always require that $y_1\ge 0$ and $y_2\ge 0$.
\end{defn}
\brem
It is straightforward to use induction to verify that for $y_3>-x_3> 0$,
$$
x_3=x_1+x_2~{\rm and}~y_3=y_1+y_2.$$
\erem
The bypass exact triangle in Theorem \ref{thm_2: bypass exact triangle on general sutured manifold} becomes the following.

\begin{prop}\label{prop_4: bypass exact triangle on knot complement}
Suppose $K\subset Y$ is a null-homologous knot, and suppose the surgery slopes $y_i/x_i$ for $i\in\{1,2,3\}$ are defined as in Definition \ref{defn: xi and yi}. Suppose the indices are considered mod 3. Let $\psi_{+,y_{i+1}/{x_{i+1}}}^{y_i/x_i}$ and $\psi_{-,y_{i+1}/{x_{i+1}}}^{y_i/x_i}$ be bypass maps from two different bypasses, respectively. Then there are two exact triangles related to $\psi_{+,y_{i+1}/{x_{i+1}}}^{y_i/x_i}$ and $\psi_{-,y_{i+1}/{x_{i+1}}}^{y_i/x_i}$, respectively.
\begin{equation*}
\xymatrix@R=6ex{
\shi(-Y(K),-\ga_{(x_2,y_2)})\ar[rr]^{\psi_{\pm,y_3/x_3}^{y_2/x_2}}&&\shi(-Y(K),-\ga_{(x_3,y_3)})\ar[dl]^{\psi_{\pm,y_1/x_1}^{y_3/x_3}}\\
&\shi(-Y(K),-\ga_{(x_1,y_1)})\ar[lu]^{\psi_{\pm,y_2/x_2}^{y_1/x_1}}&
}
\end{equation*}
\end{prop}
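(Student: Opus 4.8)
\textbf{Proof proposal for Proposition \ref{prop_4: bypass exact triangle on knot complement}.}

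The plan is to realize the two claimed triangles as instances of the general bypass exact triangle in Theorem \ref{thm_2: bypass exact triangle on general sutured manifold}, applied to the balanced sutured manifold $(Y(K),\ga_{(x_3,y_3)})$, and then to identify the three sutures arising in each bypass triangle with $\ga_{(x_1,y_1)}$, $\ga_{(x_2,y_2)}$, $\ga_{(x_3,y_3)}$. Concretely, I would fix the two basic bypass arcs on $\partial Y(K)$ pictured in Figure \ref{bypass_arc} (one giving the maps $\psi_{+,\bullet}^{\bullet}$, the other the maps $\psi_{-,\bullet}^{\bullet}$), and for each arc run the cyclic sequence of three bypass attachments as in the setup preceding Theorem \ref{thm_2: bypass exact triangle on general sutured manifold}. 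Theorem \ref{thm_2: bypass exact triangle on general sutured manifold} immediately produces an exact triangle once we know the three sutures; so the entire content of the proposition is the \emph{combinatorial identification} of those sutures with the $\ga_{(x_i,y_i)}$ prescribed by Definition \ref{defn: xi and yi}.

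First I would recall, following Honda \cite[Section 4.4.4]{honda2000classification}, the explicit description of how the relevant dividing curves on the torus $\partial Y(K)$ transform under each of the two basic bypass attachments: in the language of Definition \ref{defn_2: bypass arc} and Proposition \ref{prop: honda}, one arc is a wave bypass (producing the `mystery move') and the other is an anti-wave bypass (producing a positive Dehn twist along a curve on $\partial Y(K)$), and in both cases the suture stays a pair of parallel copies of a single slope. The key step is then to match the resulting slopes with the Farey/continued-fraction recipe of Definition \ref{defn: xi and yi}. Here I would argue by induction on the length $n$ of the continued fraction expansion $y_3/x_3 = [a_0,\dots,a_n]$. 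For the base cases ($y_3/x_3 = 1/0$ and $0/1$, and the short expansions) one checks directly against Figure \ref{bypass_arc} that the bypass triangle on $\ga_{(1,-1)}$ has the other two sutures $\ga_{(1,0)}$ and $\ga_{(0,1)}$ — i.e. exactly $y_1/x_1 = 0/1$ and $y_2/x_2 = 1/(-1)$ up to the sign/orientation conventions recorded in Remark \ref{rem: remark on indexing the sutures}. For the inductive step, attaching the basic bypass to $(Y(K),\ga_{(x_3,y_3)})$ realizes the passage from the fraction $[a_0,\dots,a_n]$ to its two Farey neighbours $[a_0,\dots,a_{n-1}]$ and $[a_0,\dots,a_n+1]$; the relation $x_3 = x_1+x_2$, $y_3 = y_1 + y_2$ recorded in the remark after Definition \ref{defn: xi and yi} is precisely the Farey-sum identity $\det\begin{pmatrix} x_1 & x_2 \\ y_1 & y_2\end{pmatrix} = \pm 1$ needed for the three slopes to be mutually distance-one on the Farey graph, which is exactly the condition under which a single bypass arc on the torus boundary cycles through them. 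I would also need to keep track of the orientations (the two components of a suture carry opposite orientations, so $\ga_{x\lambda+y\mu} = \ga_{-x\lambda-y\mu}$) to pin down the normalizations $y_i \ge 0$ and the sign conditions $p_0 p \le 0$, $q_0 q \le 0$ type constraints; the reductions to $x_3/(-y_3)$ etc. in Definition \ref{defn: xi and yi} are there precisely to land in the right `quadrant' of slopes, and I would verify that each such reduction is an orientation-preserving change of basis of $H_1(\partial Y(K))$ and hence does not affect which bypass triangle appears.

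The main obstacle I anticipate is bookkeeping rather than conceptual: carefully tracking the two distinct bypass arcs through the three-fold cycle while keeping the orientation data of the dividing curves straight, so that the slopes produced by Honda's transformation rules are correctly matched — with the correct signs — to the continued-fraction neighbours in Definition \ref{defn: xi and yi}, including the degenerate cases ($1/0$, $0/1$, and the $a_i=-2$ truncation convention). Two secondary points require care: (i) the bypasses here are attached from the \emph{exterior} of $Y(K)$ (as emphasized after equation \eqref{eq_3: bypass triangle, -} in the earlier argument), so the pictures must be read with the matching orientation; and (ii) one should confirm that the two bypass arcs in Figure \ref{bypass_arc} are, up to isotopy of bypass arcs, the only two basic ones, so that every triangle of Farey-neighbouring slopes is realized by one of $\psi_{+,\bullet}^{\bullet}$ or $\psi_{-,\bullet}^{\bullet}$; both facts are already contained in Honda \cite[Sections 4.3--4.4]{honda2000classification}. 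Once the suture identification is established, exactness is immediate from Theorem \ref{thm_2: bypass exact triangle on general sutured manifold} and there is nothing further to prove.
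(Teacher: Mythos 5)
Your proposal is correct and follows essentially the same route as the paper, which states this proposition as an immediate consequence of the general bypass exact triangle (Theorem \ref{thm_2: bypass exact triangle on general sutured manifold}) together with Honda's description of the two basic bypasses on the torus boundary and the resulting sutures \cite[Sections 4.3, 4.4.4]{honda2000classification}. Your continued-fraction/Farey induction is simply an unpacking of the combinatorial identification that the paper delegates to Honda, so there is nothing substantively different in the two arguments.
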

\brem
Note that there are two different bypasses, which induce two different exact triangles. However, both of them involve the same set of balanced sutured manifolds.
\erem
\begin{conv}
We will use $\psi^*_{+,*}$ and $\psi^*_{-,*}$ to denote bypass maps with respect to some slopes.
\end{conv}
Next, we describe the bypass exact triangles for $\widehat{\Ga}_n$ and $\widehat{\Ga}_\mu$ in Definition \ref{defn_4: Ga_n-hat sutures}.

\bprop\label{prop_4: bypass exact triangle for Ga_n-hat sutures}
Suppose $K\subset Y$ is a null-homologous knot and suppose the pair $(\widehat{Y},\widehat{K})$ is obtained from $(Y,K)$ by a $q/p$ surgery. Suppose further that the sutures $\widehat{\Ga}_n$ and $\widehat{\Ga}_{\mu}$ are defined as in Definition \ref{defn_4: Ga_n-hat sutures}. Then there are two exact triangles related to $\psi_{+,*}^*$ and $\psi_{-,*}^*$, respectively.
\begin{equation}\label{eq_4: positive bypass}
\xymatrix@R=6ex{
\shi(-Y(K),-\widehat{\Ga}_{n})\ar[rr]^{\psi^{n}_{\pm,n+1}}&&\shi(-Y(K),-\widehat{\Ga}_{n+1})\ar[dl]^{\psi^{n+1}_{\pm,\mu}}\\
&\shi(-Y(K),-\widehat{\Ga}_{\mu})\ar[ul]^{\psi^{\mu}_{\pm,n}}&
}
\end{equation}
\eprop
\begin{proof}
If $\widehat{\Ga}_{n+1}=\ga_{(x_3,y_3)}$ and $y_3>-x_3>0$ and , then it is straightforward to check that
$$\ga_{(x_1,y_1)}=\widehat{\Ga}_{\mu}~{\rm and}~\ga_{(x_2,y_2)}=\widehat{\Ga}_{n},$$where $(x_1,y_1)$ and $(x_2,y_2$ are defined as in Definition \ref{defn: xi and yi}. Then the exact triangles follows from Proposition \ref{prop_4: bypass exact triangle on knot complement}. The similar proof applies to other cases.
\end{proof}
Similar to Lemma \ref{lem_3: surgery triangle}, we have the following proposition.
\blem[{\cite[Section 3]{li2019tau}}]\label{lem_4: surgery triangle for knots}
Suppose $K\subset Y$ is a null-homologous knot and suppose the pair $(\widehat{Y},\widehat{K})$ is obtained from $(Y,K)$ by a $q/p$ surgery. Suppose further that the sutures $\widehat{\Ga}_n$ are defined as in Definition \ref{defn_4: Ga_n-hat sutures}. Then, there is an exact triangle
\begin{equation}\label{eq_2: surgery exact triangle}
\xymatrix@R=6ex{
\shi(-Y(K),-\widehat{\Ga}_{n})\ar[rr]&&\shi(-Y(K),-\widehat{\Ga}_{n+1})\ar[dl]^{F_{n+1}}\\
&\shi(-\widehat{Y}(1),-\delta)\ar[ul]^{G_n}&
}
\end{equation}
where the balanced sutured manifold $(\widehat{Y}(1),\delta)$ is defined as in Remark \ref{rem: SFH}.

Furthermore, we have four commutative diagrams related to $\psi^{n}_{+,n+1}$ and $\psi^{n}_{-,n+1}$, respectively
\begin{equation*}\label{eq_4: commutative diagram, G, +}
\xymatrix@R=6ex{
\shi(-Y(K),-\widehat{\Ga}_{n})\ar[rr]^{\psi_{\pm,n+1}^n}&&\shi(-Y(K),-\widehat{\Ga}_{n+1})\\
&\shi(-\widehat{Y}(1),-\delta)\ar[ul]^{G_n}\ar[ur]_{G_{n+1}}&
}
\end{equation*}
and
\begin{equation*}\label{eq_4: commutative diagram, F, +}
\xymatrix@R=6ex{
\shi(-Y(K),-\widehat{\Ga}_{n})\ar[rd]_{F_n}\ar[rr]^{\psi_{\pm,n+1}^n}&&\shi(-Y(K),-\widehat{\Ga}_{n+1})\ar[ld]^{F_{n+1}}\\
&\shi(-\widehat{Y}(1),-\delta)&
}
\end{equation*}
\elem

The bypass maps in (\ref{eq_4: positive bypass}) behave well under the gradings on $\shi$ associated to the fixed Seifert surface of $K$. To provide more details, let us fix a minimal genus Seifert surface $S$ of $K$.

\begin{conv}
We will always assume by default that the Seifert surface $S$ has minimal possible intersections with any suture $\ga_{(p,q)}$.
\end{conv}

\bdefn\label{defn_4: S tau}
Suppose $K\subset Y$ is a null-homologous knot and $\ga_{(x,y)}$ is a suture on $\partial Y(K)$ with $y\ge 0$. Suppose further that $S$ is a minimal genus Seifert surface of $K$. Let $S^{\tau(y)}$ be a negative stabilization of $S$ if $y$ is even and be the original $S$ if $y$ is odd. When the suture $\ga_{(x,y)}$ is understood, we simply write $S^{\tau}$. More explicitly, we define a map $\tau:\mathbb{N}\mapsto\{0,-1\}$ as
\begin{equation*}
\tau(y)=\left\{
\begin{array}{cl}
    0&y{~\rm is~odd}\\
    -1&y{~\rm is~even}
\end{array}
\right.
\end{equation*}
\edefn
\brem
It is straightforward to check that $S^\tau\subset (M,\ga_{(x,y)})$ is admissible. Note that the negative stabilization is with respect to the suture $\ga_{(x,y)}$ rather than $-\ga_{(x,y)}$. This is important because later we will incorporate this definition of $S^{\tau}$ with the bypass maps, where the orientations of the sutures are reverse (\textit{c.f.} Remark \ref{rem_2: pm switches according to orientations of the suture}).
\erem
\begin{conv}
Note that in Subsection \ref{subsec: dimension inequality for tangles}, we also define another function $\tau$. Since the old definition will no longer be used, and the new tau function serves for the same purpose as the old, we keep using the same notation. From now on, we use the new definition of $\tau$ as in Definition \ref{defn_4: S tau}.
\end{conv}

\blem\label{lem_4: top and bottom non-vanishing gradings}
Suppose $K\subset Y$ is a null-homologous knot and $\ga_{(x,y)}$ is a suture on $\partial Y(K)$ with $y\ge 0$. Suppose further that $S$ is a minimal genus Seifert surface of $K$. Then the maximal and minimal nontrivial gradings of $\shi(-Y(K),-\ga_{(x,y)},S^{\tau})$ are
$$i_{max}=\frac{1}{2}(y-1-\tau(y))+g(S)=\lceil \frac{y-1}{2}\rceil+g(S)$$and$$i_{min}=-\frac{1}{2}(y-1+\tau(y))-g(S)=\lceil -\frac{y-1}{2}\rceil-g(S).$$
\elem

\bpf
The proof is similar to that of Lemma \ref{lem_3: top and bottom nontrivial gradings}, though in the current lemma, we can also identify the top and bottom nontrivial gradings by making use of sutured manifold decompositions. Note that we have assumed that the knot complement $Y(K)$ is irreducible in the convention after Definition \ref{defn_4: surgery on pairs}, and $S$ is a minimal genus Seifert surface of $K$, so the decomposition of $(Y(K),\ga)$ along $S$ and $-S$ are both taut.

When $y$ is odd, we have $S^{\tau}=S$. Then it follows directly from Theorem \ref{thm_2: grading on SHI} that
$$i_{max}=\frac{y-1}{2}+g(S)~{\rm and}~i_{min}=-\frac{y-1}{2}-g(S).$$

When $y$ is even, we have $S^{\tau}=S^-$. Then it follows directly from Theorem \ref{thm_2: grading on SHI} that
$$i_{max}=\frac{y}{2}+g(S).$$
To figure out the grading $i_{min}$, note that
\beq
\shi(-Y(K),-\ga_{(x,y)},S^-,-i_{\max})=&\shi(-Y(K),-\ga_{(x,y)},-(S^-),i_{\max})\\
=&\shi(-Y(K),-\ga_{(x,y)},(-S)^+,i_{\max})\\
=&0\\
\eeq
The last equality follows from Lemma \ref{lem_2: stabilization and decomposition} and the fact that the positive stabilization on $(-S)$ with respect to $\ga_{(x,y)}$ becomes a negative one with respect to $-\ga_{(x,y)}$ (\textit{c.f.} Remark \ref{rem_2: pm switches according to orientations of the suture}).

We also have
\beq
\shi(-Y(K),-\ga_{(x,y)},S^-,1-i_{\max})=&\shi(-Y(K),-\ga_{(x,y)},-(S^-),i_{\max}-1)\\
=&\shi(-Y(K),-\ga_{(x,y)},(-S)^+,i_{\max}-1)\\
=&\shi(-Y(K),-\ga_{(x,y)},(-S)^-,i_{\max})\\
\eeq
The last equality follows from Theorem \ref{thm_2: grading shifting property}. From Lemma \ref{lem_2: stabilization and decomposition} and Theorem \ref{thm_2: grading on SHI}, we have
\beq
\shi(-Y(K),-\ga_{(x,y)},(-S)^-,i_{\max})\cong \shi(-M',-\ga')\neq0,
\eeq
where $(M',\ga')$ is the taut balanced sutured manifold obtained from $(Y(K),\ga_{(x,y)})$ by decomposing along $-S$. Hence we conclude that
$$i_{min}=1-i_{max}=1-\frac{y}{2}-g(S).$$
\epf

\bdefn\label{defn_4: top and bottom non-vanishing gradings}
For any integer $y\in\mathbb{N}$, define
$$i_{max}^y=\lceil \frac{y-1}{2}\rceil+g(S),{~\rm and~}i_{min}^y=\lceil -\frac{y-1}{2}\rceil-g(S).$$
For the suture $\widehat{\Ga}_n=\ga_{(p_n,q_n)}$, define
$$\hat{i}^n_{max}=i_{max}^{q_n}~{\rm and}~\hat{i}^n_{min}=i^{q_n}_{min}.$$
\edefn
\begin{conv}
Note that we use the similar notations as in Subsection \ref{subsec: dimension inequality for tangles}, while from now on, we use the new definitions of $i_{max}^y$ and $i_{min}^y$ as in Definition \ref{defn_4: top and bottom non-vanishing gradings}. We will use $i^*_{max},\hat{i}^*_{max}$ and $i^*_{min},\hat{i}^*_{min}$ to denote the maximal and minimal gradings for the slope specified by $*$.
\end{conv}
In \cite[Section 5]{li2019direct}, a graded version of the bypass exact triangles in Proposition \ref{prop_4: bypass exact triangle on knot complement} is proved, which is similar to Lemma \ref{lem_3: graded bypass triangle}.


\bprop[{\cite[Proposition 5.5]{li2019direct}}]\label{prop_4: graded bypass for Ga_n-hat sutures}
Suppose $K\subset Y$ is a null-homologous knot and suppose the pair $(\widehat{Y},\widehat{K})$ is obtained from $(Y,K)$ by a $q/p$ surgery. Suppose further that the sutures $\widehat{\Ga}_n$ and $\widehat{\Ga}_{\mu}$ are defined as in Definition \ref{defn_4: Ga_n-hat sutures} and $S$ is a minimal genus Seifert surface of $K$. Then the followings hold. Note that the grading shift notation comes from Definition \ref{defn_3: shifting the grading}.
\benu
\item For $n\in\intg$ so that $q_{n+1}=q_n+q$, \textit{i.e.}, $n\ge 0$, there are two bypass exact triangles:
\begin{equation*}\label{eq_4: positive bypass, graded}
\xymatrix{
\shi(-Y(K),-\widehat{\Ga}_{n},S^{\tau})[\hat{i}_{min}^{n+1}-\hat{i}_{min}^{n}]\ar[rr]^{\quad\quad\psi^{n}_{+,n+1}}&&\shi(-Y(K),-\widehat{\Ga}_{n+1},S^{\tau})\ar[dll]^{\psi^{n+1}_{+,\mu}}\\
\shi(-Y(K),-\widehat{\Ga}_{\mu},S^{\tau})[\hat{i}_{max}^{n+1}-\hat{i}_{max}^{\mu}]\ar[u]^{\psi^{\mu}_{+,n}}&&
}
\end{equation*}
and
\begin{equation*}\label{eq_4: negative bypass, graded}
\xymatrix{
\shi(-Y(K),-\widehat{\Ga}_{n},S^{\tau})[\hat{i}_{max}^{n+1}-\hat{i}_{max}^n]\ar[rr]^{\quad\quad\psi^{n}_{-,n+1}}&&\shi(-Y(K),-\widehat{\Ga}_{n+1},S^{\tau})\ar[dll]^{\psi^{n+1}_{-,\mu}}\\
\shi(-Y(K),-\widehat{\Ga}_{\mu},S^{\tau})[\hat{i}_{min}^{n+1}-\hat{i}_{min}^{\mu}]\ar[u]^{\psi^{\mu}_{-,n}}&&
}
\end{equation*}

\item For $n\in\intg$ so that $q_{n+1}=q_n-q$, \textit{i.e.}, $n< -1$, there are two bypass exact triangles:
\begin{equation*}\label{eq_4: positive bypass, graded, 2}
\xymatrix{
\shi(-Y(K),-\widehat{\Ga}_{n},S^{\tau})\ar[rr]^{\psi^{n}_{+,n+1}\quad\quad}&&\shi(-Y(K),-\widehat{\Ga}_{n+1},S^{\tau})[\hat{i}_{max}^{n}-\hat{i}_{max}^{n+1}]\ar[dll]^{\psi^{n+1}_{+,\mu}}\\
\shi(-Y(K),-\widehat{\Ga}_{\mu},S^{\tau})[\hat{i}_{min}^{n}-\hat{i}_{min}^{\mu}]\ar[u]^{\psi^{\mu}_{+,n}}&&
}
\end{equation*}
and
\begin{equation}\label{eq_4: negative bypass, graded, 2}
\xymatrix{
\shi(-Y(K),-\widehat{\Ga}_{n},S^{\tau})\ar[rr]^{\psi^{n}_{-,n+1}\quad\quad}&&\shi(-Y(K),-\widehat{\Ga}_{n+1},S^{\tau})[\hat{i}_{min}^{n}-\hat{i}_{min}^{n+1}]\ar[dll]^{\psi^{n+1}_{-,\mu}}\\
\shi(-Y(K),-\widehat{\Ga}_{\mu},S^{\tau})[\hat{i}_{max}^{n}-\hat{i}_{max}^{\mu}]\ar[u]^{\psi^{\mu}_{-,n}}&&
}
\end{equation}

\item For $n\in\intg$ so that $q_{n+1}+q_n=q$, \textit{i.e.}, $n=-1$, there are two bypass exact triangles:
\begin{equation}\label{eq_4: positve bypass, graded, 3}
\xymatrix{
\shi(-Y(K),-\widehat{\Ga}_{n},S^{\tau})[\hat{i}_{min}^{\mu}-\hat{i}_{min}^n]\ar[rr]^{\psi^{n}_{+,n+1}}&&\shi(-Y(K),-\widehat{\Ga}_{n+1},S^{\tau})[\hat{i}_{max}^{\mu}-\hat{i}_{max}^{n+1}]\ar[dll]^{\psi^{n+1}_{+,\mu}}\\
\shi(-Y(K),-\widehat{\Ga}_{\mu},S^{\tau})\ar[u]^{\psi^{\mu}_{+,n}}&&
}
\end{equation}
and
\begin{equation}\label{eq_4: negative bypass, graded, 3}
\xymatrix{
\shi(-Y(K),-\widehat{\Ga}_{n},S^{\tau})[\hat{i}_{max}^{\mu}-\hat{i}_{max}^{n}]\ar[rr]^{\psi^{n}_{-,n+1}}&&\shi(-Y(K),-\widehat{\Ga}_{n+1},S^{\tau})[\hat{i}_{min}^{\mu}-\hat{i}_{min}^{n+1}]\ar[dll]^{\psi^{n+1}_{-,\mu}}\\
\shi(-Y(K),-\widehat{\Ga}_{\mu},S^{\tau})\ar[u]^{\psi^{\mu}_{-,n}}&&
}
\end{equation}
\eenu
Furthermore, all maps involved in the above bypass exact triangles are grading preserving.
\eprop
\brem\label{curve invariant}
The above proposition can be understood by Remark \ref{block}. Alternatively, we can understand the above proposition by the following method, which is inspired by the curve invariant introduced by Hanselman, Rasmussen, and Waston \cite{Hanselman2016,Hanselman2018}.
\benu
    \item Consider the lattice $\mathbb{Z}^2\subset \mathbb{R}^2$. A surgery slope $y/x\in \mathbb{Q}\cup\{\infty\}$ corresponds to a straight arc connecting two lattice points in $\mathbb{Z}^2$.
    \item Suppose the sutures $\ga_{(x_1,y_1)}$, $\ga_{(x_2,y_2)}$, and $\ga_{(x_3,y_3)}$ are defined as in Definition \ref{defn: xi and yi}. Then it is easy to see the arcs corresponding to these three sutures bound a triangle containing no lattice point in the interior. There are two different triangles up to translation, which correspond to two different bypass triangles. All bypass maps are clockwise in $\mathbb{R}^2$. Rotation around the origin by $180$ degrees will switch the roles of $\psi_{+,*}^*$ and $\psi_{-,*}^*$.
    \item The height of the middle point of the straight arc indicates the grading before stabilization (so there are gradings of half integers). If the top endpoints of two arcs are the same, the grading shift is about $\hat{i}_{min}^*$. If the bottom endpoints of two arcs are the same, the grading shift is about $\hat{i}_{max}^*$.
\eenu
\erem
\subsection{Decomposing framed instanton Floer homology}\label{subsection: Decomposing framed instanton Floer homology}
\quad

In this subsection, we prove term (2) of Proposition \ref{prop: I sharp and SHI}. Throughout this subsection, let $K\subset Y$ be a null-homologous knot and let the pair $(\widehat{Y},\widehat{K})$ be obtained from $(Y,K)$ by a $q/p$ surgery with $q>0$. Suppose the sutures $\widehat{\Ga}_n=\ga_{(p_n,q_n)}$ and $\widehat{\Ga}_{\mu}$ are defined as in Definition \ref{defn_4: Ga_n-hat sutures} and suppose $S$ is a minimal genus Seifert surface of $K$. The stabilization $S^{\tau}$ of $S$ is chosen according to Definition \ref{defn_4: S tau}. The maximal and minimal gradings of the involved sutured instanton Floer homology are described in Lemma \ref{lem_4: top and bottom non-vanishing gradings}. For any $i\in\intg$, let
$$\psi_{\pm,n+1}^{n,i}=\psi_{\pm,n+1}^{n}|_{\shi(-Y(K),-\widehat{\Ga}_{n},S^{\tau},i)}$$
be the restriction.

\blem\label{lem_4: psi pm is isomorphism}
Suppose $n\in\intg$ so that $q_{n+1}=q_n+q$, \textit{i.e.} $n\ge 0$. Then the map
$$\psi_{+,n+1}^{n,i}: \shi(-Y(K),-\widehat{\Ga}_{n},S^{\tau},i)\ra\shi(-Y(K),-\widehat{\Ga}_{n+1},S^{\tau},i-\hat{i}_{min}^n+\hat{i}_{min}^{n+1})$$
is an isomorphism if $i\leq \hat{i}_{max}^n-2g(S)$. Similarly, the map
$$\psi_{-,n+1}^{n,i}: \shi(-Y(K),-\widehat{\Ga}_{n},S^{\tau},i)\ra\shi(-Y(K),-\widehat{\Ga}_{n+1},S^{\tau},i-\hat{i}_{max}^n+\hat{i}_{max}^{n+1})$$
is an isomorphism if $i\geq \hat{i}_{min}^n+2g(S)$.
\elem

\bpf
The proof the lemma is similar to that of Lemma \ref{lem_3: iso at particular gradings}.
\epf


\blem\label{lem_4: G_n is zero}
For any $n\in\intg$ so that $q_{n+1}-q=q_n\geq 2g$, the map
$$G_{n}:\shi(-\widehat{Y}(1),-\delta)\ra \shi(-Y(K),-\widehat{\Ga}_n)$$
defined as in (\ref{eq_2: surgery exact triangle}) is the zero map.
\elem
\bpf
The proof of the lemma is similar to that of Lemma \ref{lem_3: G_n is zero}.
\epf

\bcor\label{cor_4: KHI and I sharp}
We have
$$\dim_{\mathbb{C}}\shi(-Y(K),-\widehat{\Ga}_{\mu})\geq\dim_{\mathbb{C}}\shi(-\widehat{Y}(1),-\delta).$$
\ecor
\bpf
Since for a large enough integer $n$, we have $G_n=0$, we know that
$$\dim_{\mathbb{C}}\shi(-\widehat{Y}(1),-\delta)=\dim_{\mathbb{C}}\shi(-Y(K),-\widehat{\Ga}_{n+1})-\dim_{\mathbb{C}}\shi(-Y(K),-\widehat{\Ga}_{n}).$$
Then the corollary follows directly from Proposition \ref{prop_4: bypass exact triangle for Ga_n-hat sutures}.
\epf

\bcor\label{cor_4: KHI for links}
Suppose $L_1$ is a non-empty link in $\widehat{Y}$ that is disjoint from $\widehat{K}$. Let $L_2=L_1\cup \widehat{K}$. Consider the link complements $-\widehat{Y}(L_1)$ and $-\widehat{Y}(L_2)$. For the link complement, let $\widehat{\Ga}_{\mu}$ be the suture consisting of two meridians for each component of the link.
We have
$$\dim_{\mathbb{C}}\shi(-\widehat{Y}(L_2),-\widehat{\Ga}_{\mu})\geq 2\cdot\dim_{\mathbb{C}}\shi(-\widehat{Y}(L_1),-\widehat{\Ga}_{\mu})$$
\ecor

\bpf
The same argument to prove Corollary \ref{cor_4: KHI and I sharp} can be applied verbatim to verify
$$\dim_{\mathbb{C}}\shi(-\widehat{Y}(L_2),-\widehat{\Ga}_{\mu})\geq\shi(-\widehat{Y}(L_1)(1),-\widehat{\Ga}_{\mu}\cup-\delta),$$
where $\widehat{Y}(L_1)(1)$ is obtained from $-\widehat{Y}(L_1)$ by removing a 3-ball disjoint from $L_1$, and $\delta$ is a simple closed curve on the new spherical boundary component. From \cite[Lemma 4.14]{baldwin2016instanton}, we have
$$\shi(-\widehat{Y}(L_1)(1),-\widehat{\Ga}_{\mu}\cup-\delta)=2\cdot\dim_{\mathbb{C}}\shi(-\widehat{Y}(L_1),-\widehat{\Ga}_{\mu}).$$
\epf

\blem\label{lem_4: q-cyclic}
Suppose $n\in\intg$ satisfies $q_{n+1}-q=q_n\geq q+2g(S)$, and suppose $i,j\in \intg$ with
$$\hat{i}_{min}^n+2g(S)\leq i,j\leq \hat{i}_{max}^n-2g(S){\rm~and~}i-j=q.$$
Then we have
$$\shi(-Y(K),-\widehat{\Ga}_n,S^{\tau},i)\cong \shi(-Y(K),-\widehat{\Ga}_n,S^{\tau},j).$$
\elem
\bpf
Since $i\leq \hat{i}_{max}^n-2g(S)$, by Lemma \ref{lem_4: psi pm is isomorphism}, we know
$$\shi(-Y(K),-\widehat{\Ga}_{n},S^{\tau},i)\cong\shi(-Y(K),-\widehat{\Ga}_{n+1},S^{\tau},i-\hat{i}_{min}^n+\hat{i}_{min}^{n+1}).$$
Similarly, since $j\geq \hat{i}_{min}^n+2g(S)$, we know that
$$\shi(-Y(K),-\widehat{\Ga}_{n},S^{\tau},j)\cong\shi(-Y(K),-\widehat{\Ga}_{n+1},S^{\tau},j-\hat{i}_{max}^n+\hat{i}_{max}^{n+1}).$$
Note also that
\beq
i-\hat{i}_{min}^n+\hat{i}_{min}^{n+1}=&j-\hat{i}_{max}^n+\hat{i}_{max}^{n+1}+q+(\hat{i}_{max}^n-\hat{i}_{min}^n)-(\hat{i}_{max}^{n+1}-\hat{i}_{min}^{n+1})\\
=&j-\hat{i}_{max}^n+\hat{i}_{max}^{n+1}+q+(q_n-1+2g(S))-(q_{n+1}-1+2g(S))\\
=&j-\hat{i}_{max}^n+\hat{i}_{max}^{n+1}.
\eeq
Hence we obtain the desired result.
\epf

\bdefn\label{defn_4: essential component}
Suppose $n\in\intg$ satisfies $q_{n+1}-q=q_n\geq q+2g(S)$. Define
$$\mathcal{I}_+(-\widehat{Y},\widehat{K},i)=\shi(-Y(K),-\widehat{\Ga}_n,S^{\tau},\hat{i}_{max}^n-2g(S)-i),$$
and
$$\mathcal{I}_+\wyk=\bigoplus_{i=0}^{q-1}\mathcal{I}_+(-\widehat{Y},\widehat{K},i).$$
\edefn

\brem\label{rem: grading shift}
From Lemma \ref{lem_4: psi pm is isomorphism}, the definition of $\mathcal{I}_+\wyk$ is independent of the choice of the integer $n$ satisfying the required condition. Also, by Lemma \ref{lem_4: q-cyclic}, the definition of $\mathcal{I}_+\wyk$ would be the same (up to a $\mathbb{Z}_q$ grading shift) if we consider arbitrary $q$ consecutive gradings within the range $[\hat{i}_{min}^n+2g(S),\hat{i}_{max}^n-2g(S)].$
\erem

Next, our goal is to show that there is an isomorphism
$$\mathcal{I}_+\wyk\cong\shi(-\widehat{Y}(1),-\delta).$$
To do so, we first introduce some notations for performing computations.

\begin{defn}\label{defn_4: blocks and sizes}
Suppose $n\in\intg$. The direct sum of some consecutive gradings of $$\shi(-Y(K),-\widehat{\Ga}_n,S^{\tau})$$is called a {\bf block}. For a block $A$, the number of gradings involved is called the {\bf size} of $A$.
\end{defn}

\begin{exmp}\label{exmp_4: blocks}
Suppose $n\in\intg$ satisfies $q_n\geq q+2g(S)$. Let $A,B,C$ and $D$ be the blocks consisting of the top $2g(S)$ gradings, the next $q$ gradings, the next $q_{n}-q-2g(S)$ gradings, and the last $2g(S)$ gradings of $\shi(-Y(K),-\widehat{\Ga}_n,S^{\tau})$, respectively. We write
\begin{equation*}
\shi(-Y(K),-\widehat{\Ga}_n,S^{\tau})=\left(
\begin{array}{c}
    A\\
    B\\
    C\\
    D
\end{array}
\right).
\end{equation*}
From Definition \ref{defn_4: essential component}, we know that $\mathcal{I}_+\wyk$ is itself a block and in fact
$$\mathcal{I}_+\wyk=B.$$
Also, we can write
\begin{equation*}
\shi(-Y(K),-\widehat{\Ga}_n,S^{\tau})=\left(
\begin{array}{c}
    A\\
    E\\
    F\\
    D
\end{array}
\right),
\end{equation*}
where $E$ and $F$ are of size $(q_{n}-q-2g(S))$ and $q$, respectively. By comparing the gradings, we have
\begin{equation*}
\left(
\begin{array}{c}
    B\\
    C
\end{array}
\right)=\left(
\begin{array}{c}
    E\\
    F
\end{array}
\right)
\end{equation*}
\textit{A priori}, we do not have $B=E$ and $C=F$ since they have different sizes. However, when putting together, the total size of $B$ and $C$ equals that of $E$ and $F$.
\end{exmp}

\blem\label{lem_4: the dimension equals}
Let $\mathcal{I}_+\wyk$ be defined as in Definition \ref{defn_4: essential component}. Then we have
$$\dim_{\mathbb{C}}\mathcal{I}_+\wyk=\dim_{\mathbb{C}}\shi(-\widehat{Y}(1),-\delta).$$
\elem
\bpf
Suppose $n\in\intg$ satisfies $q_n\geq q+2g(S)$. We can apply Proposition \ref{prop_4: graded bypass for Ga_n-hat sutures}. Using blocks, we have the following. (There is no enough room for writing down the whole notation for $\shi$, so we use the sutures to denote them.)
\begin{equation*}
\xymatrix@R=0.2ex{
{\rm size}&-\widehat{\Ga}_{\mu}\ar[rr]^{\psi_{+,n}^{\mu}}&&-\widehat{\Ga}_{n}\ar[rr]^{\psi_{+,n+1}^{n}}&&-\widehat{\Ga}_{n+1}\ar[rr]^{\psi_{+,\mu}^{n+1}}&&-\widehat{\Ga}_{\mu}\\
q&G&&&&X_1&&G\\
2g(S)&H&&A&&X_2&&H\\
q_n-q-2g(S)&&&E&&X_3&&\\
q&&&F&&X_4&&\\
2g(S)&&&D&&X_5&&\\
}
\end{equation*}
From the exactness, we know that
$$X_1=G,~X_3=E,~X_4=F,{\rm~and~}X_5=D.$$

There is another bypass exact triangle, and similarly we have
\begin{equation*}
\xymatrix@R=0.2ex{
{\rm size}&-\widehat{\Ga}_{\mu}\ar[rr]^{\psi_{-,n}^{\mu}}&&-\widehat{\Ga}_{n}\ar[rr]^{\psi_{-,n+1}^{n}}&&-\widehat{\Ga}_{n+1}\ar[rr]^{\psi_{-,\mu}^{n+1}}&&-\widehat{\Ga}_{\mu}\\
2g(S)&&&A&&A&&\\
q&&&B&&B&&\\
q_n-q-2g(S)&&&C&&C&&\\
2g(S)&I&&D&&X_6&&I\\
q&J&&&&J&&J\\
}
\end{equation*}
Comparing the two expressions of $\shi(-Y(K),-\widehat{\Ga}_{n+1},S^{\tau})$, we have
\begin{equation*}
\left(
\begin{array}{c}
    G\\
    X_2\\
    E\\
    F\\
    D
\end{array}
\right)=\shi(-Y(K),-\widehat{\Ga}_{n+1},S^{\tau})=\left(
\begin{array}{c}
    A\\
    B\\
    C\\
    X_6\\
    J
\end{array}
\right).
\end{equation*}
Taking sizes into consideration, we know that
\begin{equation*}
\left(
\begin{array}{c}
    G\\
    X_2
\end{array}
\right)=\left(
\begin{array}{c}
    A\\
    B
\end{array}
\right),~E=C,{\rm~and~}\left(
\begin{array}{c}
    F\\
    D
\end{array}
\right)=\left(
\begin{array}{c}
    X_6\\
    J
\end{array}
\right)
\end{equation*}
Thus, we know that
\begin{equation*}
\shi(-Y(K),-\widehat{\Ga}_{n+1},S^{\tau})=\left(
\begin{array}{c}
    A\\
    B\\
    E\\
    F\\
    D
\end{array}
\right).
\end{equation*}
Comparing this expression with the expression of $\shi(-Y(K),-\widehat{\Ga}_{n},S^{\tau})$ in Example \ref{exmp_4: blocks}, we have
\beq
\dim_{\mathbb{C}}\mathcal{I}_+\wyk=&\dim_{\mathbb{C}}B\\
=&\dim_{\mathbb{C}}\shi(-Y(K),-\widehat{\Ga}_{n+1})-\dim_{\mathbb{C}}\shi(-Y(K),-\widehat{\Ga}_{n})\\
=&\dim_{\mathbb{C}}\shi(-\widehat{Y}(1),-\delta),
\eeq
where the last equality follows from Lemma \ref{lem_4: G_n is zero}.
\epf

\bprop\label{prop_4: F_n is an isomorphism}
Suppose $n\in\intg$ satisfies $q_{n+1}-q=q_n\geq q+2g(S)$. Then the map $F_n$ restricted to $\mathcal{I}_+\wyk$ is an isomorphism, \textit{i.e.},
$$F_n|_{\mathcal{I}_+\wyk}:\mathcal{I}_+\wyk\xra{\cong} \shi(-\widehat{Y}(1),-\delta)$$
\eprop

\bpf
It suffices to show that the restriction of $F_n$ is surjective. Since $q_n\geq q+2g(S)$, we have $q_{n-1}=q_n-q\geq 2g(S)$. By Lemma \ref{lem_4: G_n is zero}, we know that $G_{n-1}=0$. By exactness in (\ref{eq_2: surgery exact triangle}), the map $F_n$ is surjective. Then it suffices to show that $F_n$ remains surjective when restricted to ${\mathcal{I}_+\wyk}$. For any $x\in \shi(-\widehat{Y}(1),-\delta)$, let $y\in\shi(-Y(K),-\widehat{\Ga}_{n})$ be an element so that $F_n(y)=x.$ Suppose
$$y=\sum_{j\in\intg}y_j,\text{ where }y_j\in\shi(-Y(K),-\widehat{\Ga}_{n},S^{\tau},j).$$
For any $y_j$, we want to find $y'_j\in\mathcal{I}_+\wyk$ so that $F_n(y_j)=F_n(y_j').$

To do this, we first assume that $j>\hat{i}_{max}^n-2g(S)$. Then there exists an integer $m$ so that
$$\hat{i}_{max}^n-2g(S)-q+1\leq j-mq\leq \hat{i}_{max}^n-2g(S).$$
We can take
\begin{equation}\label{eq: many bypasses}
    y_j'=(\psi_{-,n+1}^{n,j-mq})^{-1}\circ\cdots\circ(\psi_{-,n+m}^{n,\hat{i}_{max}^{n+m}-\hat{i}_{max}^n+j-mq})^{-1}\circ\psi_{+,n+m}^{n+m-1}\circ\dots\circ\psi_{+,n+1}^n(y_j).
\end{equation}
From Lemma \ref{lem_4: psi pm is isomorphism}, all the negative bypass maps involved in (\ref{eq: many bypasses}) are isomorphisms so the inverses exist. Also, we have $$y_j'\in\shi(-Y(K),-\widehat{\Ga}_{n},S^{\tau},j-mq)\subset \mathcal{I}_+\wyk.$$
Finally, from Lemma \ref{lem_4: surgery triangle for knots}, we know that $F_n(y'_j)=F_n(y_j).$

For $$j\in [ \hat{i}_{max}^n-2g(S)-q-1,\hat{i}_{max}^n-2g(S)],$$we can simply take $y_j'=y_j$.

For $j\leq\hat{i}_{max}^n-2g(S)-q-1$, we can pick $y_j'$ similarly as in (\ref{eq: many bypasses}), while switching the roles of $\psi_{+,*}^*$ and $\psi_{-,*}^*$ in (\ref{eq: many bypasses}).

In summary, we can take
$$y'=\sum_{j\in\intg}y_j'\in\mathcal{I}_+\wyk\text{ with }F_n(y')=F_n(y)=x.$$
Hence the restriction of $F_n$ is still surjective, and we obtain the desired result.
\epf

In Definition \ref{defn_4: essential component}, we use a large enough integer $n$ to define $\mathcal{I}_+\wyk$. We can also use a small enough integer $n$ to define a vector space similar to $\mathcal{I}_+\wyk$. Recall
$$\widehat{\Ga}_n=\ga_{(p_n,q_n)}$$
is defined as in Definition \ref{defn_4: Ga_n-hat sutures} and $q_n$ is chosen to be always non-negative.

\bdefn\label{defn_4: essential component, 2}
Suppose $n\in\intg$ satisfies $q_{n-1}-q=q_n\geq q+2g(S)$. Define
$$\mathcal{I}_-(-\widehat{Y},\widehat{K},i)=\shi(-Y(K),-\widehat{\Ga}_n,S^{\tau},\hat{i}_{max}^n-2g(S)-i),$$
and
$$\mathcal{I}_-\wyk=\bigoplus_{i=0}^{q-1}\mathcal{I}_-(-\widehat{Y},\widehat{K},i).$$
\edefn

The arguments for $\mathcal{I}_-\wyk$ are similar to those for $\mathcal{I}_+\wyk$. We sketch them as follows.

\blem\label{lem_4: psi pm is isomorphism, 2}
Suppose $n\in\intg$ satisfies $q_{n-1}-q=q_n$, \textit{i.e.} $n<-1$. Then the map
$$\psi_{+,n}^{n-1,i+\hat{i}_{max}^{n-1}-\hat{i}_{max}^{n}}:\shi(-Y(K),-\widehat{\Ga}_{n-1},S^{\tau},i+\hat{i}_{max}^{n-1}-\hat{i}_{max}^{n})\ra\shi(-Y(K),-\widehat{\Ga}_{n},S^{\tau},i)$$
is an isomorphism if $i\leq \hat{i}_{max}^{n}-2g(S)$. Similarly, the map
$$\psi_{-,n}^{n-1,i-\hat{i}_{min}^{n}+\hat{i}_{min}^{n-1}}:\shi(-Y(K),-\widehat{\Ga}_{n-1},S^{\tau},i-\hat{i}_{min}^{n}+\hat{i}_{min}^{n-1})\ra\shi(-Y(K),-\widehat{\Ga}_{n},S^{\tau},i)$$
is an isomorphism if $i\geq \hat{i}_{min}^{n}+2g(S)$.
\elem
\bpf
The proof is similar to the proof of Lemma \ref{lem_3: iso at particular gradings}.
\epf

\blem\label{lem_4: F_n is zero}
For any $n\in\intg$ so that $q_{n-1}-q=q_n\geq 2g$, the map
$$F_{n}:\shi(-Y(K),-\widehat{\Ga}_n)\ra \shi(-\widehat{Y}(1),-\delta)$$
defined as in (\ref{eq_2: surgery exact triangle}) is the zero map.
\elem
\bpf
If it is not, then let $j_{\max}\in\intg$ be the maximal index $j$ so that there exists $$x\in \shi(-Y(K),-\widehat{\Ga}_n,S^{\tau},j)$$ with
$F_n(x)\neq0.$ Since $q_n\geq 2g$, by Lemma \ref{lem_4: top and bottom non-vanishing gradings}, we know that either $$j_{max}\leq \hat{i}_{max}^{n}-2g(S)~{\rm or}~j_{max}\geq \hat{i}_{min}^{n}+2g(S).$$

Suppose, without loss of generality, that $j_{max}\geq \hat{i}_{min}^{n}+2g(S)$ and $$x\in \shi(-Y(K),-\widehat{\Ga}_n,S^{\tau},j_{max})$$ satisfying $F_n(x)\neq0.$ By Lemma \ref{lem_4: psi pm is isomorphism, 2}, $\psi_{+,n}^{n-1,j_{max}+\hat{i}_{max}^{n-1}-\hat{i}_{max}^{n}}$ is an isomorphism, and we can take
$$y=\psi_{-,n}^{n-1,j_{max}+\hat{i}_{max}^{n-1}-\hat{i}_{max}^{n}}\circ(\psi_{+,n}^{n-1,j_{max}+\hat{i}_{max}^{n-1}-\hat{i}_{max}^{n}})^{-1}(x).$$
By Lemma \ref{lem_4: surgery triangle for knots} we know that
$$F_n(y)=F_n(x)\neq 0{\rm~and~}y\in \shi(-Y(K),-\widehat{\Ga}_n,S^{\tau},j_{max}+q).$$
This is a contradiction.
\epf

\blem\label{lem_4: q-cyclic, 2}
Suppose $n\in\intg$ satisfies $q_{n-1}-q=q_n\geq q+2g(S)$, and suppose $i,j\in \intg$ satisfying
$$\hat{i}_{min}^n+2g(S)\leq i,j\leq \hat{i}_{max}^n-2g(S){\rm~and~}i-j=q.$$
Then we have
$$\shi(-Y(K),-\widehat{\Ga}_n,S^{\tau},i)\cong \shi(-Y(K),-\widehat{\Ga}_n,S^{\tau},j).$$
\elem

\bpf
The proof is similar to the proof of Lemma \ref{lem_4: q-cyclic}.
\epf

\blem\label{lem_4: the dimension equals, 2}
Let $\mathcal{I}_-\wyk$ be defined as in Definition \ref{defn_4: essential component, 2}. Then we have
$$\dim_{\mathbb{C}}\mathcal{I}_-\wyk=\dim_{\mathbb{C}}\shi(-\widehat{Y}(1),-\delta).$$
\elem

\bpf
The proof is similar to the proof of Lemma \ref{lem_4: the dimension equals}.
\epf

\bprop\label{prop_4: G_n is an isomorphism}
Suppose $n\in\intg$ satisfies $q_{n-1}-q=q_n\geq q+2g(S)$. Let $\Pi_n$ be the projection
$$\Pi_n: \shi(-Y(K),-\widehat{\Ga}_n)\ra \mathcal{I}_-\wyk.$$
Then we have an isomorphism
$$\Pi_n\circ G_n:\shi(-\widehat{Y}(1),-\delta)\xra{\cong} \mathcal{I}_-\wyk.$$
\eprop

\bpf
It suffices to show that $\Pi_n\circ G_n$ is injective. We assume it is not true and derive a contradiction. By assumption, there exists $$x\neq0\in \shi(-\widehat{Y}(1),-\delta) \text{ with } \Pi_n\circ G_n(x)=0.$$Write
$$y=G_n(x)=\sum_{j\in\intg}y_j,{\rm~where~}y_j\in \shi(-Y(K),-\widehat{\Ga}_n,S^{\tau},j).$$
From Lemma \ref{lem_4: surgery triangle for knots} and Lemma \ref{lem_4: F_n is zero}, we know that $G_n$ is injective, and hence $y\neq0$. From the assumption, we know that
$$y_j=0{~\rm for~}\hat{i}_{max}^n-2g(S)\geq j\geq \hat{i}_{max}^n-2g(S)-q+1.$$
Also write
$$z=G_{n-1}(x)=\sum_{j\in\intg}z_j,{\rm~where~}z_j\in \shi(-Y(K),-\widehat{\Ga}_{n-1},S^{\tau},j).$$
From Lemma \ref{lem_4: surgery triangle for knots}, we know that
$$\psi_{-,n}^{n-1}(z)=y=\psi_{+,n}^{n-1}(z).$$
Suppose $j_{min}$ is the minimal grading $j$ so that
$$j>\hat{i}_{max}^n-2g(S){\rm~and~}y_j\neq0.$$
Then we know that
$$y_{j_{min}-q}=0{\rm~and~}j_{min}-q\geq\hat{i}_{min}^n+2g(s).$$
Hence by Lemma \ref{lem_4: psi pm is isomorphism, 2}, we know that
\beq
y_{j_{min}}=&\psi_{-,n}^{n-1,j_{min}-\hat{i}^{n}_{min}+\hat{i}^{n-1}_{min}}(z_{j_{min}-\hat{i}^{n}_{min}+\hat{i}^{n-1}_{min}})\\
=&\psi_{-,n}^{n-1,j_{min}-\hat{i}^{n}_{min}+\hat{i}^{n-1}_{min}}\circ(\psi_{+,n}^{n-1,j_{min}-\hat{i}^{n}_{min}+\hat{i}^{n-1}_{min}})^{-1}(y_{j_{min}-q})\\
=&0.
\eeq
This implies that $y_j=0$ for all $j\geq \hat{i}_{max}^n-2g(S)-q+1$. Similarly we can prove that $y_j=0$ for all $j<\hat{i}_{max}^n-2g(S)-q+1$, and $y=0$, which contradicts the injectivity of $G_n$.
\epf

\subsection{Commutative diagrams for bypass maps}
\quad

In this subsection, we show there are some commutative diagrams for bypass maps. 
\begin{lem}[{\cite[Corollary 2.20]{li2019direct}}]\label{lem: commutative diagram 1}
For any surgery slope $q/p$, consider the bypass maps $\psi_{+,*}^*$ and $\psi_{-,*}^*$ in Proposition \ref{prop_4: bypass exact triangle for Ga_n-hat sutures}. For any integer $n\in\mathbb{Z}$, we have the following commutative diagram.
    \begin{equation}\label{eq: commutative diagram 1}
\xymatrix@R=3ex{
\shi(-Y(K),-\widehat{\Ga}_{n})\ar[rr]^{\psi_{-,n+1}^{n}}\ar[dd]_{\psi_{+,n+1}^n}&&\shi(-Y(K),-\widehat{\Ga}_{n+1})\ar[dd]^{\psi_{+,n+2}^{n+1}}\\
&&\\
\shi(-Y(K),-\widehat{\Ga}_{n+1})\ar[rr]^{\psi_{-,n+2}^{n+1}}&&\shi(-Y(K),-\widehat{\Ga}_{n+2})
}
\end{equation}
\begin{proof}
In Subsection \ref{subsec: general bypass}, we reinterpreted bypass maps by contact gluing maps. So the composition of bypass maps becomes the composition of contact gluing maps. To verify the commutative diagram, it suffices to verify that two contact structures coming from different bypasses are actually the same. Thus, it is free to change the basis of $H_1(T^2)$. It suffices to verify a special case $q/p=1/0$ and $n=0$. Then it follows from \cite[Lemma 4.14]{honda2000classification} that the contact structures are the same.

\end{proof}
\end{lem}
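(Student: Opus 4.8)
The plan is to exploit the reinterpretation of bypass maps as contact gluing maps from Subsection~\ref{subsec: general bypass}, together with the functoriality of the instanton contact gluing map proved in \cite{li2018gluing}. Recall that a bypass attachment along an arc $\al\subset\partial M$ is realized by gluing to $(M,\ga_1)$ a collar $[1,2]\times\partial M$ carrying a particular contact structure $\xi$, convex at both ends, with dividing set $(-\ga_1)\sqcup\ga_2$, and the bypass map is the gluing map $\Phi_{\xi}$. Functoriality says that concatenating two such collars induces the composition of gluing maps, i.e. $\Phi_{\xi'}\circ\Phi_{\xi}\doteq\Phi_{\xi\cup\xi'}$, where $\xi\cup\xi'$ is the stacked contact structure on the concatenated collar; the projective ambiguity $\doteq$ is harmless since $\shi$ is only defined up to a unit. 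Applying this to the square, the composite $\psi_{+,n+2}^{n+1}\circ\psi_{-,n+1}^{n}$ is $\doteq\Phi_{\Xi_1}$ and the composite $\psi_{-,n+2}^{n+1}\circ\psi_{+,n+1}^{n}$ is $\doteq\Phi_{\Xi_2}$, where $\Xi_1$ and $\Xi_2$ are contact structures on a collar $T^2\times[0,1]$ of $\partial(-Y(K))$, convex at both ends with dividing slopes $-\widehat{\Ga}_n$ and $-\widehat{\Ga}_{n+2}$, obtained by stacking a ``$\psi_-$''-collar beneath a ``$\psi_+$''-collar, respectively a ``$\psi_+$''-collar beneath a ``$\psi_-$''-collar.

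It therefore suffices to show that $\Xi_1$ and $\Xi_2$ are isotopic rel boundary as contact structures on $T^2\times[0,1]$. The key observation is that this is a purely contact-topological statement about $T^2\times I$ which does not see the identification of $\partial Y(K)$ with the standard torus: applying an element of $SL_2(\intg)$ to $H_1(T^2)$ we may normalize the triple of dividing slopes. By the Farey-graph description of $\widehat{\Ga}_n,\widehat{\Ga}_{n+1},\widehat{\Ga}_{n+2}$ (cf. Definition~\ref{defn: xi and yi} and Remark~\ref{curve invariant}) the corresponding slopes span a lattice triangle with no interior lattice point, and there are only two such triangles up to the $SL_2(\intg)$-action, matching the two bypass-triangle types of Proposition~\ref{prop_4: bypass exact triangle for Ga_n-hat sutures}. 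Hence it is enough to verify the two universal local models, the representative one being $q/p=1/0$, $n=0$, where $\widehat{\Ga}_0,\widehat{\Ga}_1,\widehat{\Ga}_2$ are the explicit sutures $\Ga_0,\Ga_1,\Ga_2$ and the two orderings of the two basic bypass attachments yield the same tight contact structure on $T^2\times I$ by Honda's classification, specifically \cite[Lemma~4.14]{honda2000classification}. Thus $\Phi_{\Xi_1}\doteq\Phi_{\Xi_2}$ and the square commutes.

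I expect the main obstacle to be the careful bookkeeping of orientations. Since everything is done on $-Y(K)$ and the conventions for $\psi_+$ versus $\psi_-$ switch under orientation reversal of the suture (Remark~\ref{rem_2: pm switches according to orientations of the suture}), one must be precise about which collar sits on which side, with which dividing set, so that the stacked structures $\Xi_1,\Xi_2$ genuinely share the same boundary data and the reduction to a single $SL_2(\intg)$-normal form is legitimate. A secondary point needing care is confirming that the ``triangle with no interior lattice point'' combinatorics of Remark~\ref{curve invariant} really does reduce all instances (every $n\in\intg$ and every slope $q/p$) to the finitely many model pictures; once those are identified with the $q/p=1/0$ case, the remaining input is exactly \cite[Lemma~4.14]{honda2000classification}, which may be quoted directly.
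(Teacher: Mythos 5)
Your proposal is correct and follows essentially the same route as the paper: reinterpret the bypass maps as contact gluing maps, use functoriality of the gluing maps from \cite{li2018gluing} to reduce the square to comparing two stacked contact structures on $T^2\times I$, normalize the slopes by a change of basis of $H_1(T^2)$ to the case $q/p=1/0$, $n=0$, and conclude with \cite[Lemma 4.14]{honda2000classification}. The extra remarks on Farey-triangle combinatorics and orientation bookkeeping are consistent with, and only elaborate on, the paper's argument.
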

\begin{lem}\label{lem: commutative diagram 2}
For any surgery slope $q/p$, consider the bypass maps $\psi_{+,*}^*$ and $\psi_{-,*}^*$ in Proposition \ref{prop_4: bypass exact triangle for Ga_n-hat sutures}. For any $n\in\mathbb{Z}$, we have two commutative diagrams
\begin{equation}\label{eq: commutative diagram 2a}
\xymatrix@R=6ex{
\shi(-Y(K),-\widehat{\Ga}_{n})\ar[rd]_{\psi_{+,\mu}^n}\ar[rr]^{\psi_{-,n+1}^n}&&\shi(-Y(K),-\widehat{\Ga}_{n+1})\ar[ld]^{\psi_{+,\mu}^{n+1}}\\
&\shi(-\widehat{Y}(K),-\widehat{\Ga}_{\mu})&
}
\end{equation}
and
\begin{equation}\label{eq: commutative diagram 2b}
\xymatrix@R=6ex{
\shi(-Y(K),-\widehat{\Ga}_{n})\ar[rr]^{\psi_{-,n+1}^n}&&\shi(-Y(K),-\widehat{\Ga}_{n+1})\\
&\shi(-\widehat{Y}(K),-\widehat{\Ga}_{\mu})\ar[ul]^{\psi_{+,n}^\mu}\ar[ur]_{\psi_{+,n+1}^\mu}&
}
\end{equation}
The similar commutative diagrams hold if we switch the roles of $\psi_{+,*}^*$ and $\psi_{-,*}^*$.
\end{lem}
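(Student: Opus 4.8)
The plan is to repeat, in this more general setting, the contact-geometric argument used to prove Lemma \ref{lem: commutative diagram 1}. First I would use the equivalence recalled in Subsection \ref{subsec: general bypass} to reinterpret every bypass map occurring in (\ref{eq: commutative diagram 2a}) and (\ref{eq: commutative diagram 2b}) --- namely $\psi_{-,n+1}^{n}$, $\psi_{+,\mu}^{n}$, $\psi_{+,\mu}^{n+1}$, $\psi_{+,n}^{\mu}$, $\psi_{+,n+1}^{\mu}$ --- as an instanton contact gluing map $\Phi_{\xi}$ in the sense of \cite{li2018gluing}. By the functoriality of these gluing maps, composing two bypass maps corresponds to stacking the two collars $[1,2]\times\partial Y(K)$ realizing the two bypass attachments, so that each of the two composites in a given diagram becomes the gluing map of a single contact structure on a product $T^{2}\times I\cong[1,3]\times\partial Y(K)$ whose convex boundary carries the prescribed dividing sets (for instance, in (\ref{eq: commutative diagram 2a}), $-\widehat{\Ga}_{n}$ on one side and $-\widehat{\Ga}_{\mu}$ on the other for both composites). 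Since $\Phi_{\xi}$ depends only on $\xi$ rel boundary and is well-defined up to a unit, each commutative square or triangle in the statement reduces to the purely contact-topological assertion that the two tight contact structures on $T^{2}\times I$ assembled from the two prescribed sequences of bypass layers are isotopic rel boundary.

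Next I would normalize. By Definition \ref{defn_4: Ga_n-hat sutures}, Definition \ref{defn: xi and yi}, and the discussion in Remark \ref{curve invariant}, the triple of slopes $\{\widehat{\Ga}_{n},\widehat{\Ga}_{n+1},\widehat{\Ga}_{\mu}\}$ bounds a lattice triangle with no interior lattice point, and the two bypasses involved are Honda's two basic bypasses of \cite[Section 4.3]{honda2000classification}. The mapping class group of $\partial Y(K)=T^{2}$ acts transitively on such triples, and both bypass attachments and their gluing maps are natural under diffeomorphisms of $\partial Y(K)$; hence it suffices to establish the isotopy of contact structures for one representative, for which I would take $q/p=1/0$ and $n=0$, so that $\widehat{\Ga}_{n}=\Ga_{0}=\ga_{\lambda}$, $\widehat{\Ga}_{n+1}=\Ga_{1}=\ga_{\lambda-\mu}$, and $\widehat{\Ga}_{\mu}=\Ga_{\mu}=\ga_{\mu}$ are the standard sutures in Honda's picture (\textit{c.f.} Figure \ref{bypass_arc}). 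In this normalized case the equality of the two composite contact structures on $T^{2}\times I$ is exactly a computation of Honda, and I would cite \cite[Lemma 4.14]{honda2000classification}, precisely as in the proof of Lemma \ref{lem: commutative diagram 1}.

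For the final sentence of the lemma --- the versions obtained by exchanging $\psi_{+,*}^{*}$ and $\psi_{-,*}^{*}$ --- I would use the symmetry pointed out in Remark \ref{curve invariant}: the diffeomorphism $-{\rm id}$ of $T^{2}$ fixes every suture $\ga_{(x,y)}$ up to isotopy (it interchanges the two oppositely oriented components) but swaps Honda's two basic bypasses, hence interchanges $\psi_{+,*}^{*}$ with $\psi_{-,*}^{*}$ while keeping $\widehat{\Ga}_{n}$, $\widehat{\Ga}_{n+1}$, $\widehat{\Ga}_{\mu}$ in place. Applying the already-established diagrams through this diffeomorphism, again using naturality of bypass and gluing maps, yields the $\psi_{-}$-versions of (\ref{eq: commutative diagram 2a}) and (\ref{eq: commutative diagram 2b}).

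The step I expect to be the main obstacle is the bookkeeping in the first paragraph: one must keep careful track of which dividing set lands on which boundary component of the stacked collar, of the orientation reversals coming both from the $-M$ convention and from the fact that the bypasses here are attached from the exterior of $Y(K)$, and of the fact that a ``positive'' versus ``negative'' bypass label depends on orientations (Remark \ref{rem_2: pm switches according to orientations of the suture}); only once the two composites are correctly identified as gluing maps of two tight contact structures on $T^{2}\times I$ with identical convex boundary can one legitimately invoke Honda's classification. The reduction to the slope $1/0$ and the appeal to \cite[Lemma 4.14]{honda2000classification} are then routine.
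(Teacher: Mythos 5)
Your proposal is correct and follows essentially the same route as the paper: the paper's proof likewise reinterprets the bypass maps as contact gluing maps, reduces via a change of basis of $H_1(T^2)$ to the case $q/p=1/0$, $n=0$, and then invokes Honda's classification of tight contact structures on $T^2\times I$ (\cite[Lemma 4.14]{honda2000classification}), exactly as in Lemma \ref{lem: commutative diagram 1}. Your additional remarks on the stacking bookkeeping and the $\pm$-switching symmetry are consistent elaborations of what the paper leaves implicit.
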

\brem
The bypass maps in Lemma \ref{lem: commutative diagram 2} are from different bypass exact triangles.
\erem
\begin{proof}[Proof of Lemma \ref{lem: commutative diagram 2}]
Similar to the proof of Lemma \ref{lem: commutative diagram 1}, this lemma follows from Honda's classification of tight contact structures on $T^2\times I$ \cite[Lemma 4.14]{honda2000classification}.


\end{proof}
\begin{cor}\label{cor: commutative diagram 3}
For any surgery slope $q/p$, consider the bypass maps $\psi_{+,*}^*$ and $\psi_{-,*}^*$ in Proposition \ref{prop_4: bypass exact triangle for Ga_n-hat sutures}. For any $i,j\in\mathbb{Z}$, we have the following commutative diagrams related to $\psi_{+,*}^*$ and $\psi_{-,*}^*$, respectively.
    \begin{equation}\label{eq: commutative diagram 3}
\xymatrix@R=3ex{
\shi(-Y(K),-\widehat{\Ga}_{\mu})\ar[rr]^{\psi_{\pm,j}^{\mu}}\ar[dd]_{\psi_{\pm,i}^\mu}&&\shi(-Y(K),-\widehat{\Ga}_{j})\ar[dd]^{\psi_{\pm,\mu}^{j}}\\
&&\\
\shi(-Y(K),-\widehat{\Ga}_{i})\ar[rr]^{\psi_{\pm,\mu}^{i}}&&\shi(-Y(K),-\widehat{\Ga}_{\mu})
}
\end{equation}
\end{cor}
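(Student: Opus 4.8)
The claim to establish is the commutative diagram in Corollary~\ref{cor: commutative diagram 3}, which asserts that the various bypass maps $\psi_{\pm,*}^{\mu}$ and $\psi_{\pm,\mu}^{*}$ between $\shi(-Y(K),-\widehat{\Ga}_\mu)$ and $\shi(-Y(K),-\widehat{\Ga}_i)$, $\shi(-Y(K),-\widehat{\Ga}_j)$ fit into the square \eqref{eq: commutative diagram 3}. The plan is to deduce this directly from Lemma~\ref{lem: commutative diagram 2} together with the fact (recalled in Subsection~\ref{subsec: general bypass}) that bypass maps are contact gluing maps and hence composable, so that a composite of two bypass maps is again the gluing map associated to the concatenated contact structure on a collar $T^2\times I$. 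Once the maps in sight are all contact gluing maps, the commutativity of \eqref{eq: commutative diagram 3} reduces to showing that two tight contact structures on $T^2\times I$ (one built from the ``$\psi_{\pm,i}^\mu$ then $\psi_{\pm,\mu}^i$'' concatenation, the other from ``$\psi_{\pm,j}^\mu$ then $\psi_{\pm,\mu}^j$'') are isotopic rel boundary, which is exactly the kind of statement covered by Honda's classification of tight contact structures on $T^2\times I$ in \cite[Lemma 4.14]{honda2000classification}.

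The key steps, in order, are as follows. First I would observe that both composites $\psi^{i}_{\pm,\mu}\circ\psi^{\mu}_{\pm,i}$ and $\psi^{j}_{\pm,\mu}\circ\psi^{\mu}_{\pm,j}$ are endomorphisms of $\shi(-Y(K),-\widehat{\Ga}_\mu)$ induced by gluing a collar $T^2\times I$ with a fixed dividing set ($\widehat{\Ga}_\mu$ on both ends) carrying the concatenated contact structure; similarly the ``mixed'' composites $\psi^{j}_{\pm,\mu}\circ\psi^{\mu}_{\pm,j}$ vs.\ $\psi^{i}_{\pm,\mu}\circ\psi^{\mu}_{\pm,i}$ realize two a priori different contact structures on the same $T^2\times I$. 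Then I would invoke the functoriality of instanton contact gluing maps from \cite{li2018gluing}: isotopic contact structures on $T^2\times I$ induce equal gluing maps (up to the usual unit ambiguity), so it suffices to check the two contact structures are isotopic. Third, exactly as in the proofs of Lemma~\ref{lem: commutative diagram 1} and Lemma~\ref{lem: commutative diagram 2}, since contact structures on $T^2\times I$ do not see the particular basis of $H_1(T^2)$, I may change basis freely and reduce to a single model case — say $q/p=1/0$ with $i,j$ small consecutive integers — where the identification of the two contact structures is Honda's \cite[Lemma 4.14]{honda2000classification}. Finally, I would reassemble: combining the triangle \eqref{eq: commutative diagram 2a}, its twin \eqref{eq: commutative diagram 2b}, and the square \eqref{eq: commutative diagram 1}, one traces the boundary of \eqref{eq: commutative diagram 3} and uses the two Lemmas to fill in the interior, concluding commutativity.

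The main obstacle I anticipate is bookkeeping the contact-geometric normalization: bypass arcs here are attached from the \emph{exterior} of $Y(K)$ (as emphasized after Figure~\ref{fig: the new Ga_n}), and the orientations of the sutures are reversed relative to the ``interior'' pictures, so one must be careful that the concatenation of collars is taken with consistent co-orientations and that the ``$+$'' versus ``$-$'' bypass labels match up after the basis change. Getting the reduction to the model case genuinely canonical — i.e., making sure the homeomorphism of $T^2$ used to change basis carries $\widehat{\Ga}_\mu$, $\widehat{\Ga}_i$, $\widehat{\Ga}_j$ to the standard curves appearing in Honda's lemma — is where the care is needed; but this is the same maneuver already used in Lemma~\ref{lem: commutative diagram 1} and Lemma~\ref{lem: commutative diagram 2}, so no new idea is required. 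Everything else (existence of the exact triangles, the grading behavior) is already in place from Proposition~\ref{prop_4: bypass exact triangle for Ga_n-hat sutures} and the preceding lemmas.
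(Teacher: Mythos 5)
Your argument, as written, has a genuine gap at the step it leans on most heavily. The paper's proof is purely formal and uses only Lemma \ref{lem: commutative diagram 2}: for $i=j+1$, diagram (\ref{eq: commutative diagram 2b}) (with $n=j$) gives $\psi^{\mu}_{+,j+1}=\psi^{j}_{-,j+1}\circ\psi^{\mu}_{+,j}$, and diagram (\ref{eq: commutative diagram 2a}) (with $n=j$) gives $\psi^{j}_{+,\mu}=\psi^{j+1}_{+,\mu}\circ\psi^{j}_{-,j+1}$, so both ways around the square (\ref{eq: commutative diagram 3}) equal the single composite $\psi^{j+1}_{+,\mu}\circ\psi^{j}_{-,j+1}\circ\psi^{\mu}_{+,j}$; arbitrary $i,j$ then follow by chaining through consecutive integers, and the $\psi_{-,*}^{*}$ case is symmetric. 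Your closing sentence gestures at exactly this (``reassemble using (\ref{eq: commutative diagram 2a}), (\ref{eq: commutative diagram 2b})\dots''), but you never state the factorization, and instead the weight of your proposal rests on showing that the two stacked contact structures on $T^2\times I$ --- the one whose dividing set passes through the intermediate slope $\widehat{\Ga}_i$ and the one through $\widehat{\Ga}_j$ --- are isotopic after a change of basis, by \cite[Lemma 4.14]{honda2000classification}. That reduction fails as stated: a diffeomorphism of $T^2$ can normalize $\hat{\mu}$ and one further slope, but $i$ and $j$ are arbitrary, so you cannot bring both composites into ``a single model case with $i,j$ small consecutive''; to compare slopes that are far apart in the Farey sense you would still need an induction through intermediate slopes, which is precisely the algebraic argument above.

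Moreover, the isotopy claim is not something one can read off from boundary data plus an appeal to the classification. Each composite is a gluing map for a layer $T^2\times I$ with dividing set $\widehat{\Ga}_{\mu}$ on both ends, but such stacked bypass layers are not minimally twisting, and their isotopy class --- indeed their tightness --- depends delicately on the slopes and signs involved: by Corollary \ref{cor: commutative diagram 3b} the same-sign composite taken in the other order, $\psi^{\mu}_{+,n}\circ\psi^{n}_{+,\mu}$, is the zero map, so the corresponding stacked layer is overtwisted even though it has the same kind of boundary data. Hence invoking Honda's classification without identifying which tight structure each composite realizes does not close the argument. The fix is simply to run the two-line computation from Lemma \ref{lem: commutative diagram 2}; note that Lemma \ref{lem: commutative diagram 1} is not actually needed for this corollary.
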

\begin{proof}
The commutative diagram related to $\psi_{+,*}^*$ follows from (\ref{eq: commutative diagram 2a}) and (\ref{eq: commutative diagram 2b}). Explicitly, for $i=j+1$, both compositions of maps are equal to $$\psi_{+,\mu}^{j+1}\circ\psi^n_{-,j+1}\circ\psi_{+,j}^\mu.$$The other commutative diagram follows from Lemma \ref{lem: commutative diagram 2} similarly.
\end{proof}
\begin{cor}\label{cor: commutative diagram 3b}
For any surgery slope $q/p$, consider the bypass maps $\psi_{+,*}^*$ and $\psi_{-,*}^*$ in Proposition \ref{prop_4: bypass exact triangle for Ga_n-hat sutures}. For any $n\in\mathbb{Z}$, we have $$\psi_{+,\mu}^{n}\circ\psi_{-,n}^\mu=\psi_{-,\mu}^{n}\circ\psi_{+,n}^{\mu}=0$$and$$\psi_{+,n}^{\mu}\circ\psi_{+,\mu}^n=\psi_{-,n}^{\mu}\circ\psi_{-,\mu}^{n}=0$$
\end{cor}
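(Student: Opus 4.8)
The plan is to deduce the two families of vanishing relations directly from the bypass exact triangles of Proposition \ref{prop_4: bypass exact triangle for Ga_n-hat sutures} together with the commutative diagrams of Lemma \ref{lem: commutative diagram 2} and Corollary \ref{cor: commutative diagram 3}; no new contact-geometric input should be needed. Recall that for each $n$ there are \emph{two} bypass triangles through the sutures $\widehat{\Ga}_n$, $\widehat{\Ga}_{n+1}$, $\widehat{\Ga}_\mu$: one built from $\psi_{+,*}^*$ and one from $\psi_{-,*}^*$. In any exact triangle the composition of two consecutive maps is zero; the only issue is that in each single triangle the three maps do not all have the same sign decoration, so I cannot read off, say, $\psi_{+,\mu}^n\circ\psi_{-,n}^\mu=0$ from one triangle alone. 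The mechanism I would use to repair this is exactly the ``sign-switching'' commutativity: the diagrams (\ref{eq: commutative diagram 2a}) and (\ref{eq: commutative diagram 2b}) allow one to trade a $\psi_-$ map along the $\widehat{\Ga}_n\to\widehat{\Ga}_{n+1}$ edge for $\psi_+$ maps through $\widehat{\Ga}_\mu$, and vice versa.

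For the first pair of identities, I would argue as follows. Consider $\psi_{+,\mu}^{n}\circ\psi_{-,n}^{\mu}$. Using (\ref{eq: commutative diagram 2b}) with indices shifted (the version relating $\widehat{\Ga}_{n-1}\to\widehat{\Ga}_n$ through $\widehat{\Ga}_\mu$), one can express $\psi_{-,n}^\mu$ — or rather a composite involving it — through the $\psi_-$ bypass triangle containing the edge $\widehat{\Ga}_{n-1}\to\widehat{\Ga}_n$; alternatively, and more cleanly, I would apply Corollary \ref{cor: commutative diagram 3} with $i=n$, $j=n$, which forces $\psi_{\pm,\mu}^n\circ\psi_{\pm,n}^\mu$ to equal the composition around the other side of the square — but when $i=j$ that other side degenerates, pinning the map. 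The cleanest route is probably: in the $\psi_+$ bypass triangle through $\widehat{\Ga}_n,\widehat{\Ga}_{n+1},\widehat{\Ga}_\mu$, exactness gives $\psi_{+,n}^\mu\circ\psi_{+,\mu}^{n+1}=0$ and $\psi_{+,\mu}^{n+1}\circ\psi_{+,n+1}^n=0$ and $\psi_{+,n+1}^n\circ\psi_{+,n}^\mu=0$. Then the commutative diagrams of Lemma \ref{lem: commutative diagram 2} let me substitute $\psi_{-,n+1}^n$ for the appropriate composition $\psi_{+,\mu}^{n+1}$-followed-by-something, or express $\psi_{+,\mu}^n\circ\psi_{-,n}^\mu$ as $\psi_{+,\mu}^n\circ$(a composition that, after using (\ref{eq: commutative diagram 2b}), becomes a consecutive pair in one exact triangle). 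Carrying out this bookkeeping carefully yields $\psi_{+,\mu}^{n}\circ\psi_{-,n}^\mu=0$; the identity $\psi_{-,\mu}^{n}\circ\psi_{+,n}^{\mu}=0$ follows by the symmetric argument switching $+\leftrightarrow-$, which is legitimate since Lemma \ref{lem: commutative diagram 2} explicitly asserts the analogous diagrams with the roles of $\psi_{+,*}^*$ and $\psi_{-,*}^*$ interchanged.

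For the second pair, $\psi_{+,n}^{\mu}\circ\psi_{+,\mu}^n=0$ and $\psi_{-,n}^{\mu}\circ\psi_{-,\mu}^{n}=0$, I expect a more direct proof: these are ``$\widehat{\Ga}_\mu\to\widehat{\Ga}_n\to\widehat{\Ga}_\mu$'' round trips with matching sign. Apply Corollary \ref{cor: commutative diagram 3} with $i=j=n$: the square (\ref{eq: commutative diagram 3}) degenerates into the statement that $\psi_{+,\mu}^n\circ\psi_{+,n}^\mu$ equals itself composed trivially, so instead I would take $j=n$, $i=n+1$ (or use the triangle directly): in the $\psi_+$ triangle through $\widehat{\Ga}_\mu,\widehat{\Ga}_n$ and whatever the third vertex is, exactness already gives one such vanishing, and Corollary \ref{cor: commutative diagram 3} propagates it to all $n$. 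I anticipate that the main obstacle is purely organizational — keeping straight which of the several maps $\psi_{\pm,*}^*$ comes from which of the two triangles at each index, and ensuring the index shifts in Lemma \ref{lem: commutative diagram 2} are applied consistently — rather than any conceptual difficulty; once the correct triangle and the correct commuting square are lined up, each identity is a one-line consequence of exactness. I would therefore write the proof as four short paragraphs, one per identity, each citing the relevant exact triangle from Proposition \ref{prop_4: bypass exact triangle for Ga_n-hat sutures} and the relevant square from Lemma \ref{lem: commutative diagram 2} or Corollary \ref{cor: commutative diagram 3}.
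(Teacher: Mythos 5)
Your mechanism for the first pair of identities is exactly the paper's: by the commutative triangle (\ref{eq: commutative diagram 2a}) one has $\psi_{+,\mu}^{n}=\psi_{+,\mu}^{n+1}\circ\psi_{-,n+1}^{n}$, hence $\psi_{+,\mu}^{n}\circ\psi_{-,n}^{\mu}=\psi_{+,\mu}^{n+1}\circ\bigl(\psi_{-,n+1}^{n}\circ\psi_{-,n}^{\mu}\bigr)=0$, the bracketed pair being two consecutive maps in the $\psi_{-,*}^*$ bypass triangle of Proposition \ref{prop_4: bypass exact triangle for Ga_n-hat sutures}; the paper records precisely this computation and dismisses the other identities as ``similar''. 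Your hesitation about whether (\ref{eq: commutative diagram 2a}) or (\ref{eq: commutative diagram 2b}) does the rewriting is harmless, since Lemma \ref{lem: commutative diagram 2} also asserts the sign-switched diagrams: for instance the switched and index-shifted (\ref{eq: commutative diagram 2b}) gives $\psi_{-,n}^{\mu}=\psi_{+,n}^{n-1}\circ\psi_{-,n-1}^{\mu}$, which yields the same vanishing through the $\psi_{+,*}^*$ triangle at index $n-1$.

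The genuine gap is in your treatment of the second pair. You expect $\psi_{+,n}^{\mu}\circ\psi_{+,\mu}^{n}=0$ to be ``more direct'', with exactness of a single $\psi_+$ triangle already supplying it; it does not, because the two same-sign maps through $\widehat{\Ga}_{\mu}$ live in \emph{adjacent} triangles: $\psi_{+,\mu}^{n}$ is a map of the triangle indexed by $n-1$ (whose next map is $\psi_{+,n-1}^{\mu}$, landing in $\widehat{\Ga}_{n-1}$), while $\psi_{+,n}^{\mu}$ belongs to the triangle indexed by $n$, so the composition $\widehat{\Ga}_n\to\widehat{\Ga}_\mu\to\widehat{\Ga}_n$ is never a consecutive pair in one exact triangle. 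Corollary \ref{cor: commutative diagram 3} cannot ``propagate'' the statement either: its square identifies the round trips $\widehat{\Ga}_\mu\to\widehat{\Ga}_i\to\widehat{\Ga}_\mu$, not $\widehat{\Ga}_n\to\widehat{\Ga}_\mu\to\widehat{\Ga}_n$, and your $i=j=n$ attempt degenerates for the same reason you noticed. The repair is the same one-line substitution as for the first pair, which is what the paper's ``similarly'' means: by (\ref{eq: commutative diagram 2a}), $\psi_{+,\mu}^{n}=\psi_{+,\mu}^{n+1}\circ\psi_{-,n+1}^{n}$, so $\psi_{+,n}^{\mu}\circ\psi_{+,\mu}^{n}=\bigl(\psi_{+,n}^{\mu}\circ\psi_{+,\mu}^{n+1}\bigr)\circ\psi_{-,n+1}^{n}=0$ by exactness of the $\psi_{+,*}^*$ triangle indexed by $n$ (equivalently, rewrite $\psi_{+,n}^{\mu}$ via the shifted (\ref{eq: commutative diagram 2b})). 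With that correction all four identities follow uniformly from Lemma \ref{lem: commutative diagram 2} together with exactness, which is the paper's argument.
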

\begin{proof}
By Lemma \ref{lem: commutative diagram 2} and the exactness, we have $$\psi_{+,\mu}^{n}\circ\psi_{-,n}^\mu=\psi_{+,\mu}^{n+1}\circ\psi^n_{-,n+1}\circ\psi_{-,n}^\mu=0.$$
Other arguments follow from Lemma \ref{lem: commutative diagram 2} and the exactness similarly.
\end{proof}
\brem\label{curve invariant2}
The above commutative diagrams can be illustrated by the method described in Remark \ref{curve invariant}. The illustration of the special cases in the proofs is shown in Figure \ref{curve1}. Note that vector spaces are denoted by their sutures (we omit the minus signs), and all maps are bypass maps. They are grading preserving and commute with $F_*$ and $G_*$ by Proposition \ref{prop_4: graded bypass for Ga_n-hat sutures} and Lemma \ref{lem_4: surgery triangle for knots}, respectively.
\erem
\begin{figure}[ht]
\centering
\includegraphics[width=0.8\textwidth]{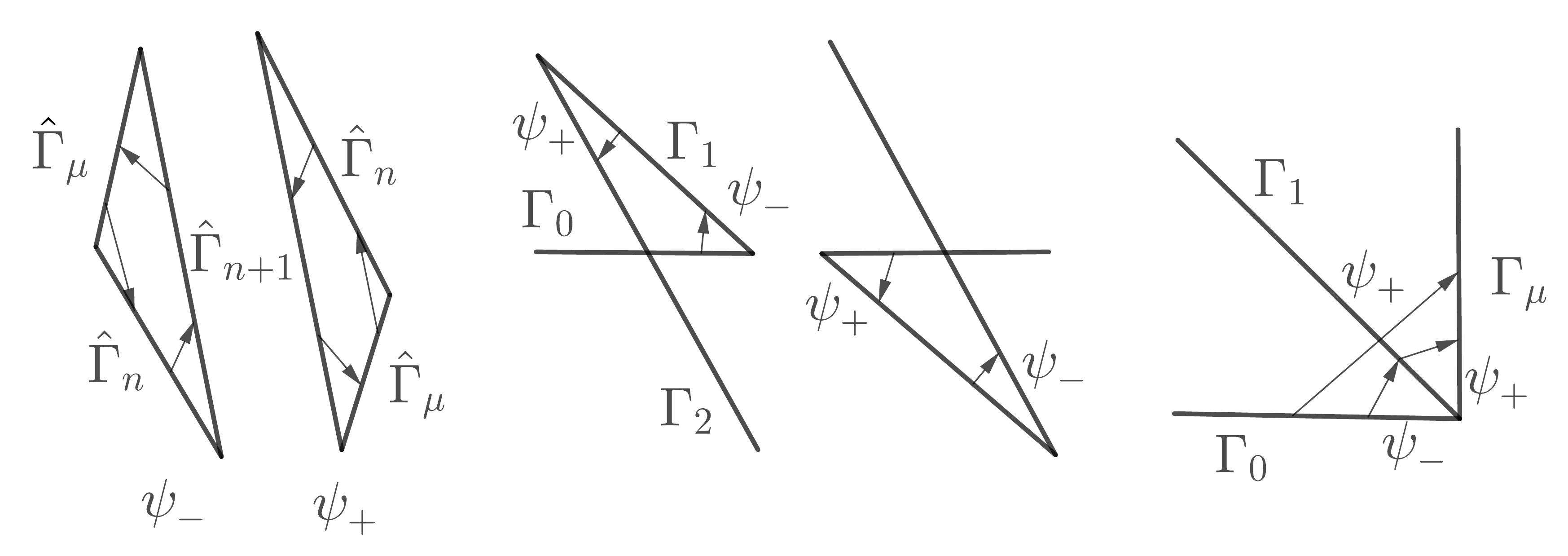}
\caption{Left, bypass maps; middle, illustration of (\ref{eq: commutative diagram 1}); right, illustration of (\ref{eq: commutative diagram 2a}).}
\label{curve1}
\end{figure}

\subsection{The stabilization of integral surgeries}\label{subsec: surgeries on knots}
\quad

Throughout this subsection, suppose $K\subset Y$ is a null-homologous knot and $S$ is a minimal genus Seifert surface of $K$. The stabilization $S^{\tau}$ of $S$ is chosen in Definition \ref{defn_4: S tau}. Since we might work with different surgery slopes, we will use the notation $\ga_{(p,q)}$ in Definition \ref{defn_4: Ga_n-hat sutures} to denote the suture on the knot complement. The maximal and minimal gradings of $\shi(-Y(K),-\ga_{(p,q)},S^{\tau})$ are described explicitly in Lemma \ref{lem_4: top and bottom non-vanishing gradings} and we write them as
$$i_{max}^q=\lceil\frac{q-1}{2}\rceil+g(S){\rm~and~} i_{min}^q=\lceil-\frac{q-1}{2}\rceil-g(S).$$
Note that  they are independent of $p$.

Since we will deal with different surgeries in the current subsection, we will write out the surgery slope explicitly: for the 3-manifold obtained by the $q/p$-surgery, we write $\widehat{Y}_{q/p}$. For the special class of sutures, we write $\widehat{\Ga}_n(q/p)$ instead of $\widehat{\Ga}_n$. For bypass maps, we write $\psi_{\pm,n}^{n+1}(q/p)$. However, if $q/p=1/0$, we still omit it from the notation, then in this case, we simply write $\widehat{\Ga}_n$ as $\Ga_n$.

In this subsection, we deal with large integral surgeries. In this case, we know that
$\Ga_n=\ga_{(-1,n)}$. Hence we have the following by Lemma \ref{lem_4: q-cyclic} and Proposition \ref{prop_4: F_n is an isomorphism}. (Note $(p,q)=(0,1)$.)

\blem\label{lem_4: structure of gamma_(-1,n)}
For any $n>2g(S)$ and $i\in\intg$ so that
$$\lceil-\frac{n-1}{2}\rceil+g(S)\leq i\leq \lceil\frac{n-1}{2}\rceil-g(S),$$
we have
$$\shi(-Y(K),-\Ga_{n},S^{\tau},i)\cong \shi(-Y(1),-\delta).$$
\elem

\brem
A more direct explanation of Lemma \ref{lem_4: structure of gamma_(-1,n)} is that, apart from the top $2g(S)$ and the bottom $2g(S)$ gradings, the vector spaces in all gradings are isomorphic to $\shi(-Y(1),-\delta)$.
\erem
Next, suppose we perform a $(-n)$-surgery. We can take
$$\hat{\lambda}=(0,-1)=-\mu{~\rm and}~\hat{\mu}=(-1,n)=\lambda-n\mu.$$
Then we compute
\begin{equation}\label{eq_4: Gamma_n-hat sutures for n=0,1,2}
\widehat{\Ga}_{\mu}(-n)=\Ga_{n},~\widehat{\Ga}_0(-n)=\Ga_{\mu},~\widehat{\Ga}_1(-n)=\ga_{(-1,n-1)}=\Ga_{n-1},~\widehat{\Ga}_2(-n)=\ga_{(-2,2n-1)},
\end{equation}
and also

\begin{equation}\label{eq_4: Gamma_n-hat sutures for n=-1}
    \widehat{\Ga}_{-1}(-n)=\ga_{(-1,n+1)}=\Ga_{n+1},~\widehat{\Ga}_{-2}(-n)=\ga_{(-2,2n+1)}.
\end{equation}
Observe that
\begin{equation}\label{eq_4: relating I_+ and I_- by sutures}
\widehat{\Ga}_{-2}(-n)=\widehat{\Ga}_2(-n-1).
\end{equation}

The following is the first part of \ref{prop: all but 2g-1 summands are trivial}.
\bprop[]\label{prop_4: structure of I_+(-Y-hat,K-hat)}
Suppose integer $n\geq 2g(S)+1$, then
$$\mathcal{I}_+(-\widehat{Y}_{-n},\widehat{K},i)\cong\shi(-Y(1),-\delta)$$
for any integer $i\in[0,n-2g(S)-1]$ and $i=n-1$.
\eprop
\bpf
When $0\leq i\leq n-1-2g(S)$, we know from equality (\ref{eq_4: Gamma_n-hat sutures for n=0,1,2}), Definition \ref{defn_4: essential component}, Lemma \ref{lem_4: psi pm is isomorphism}, and Lemma \ref{lem_4: structure of gamma_(-1,n)} that
\beq
\mathcal{I}_+(-\widehat{Y}_{-n},\widehat{K},i)=&\shi(-Y(K),-\widehat{\Ga}_{2}(-n),S^{\tau},i^{2n-1}_{max}-2g(S)-i)\\
\cong&\shi(-Y(K),-\widehat{\Ga}_{\mu}(-n),S^{\tau},i^{n}_{max}-2g(S)-i)\\
=&\shi(-Y(K),-{\Ga}_{n},S^{\tau},i^{n}_{max}-2g(S)-i)\\
\cong &\shi(-Y(1),-\delta).\\
\eeq

For $i=n-1$, we know similarly that
\beq
\mathcal{I}_+(-\widehat{Y}_{-n},\widehat{K},i)=&\shi(-Y(K),-\widehat{\Ga}_{2}(-n),S^{\tau},i^{2n-1}_{max}-2g(S)-n+1)\\
\cong&\shi(-Y(K),-\widehat{\Ga}_{\mu}(-n),S^{\tau},i^{n}_{max}-2g(S))\\
=&\shi(-Y(K),-{\Ga}_{n},S^{\tau},i^{n}_{max}-2g(S))\\
\cong &\shi(-Y(1),-\delta).\\
\eeq
\epf
The following is the second part of Proposition \ref{prop: all but 2g-1 summands are trivial}.
\bprop[]\label{prop_4: -Y_n-hat and -Y_n+1-hat}
Suppose integer $n\geq 2g(S)+1$, then for any integer $i\in[0,n-1]$, we have
$$\mathcal{I}_+(-\widehat{Y}_{-n-1},\widehat{K},i+1)\cong \mathcal{I}_+(-\widehat{Y}_{-n},\widehat{K},i).$$
\eprop
\bpf
For integer $i\in[0,n-1-2g(S)]$ and $i=n-1$, we know from Proposition \ref{prop_4: structure of I_+(-Y-hat,K-hat)} that
$$\mathcal{I}_+(-\widehat{Y}_{-n-1},\widehat{K},i+1)\cong\shi(-Y(1),-\delta)\cong \mathcal{I}_+(-\widehat{Y}_{-n},\widehat{K},i)$$
For the rest $(2g(S)-1)$ gradings, we use the following commutative diagram.

\begin{equation}\label{eq_3: commutative diagram}
\xymatrix{
\shi(-Y(K),-\Ga_{n-1})\ar[r]^{\psi_{+,n}^{n-1}}&\shi(-Y(K),-\Ga_{n})\\
\shi(-Y(K),-\Ga_{n})\ar[u]^{\psi_{-,1}^{\mu}(-n)}\ar[r]^{\psi_{-,n+1}^{n}}&\shi(-Y(K),-\Ga_{n+1})\ar[u]^{\psi_{-,1}^{\mu}(-n-1)}
}
\end{equation}
This is directly from facts (\ref{eq_4: Gamma_n-hat sutures for n=0,1,2}) and (\ref{eq_4: Gamma_n-hat sutures for n=-1}), and Corollary \ref{cor: commutative diagram 3} by taking $i=1$ and $j=-1$. Note that $$\psi_{+,n}^{n-1}=\psi_{-,\mu}^1(-n),\psi_{-,n+1}^{n}=\psi_{-,-1}^{\mu}(-n),~{\rm and}~\psi_{-,1}^{\mu}(-n-1)=\psi_{-,\mu}^{-1}(-n).$$
From Proposition \ref{prop_4: graded bypass for Ga_n-hat sutures} and Definition \ref{defn_4: essential component}, we obtain a graded commutative diagram
\begin{equation*}
\xymatrix{
\shi(-Y(K),-\Ga_{n-1}, S^{\tau},i_{min}^{n-1}+n-2-i)\ar[r]^{\psi_{+,n}^{n-1}}&\shi(-Y(K),-\Ga_{n}, S^{\tau},i_{min}^{n}+n-2-i)\\
\shi(-Y(K),-\Ga_{n},S^{\tau},i^{n}_{max}+n-2g-1-i)\ar[u]^{\psi_{-,1}^{\mu}(-n)}\ar[r]^{\psi_{-,n+1}^{n}}&\shi(-Y(K),-\Ga_{n+1},S^{\tau},i^{n+1}_{max}+n-2g-1-i)\ar[u]^{\psi_{-,1}^{\mu}(-n-1)}
}
\end{equation*}
For any fixed integer $i\in [n-2g(S),n-1]$, we have a graded exact triangle
\begin{equation*}
\xymatrix{
\shi(-Y(K),-\Ga_{n-1}, S^{\tau},i_{min}^{n-1}+n-2-i)\ar[rr]&&\mathcal{I}_+(-\widehat{Y}_{-n},\widehat{K},i)\ar[dll]\\
\shi(-Y(K),-\Ga_{n},S^{\tau},i^{n}_{max}+n-2g-1-i)\ar[u]^{\psi_{-,1}^{\mu}(-n)}&&
}
\end{equation*}
Hence we know that
\begin{equation*}
\dim_{\mathbb{C}}\mathcal{I}_+(-\widehat{Y}_{-n},\widehat{K},i)=\dim_{\mathbb{C}}{\rm ker}(\psi_{-,1}^{\mu}(-n))+\dim_{\mathbb{C}}{\rm coker}(\psi_{-,1}^{\mu}(-n)).
\end{equation*}
Similarly, we know that
\begin{equation*}
\dim_{\mathbb{C}}\mathcal{I}_+(-\widehat{Y}_{-n-1},\widehat{K},i+1)=\dim_{\mathbb{C}}{\rm ker}(\psi_{-,1}^{\mu}(-n-1))+\dim_{\mathbb{C}}{\rm coker}(\psi_{-,1}^{\mu}(-n-1))
\end{equation*}

Since the maps $\psi_{+,n}^{n-1}$ and $\psi_{-,n+1}^n$ are both isomorphisms in the corresponding gradings by Lemma \ref{lem_4: psi pm is isomorphism}, we conclude that
$$\dim_{\mathbb{C}}\mathcal{I}_+(-\widehat{Y}_{-n-1},\widehat{K},i+1)=\dim_{\mathbb{C}}\mathcal{I}_+(-\widehat{Y}_{-n},\widehat{K},i).$$By five lemma, there is indeed an isomorphism between $\mathcal{I}_+(-\widehat{Y}_{-n-1},\widehat{K},i+1)$ and $\mathcal{I}_+(-\widehat{Y}_{-n},\widehat{K},i)$.
\epf

\section{Some remarks and future directions}\label{sec: future directions}

In this section, we state some remarks and further directions.

First, the condition in Theorem \ref{thm: torsion spin c decomposition} that the boundary of the Seifert surface $S$ of $\widehat{K}$ is connected can be removed by modifying the hypothesis and the statement as follows.
\benu
	\item Suppose the order of $[\widehat{K}]\in H_1(\widehat{Y})$ is $a$, \textit{i.e.}, $a$ is the minimal positive integer so that $a[K]=0$. Suppose the number of the boundary components of $S$ is $b$, and $\lambda$ is a simple closed curve on $\partial \widehat{Y}(\widehat{K})$ so that $\partial S=b\lambda$.
	\item Choose another simple closed curve $\mu$ on $\partial \widehat{Y}(\widehat{K})$ so that $\mu\cdot\lambda=-1$. Suppose the meridian of $\widehat{K}$ has homology class $(q\mu+p\lambda)$. It can be shown that $q$ is independent of the choice of $\mu$. Indeed, we know that $q=a/b$.
	\item Let definitions of $i^{y}_{max}$ and $i^y_{min}$ in Definition \ref{defn_4: top and bottom non-vanishing gradings} be replaced by $$i^y_{max}=\lceil \frac{1}{2}(yb-\chi(S))\rceil~{\rm and}~i^y_{min}=\lceil -\frac{1}{2}(yb-\chi(S))\rceil.$$
	\item All proofs of Theorem \ref{thm: torsion spin c decomposition} and Proposition \ref{prop: I sharp and SHI} apply without essential change and we will obtain a decomposition associated to $\widehat{K}$$$I^{\sharp}(\widehat{Y})\cong \bigoplus_{i=0}^{a-1}I^{\sharp}(\widehat{Y},i).$$
\eenu
Note that in the original statement of Theorem \ref{thm: torsion spin c decomposition}, the integer $q$ is indeed the order of $[\widehat{K}]$.

Second, as mentioned in Remark \ref{rem: defect}, it is not known if the decomposition of $I^{\sharp}(\widehat{Y})$ is independent of the choice of $\widehat{K}$. Explicitly, we have the following conjecture.

\begin{conj}
Suppose $(\widehat{Y},\widehat{K})$ satisfies the hypothesis of Theorem \ref{thm: torsion spin c decomposition}. Suppose further that another knot $\widehat{K}^\p\subset \widehat{Y}$ satisfies the similar conditions to those of $\widehat{K}$, and $$[\widehat{K}]=[\widehat{K}^\p]\in H_1(\widehat{Y}).$$Then there exists a grading preserving isomorphism$$\mathcal{I}_+(\widehat{Y},\widehat{K})\cong\mathcal{I}_+(\widehat{Y},\widehat{K}^\p)$$up to a $\mathbb{Z}_q$ grading shift, where $\mathcal{I}_+$ is defined as in Definition \ref{defn_4: essential component}.
\end{conj}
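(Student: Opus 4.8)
The plan is to realize both decompositions inside the complement of the two-component link $\widehat K\sqcup\widehat K'$ and to transport one $\mathbb Z_q$-grading to the other using a surface cobounding the two knots. Since $[\widehat K]=[\widehat K']\in H_1(\widehat Y)$, the null-homologous $1$-manifold $\widehat K\sqcup(-\widehat K')$ bounds an embedded compact oriented surface $F\subset\widehat Y$, which after tubing we may take to be connected. Put $M_0=\widehat Y\setminus(N(\widehat K)\cup N(\widehat K'))$, with boundary tori $T_1=\partial N(\widehat K)$ and $T_2=\partial N(\widehat K')$, and let $F_0=F\cap M_0$, a properly embedded surface meeting $T_1$ in a longitude of $\widehat K$ and $T_2$ in a longitude of $\widehat K'$. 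The first step is to equip $SHI(-M_0,-\Gamma)$, for the families of sutures $\Gamma$ on $T_1\sqcup T_2$ used below, with the $\mathbb Z$-grading induced by $F_0$ via Theorem \ref{thm_2: grading on SHI} (after the usual stabilization from Definition \ref{defn_4: S tau}), and to record its behaviour under bypass attachments and surgery triangles supported near $T_1$ or $T_2$ separately, in exact parallel with Subsection \ref{subsec: bypasses on knot complements}.

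The second step is to run the surgery machinery of Section \ref{sec: Instanton theory and knots} one cusp at a time. Dehn filling $T_2$ along the meridian $\hat\mu_2$ recovers $(\widehat Y,\widehat K)$; imposing the large-slope sutures $\widehat\Gamma_n$ of Definition \ref{defn_4: Ga_n-hat sutures} on $T_1$ and running the triangles of Lemma \ref{lem_4: surgery triangle for knots} (applied to the surgery curve on $T_2$) together with Proposition \ref{prop_4: F_n is an isomorphism} identifies a distinguished block of $SHI(-M_0,-\Gamma)$, graded by $F_0$, with $\mathcal{I}_+(\widehat Y,\widehat K)$; filling $T_1$ instead identifies another block with $\mathcal{I}_+(\widehat Y,\widehat K')$. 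By Lemma \ref{lem_4: the dimension equals} both blocks have dimension $\dim_{\mathbb C}\shi(-\widehat Y(1),-\delta)=\dim_{\mathbb C}I^\sharp(\widehat Y)$, so the content of the statement is entirely the matching of the $\mathbb Z_q$-gradings. To obtain that matching I would assemble a single commutative diagram linking the two towers of sutures through intermediate sutures on $M_0$ that carry a meridian of $\widehat K$ (resp.\ $\widehat K'$) on one component and a large-slope curve on the other, using the commutativity of bypass and surgery maps as in Subsection \ref{subsec: bypasses on knot complements} and Lemma \ref{lem_4: surgery triangle for knots}; because the $F_0$-grading is defined simultaneously on all of these, the two induced $\mathbb Z_q$-gradings on $I^\sharp(\widehat Y)$ would be forced to agree up to a single overall shift, and the shift is pinned to the canonical one by identifying the index set with cosets of $PD[\widehat K]=PD[\widehat K']$ as in the remark after Proposition \ref{prop: I sharp and SHI}.

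The main obstacle is the compatibility of the $F_0$-grading with the two Seifert-surface gradings that define $\mathcal{I}_+$. The surface $F$ is not a Seifert surface of either knot — its boundary on $T_2$ does not cap off inside $N(\widehat K')$, and its genus need not be minimal — so Theorem \ref{thm_2: grading on SHI} does not apply to it directly on either knot complement. What is needed is a comparison lemma, in the spirit of Theorem \ref{thm_2: grading shifting property} and the additivity of gradings under stabilization, describing how the grading associated to a properly embedded surface in a link complement relates, after Dehn filling one boundary component with a suture-carrying solid torus, to the grading associated to a genus-minimizing surface in the resulting knot complement, with the discrepancy an explicit integer shift depending only on the Euler characteristics involved. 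Proving this, and checking that the genus of $F$ introduces only a uniform shift rather than disrupting the range of nonvanishing gradings, is where the real difficulty lies; it is precisely the gap flagged in Remark \ref{rem: defect}. A weaker but still informative target is to show that the \emph{graded Euler characteristics} of the two decompositions coincide: this would follow from the identification of the Euler characteristics of the summands with Turaev torsion in the forthcoming work \cite{LY2021,LY2021enhanced}, since the relevant Turaev-torsion refinement depends only on the homology class of the knot.
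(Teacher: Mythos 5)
This statement is not proved in the paper at all: it is posed as an open conjecture precisely because of the issue recorded in Remark \ref{rem: defect}, namely that it is unknown whether the decomposition of $I^{\sharp}(\widehat{Y})$ depends on the choice of knot. So there is no argument of the paper's to measure your proposal against, and the only question is whether your sketch closes the gap. It does not, and you say so yourself: the entire content of the conjecture is the matching of the two $\mathbb{Z}_q$-gradings, and that matching is exactly the step you defer to an unproven ``comparison lemma'' relating the grading induced by $F_0$ to the two Seifert-surface gradings. Everything before that point (realizing both $\mathcal{I}_+$'s inside $SHI$ of the link complement, equal dimensions via Lemma \ref{lem_4: the dimension equals}) only reproduces what Theorem \ref{thm: torsion spin c decomposition} already gives, namely two decompositions of the same total space; it produces no identification of summands.

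The missing step is moreover not a routine extension of the existing machinery. The results you invoke --- Lemma \ref{lem_4: top and bottom non-vanishing gradings}, Lemma \ref{lem_4: psi pm is isomorphism}, Proposition \ref{prop_4: F_n is an isomorphism}, and Definition \ref{defn_4: essential component} itself --- all rest on $S$ being a \emph{minimal genus} Seifert surface in an \emph{irreducible} knot complement, so that the decompositions along $\pm S$ are taut and the top and bottom nonvanishing gradings are known exactly; the location of the ``middle block'' defining $\mathcal{I}_+$ is read off from these extremal gradings. Your surface $F_0$ in the link complement is neither norm-minimizing nor a Seifert surface of either knot (its boundary slope on $T_2$ is a longitude of $\widehat K'$, and its genus is uncontrolled), so Theorem \ref{thm_2: grading on SHI} gives no control on where its grading is supported, and Theorem \ref{thm_2: grading shifting property} only compares stabilizations of the \emph{same} surface, not a surface in the link complement with a Seifert surface in a filled knot complement. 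Without a genuinely new argument pinning down the support of the $F_0$-grading and its behaviour under the two Dehn fillings, the claim that the two induced $\mathbb{Z}_q$-gradings ``are forced to agree up to a single overall shift'' is an assertion, not a consequence of the cited lemmas. Your fallback about graded Euler characteristics is plausible and consistent with the forthcoming torsion computations, but it is a strictly weaker statement and does not establish the conjectured isomorphism.
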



\quad

Last, though we have had a decomposition of $I^{\sharp}(-\widehat{Y})$, we do not know if it works well with the cobordism maps. For example, let $K\subset S^3$ be a knot and let $S^3_{-n}(K)$ be obtained from $S^3$ by a $(-n)$-surgery along $K$. Then there is a natural cobordism $W$ from $S^3$ to $S^3_{-n}(K)$, which induces a cobordism map
$$I^{\sharp}(W_{-n}):I^{\sharp}(-S^3_{-n}(K))\ra I^{\sharp}(-S^3).$$
Baldwin and Sivek \cite[Section 7]{baldwin2019lspace} proved that the cobordism map decomposes in basic classes:
$$I^{\sharp}(W_{-n})=I^{\sharp}(W_{-n},t_i),$$
where $t_i: H_2(W_{-n})\ra\intg$ maps $[S^\p]$ to $(2i)$, where $[S^\p]$ is the homology class of the surface obtained by capping off of the Seifert surface of $K$. We have the following conjecture, basically saying that the decomposition of the cobordism map $I^{\sharp}(W_{-n})$ is compatible with the decomposition of $I^{\sharp}(-S^3_{-n}(K))$.
\begin{conj}
There exists an integer $N$ so that for any integer $i\in[0,n-1]$, under the identifications $I^{\sharp}(-S^3_{-n}(K))\cong\mathcal{I}_+(-S^3_{-n}(K),\widehat{K})$ and $I^{\sharp}(-S^3)\cong\mathcal{I}_+(-S^3_{1/0}(K),K)$, we have
$$I^{\sharp}(W_{-n},t_{i})=I^{\sharp}(W_{-n})|_{\mathcal{I}_+(-S^3_{-n}(K),\widehat{K},N-i)}$$and this cobordism map can be recovered by bypass maps.
\end{conj}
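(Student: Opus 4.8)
The plan is to reduce the statement to a surgery formula for the cobordism map $I^{\sharp}(W_{-n})$ that expresses it, under the identifications $I^{\sharp}(-S^3_{-n}(K))\cong\mathcal{I}_+(-S^3_{-n}(K),\widehat{K})$ and $I^{\sharp}(-S^3)\cong\mathcal{I}_+(-S^3_{1/0}(K),K)$ of Proposition \ref{prop: I sharp and SHI}, as an explicit composition of the bypass maps $\psi^{\mu}_{\pm,*}$, $\psi^{*}_{\pm,\mu}$ together with the structural maps $F_*,G_*$ of Lemma \ref{lem_4: surgery triangle for knots}. First I would cut $W_{-n}$ open: removing a ball from each end turns it into a cobordism between the balanced sutured manifolds $(-S^3(1),-\delta)$ and $(-S^3_{-n}(K)(1),-\delta)$ realizing a single $2$-handle attachment along $K$ with its canonical framing. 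By \cite[Section 3.3]{baldwin2016instanton}, on the level of closures this $2$-handle attachment is exactly the $0$-surgery map in Floer's exact triangle (Theorem \ref{thm_2: Floer's exact triangle}), hence one of the maps in the triangle of Lemma \ref{lem_4: surgery triangle for knots} for the slope $-n$. Chasing through the identifications \eqref{eq_4: Gamma_n-hat sutures for n=0,1,2} and \eqref{eq_4: Gamma_n-hat sutures for n=-1}, which relate $\widehat{\Gamma}_{\mu}(-n)$ and $\widehat{\Gamma}_2(-n)$ to the sutures used to define $\mathcal{I}_+$, and using naturality of $I^{\sharp}$-cobordism maps together with functoriality of the contact gluing (bypass) maps of Subsection \ref{subsec: general bypass}, I expect to obtain an identity of the form
$$I^{\sharp}(W_{-n})\circ F_n|_{\mathcal{I}_+}=\Psi|_{\mathcal{I}_+(-S^3_{-n}(K),\widehat{K})},$$
where $\Psi$ is an explicit composition of bypass maps carrying the $-\widehat{\Gamma}_2(-n)$--suture down to the $-\widehat{\Gamma}_{\mu}(-n)$--suture and then applying the contact $2$-handle map onto $I^{\sharp}(-S^3)$. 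This already gives the last clause of the conjecture, that the map is recovered by bypass maps.

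\textbf{Matching the two decompositions.} Both $\mathcal{I}_+(-S^3_{-n}(K),\widehat{K},i)$ and the Baldwin--Sivek summand $I^{\sharp}(W_{-n},t_i)$ are, ultimately, eigenspace pieces for the $\mu$-action of the capped-off Seifert surface. On the sutured side this is built into Theorem \ref{thm_2: grading on SHI}: the grading on $\shi(-Y(K),-\widehat{\Gamma}_2(-n))$ cutting out $\mathcal{I}_+$ is the $(2j)$-generalized eigenspace of $\mu(\bar{S})$, where $\bar{S}$ is the closed-up stabilized Seifert surface $S^{\tau}$ inside a suitable closure. On the four-dimensional side, $I^{\sharp}(W_{-n},t_i)$ is by definition the part of the cobordism map supported where the closed surface in a closure of $W_{-n}$ restricting to the cap $S'$ of $S$ evaluates to $2i$. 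The heart of the argument is to choose the closure of $W_{-n}$ and of the two ends compatibly, so that the restriction of $I^{\sharp}(W_{-n})$ to $\mathcal{I}_+(-S^3_{-n}(K),\widehat{K},j)$ is forced, by the adjunction inequality for instanton cobordism maps, to vanish unless $2j$ matches the value detected by $t_i$, after a fixed bookkeeping shift $N$ accounting for the difference between $\chi(S)$ and $\chi(\bar{S})$ in the chosen closure and for the stabilization $S^{\tau}$ versus $S$. Since the bypass maps in $\Psi$ are grading preserving (Proposition \ref{prop_4: graded bypass for Ga_n-hat sutures} and Lemma \ref{lem_4: surgery triangle for knots}) and $F_n$ is grading preserving, transporting the grading through $F_n$ then yields $I^{\sharp}(W_{-n},t_i)=I^{\sharp}(W_{-n})|_{\mathcal{I}_+(-S^3_{-n}(K),\widehat{K},N-i)}$ on the ``good'' gradings $i\in[0,n-2g(S)-1]\cup\{n-1\}$, where by Proposition \ref{prop_4: structure of I_+(-Y-hat,K-hat)} every summand is a copy of $I^{\sharp}(-S^3)$. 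For the remaining $2g(S)-1$ gradings I would run the five-lemma argument of Proposition \ref{prop_4: -Y_n-hat and -Y_n+1-hat}, comparing the $-n$ and $-(n+1)$ pictures along the commutative diagram \eqref{eq_3: commutative diagram}, to upgrade the dimension count to an actual identification of maps. The constant $N$ is pinned down once, for instance on the model case $K$ unknot, or any $K$ with $n>2g(K)$ (so that there are no bad gradings), where both decompositions are computable directly.

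\textbf{Main obstacle.} The real difficulty is the compatibility of closures in the matching step: $I^{\sharp}(W_{-n},t_i)$ is defined via a surface in a closure of the four-manifold $W_{-n}$, whereas the $\mathcal{I}_+$-grading lives on closures of the three-dimensional balanced sutured manifolds in the surgery triangle, with the identification $F_n$ of Proposition \ref{prop: I sharp and SHI}---itself assembled from cobordism and bypass maps---interposed between them. Carrying out these choices simultaneously and proving the required relative adjunction statement, so that $\mu(\bar{S})$ on the incoming end and the four-dimensional surface class agree exactly rather than up to an a priori unknown reindexing, is where essentially all the work lies. It is also where one must take care that the decomposition of $I^{\sharp}(-S^3_{-n}(K))$---not presently known to be canonical in general, cf. Remark \ref{rem: defect}---is used only for the canonical dual knot $\widehat{K}$, for which $\mathcal{I}_+$ is unambiguous. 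A secondary point is that a cobordism map need not split along an arbitrary direct-sum decomposition; here the target $\mathcal{I}_+(-S^3_{1/0}(K),K)=I^{\sharp}(-S^3)$ is a single summand, so the content is precisely that $I^{\sharp}(W_{-n})\circ\pi_{N-i}=I^{\sharp}(W_{-n},t_i)$, where $\pi_{N-i}$ is the projection onto the $(N-i)$-summand, which the adjunction argument above is designed to establish.
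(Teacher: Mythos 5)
This statement is one of the two conjectures stated at the end of Section \ref{sec: future directions}; the paper offers no proof of it, so there is no argument of ours to compare yours against, and your text should be judged as a research plan rather than a proof. As a plan it is sensible and essentially the route one would try, but it contains a genuine gap which you yourself flag: the entire content of the conjecture is the matching step, and nothing in your outline actually establishes it. The summand $I^{\sharp}(W_{-n},t_i)$ is defined by the evaluation of a second-homology class of the closed four-manifold obtained by closing up $W_{-n}$, while the grading cutting out $\mathcal{I}_+(-S^3_{-n}(K),\widehat{K},j)$ is the generalized eigenspace of $\mu(\bar{S})$ for a closed surface in a closure of a three-dimensional sutured manifold (Theorem \ref{thm_2: grading on SHI}), and the identification between $I^{\sharp}(-S^3_{-n}(K))$ and $\mathcal{I}_+$ is the map $F_n$ of Lemma \ref{lem_4: surgery triangle for knots} and Proposition \ref{prop_4: F_n is an isomorphism}, which is itself a cobordism map associated to an interior surgery, not the identity on any common closure. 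To make the adjunction argument you invoke, one must build a single four-dimensional cobordism of closures through which the capped-off Seifert class, the class $\bar{S}$ on the incoming end, and the surgery cobordisms defining $F_n$ are simultaneously tracked; asserting that a careful choice of closures ``forces'' the eigenvalue to match the value of $t_i$ up to a fixed shift $N$ is precisely the unproved relative adjunction statement, not a consequence of anything currently in the paper.

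Two smaller points. First, the five-lemma argument of Proposition \ref{prop_4: -Y_n-hat and -Y_n+1-hat} produces an isomorphism of vector spaces grading by grading, but it does not identify the restriction of the cobordism map with a bypass composition on the $2g(S)-1$ remaining gradings; upgrading a dimension count to an identity of maps needs the commutativity of $I^{\sharp}(W_{-n})$ with the bypass maps on the nose (up to unit), which again is not available without the closure-compatibility statement above. Second, pinning down $N$ on a model case presupposes the conjecture is already known there, which is fine for the unknot but does not by itself propagate to general $K$ unless the whole diagram you describe is shown to be natural in $K$; given Remark \ref{rem: defect}, that naturality is exactly the kind of statement that is currently open. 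So the proposal is a reasonable blueprint, but the conjecture remains open after it.
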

\bibliographystyle{abbrv}

\end{document}